\newcommand{\Z}{\mathbb{Z}}
\newcommand{\R}{\mathbb{R}}
\newcommand{\C}{\mathbb{C}}
\newcommand{\Co}{\mathcal{C}}
\newcommand{\A}{\mathcal{A}}
\newcommand{\X}{\mathcal{X}}
\newcommand{\Y}{\mathcal{Y}}
\newcommand{\E}{\mathcal{E}}
\newcommand{\ess}{\textnormal{ess}}
\crefname{hypothesis}{Hypothesis}{Hypotheses}
\title{Asymptotic behavior of age-structured and delayed Lotka-Volterra models\thanks{Submitted to the editors DATE.}}
\author{Antoine Perasso\thanks{UMR 6249 Chrono-Environnement,
 Universit\'e Bourgogne Franche-Comt\'e, Besançon 25000, France. 
  (\email{antoine.perasso@univ-fcomte.fr}).}
\and Quentin Richard \thanks{UMR 5251 Institut de Mathématiques de Bordeaux, Universit\'e de Bordeaux, Talence 33400, France.
  (\email{quentin.richard@math.cnrs.fr}).}}
\begin{document}

\maketitle

% REQUIRED
\begin{abstract}
In this work we investigate some asymptotic properties of an age-structured Lotka-Volterra model, where a specific choice of the functional parameters allows us to formulate it as a delayed problem, for which we prove the existence of a unique coexistence equilibrium and characterize the existence of a periodic solution. We also exhibit a Lyapunov functional that enables us to reduce the attractive set to either the nontrivial equilibrium or to a periodic solution. We then prove the asymptotic stability of the nontrivial equilibrium where, depending on the existence of the periodic trajectory, we make explicit the basin of attraction of the equilibrium. Finally, we prove that these results can be extended to the initial PDE problem.
\end{abstract}

% REQUIRED
\begin{keywords}
Lotka-Volterra equations, age-structured population, time delay, asymptotic stability, Lyapunov functional, global attractiveness, periodic solutions.
\end{keywords}

% REQUIRED
\begin{AMS}
34D23, 34K20, 35B40, 92D25
\end{AMS}

\section{Introduction}\label{Sec:Intro}

Mathematical models describing the relationships between a predator and its prey are, since Lotka \cite{Lotka25} and Volterra \cite{Volterra26}, still a wide subject of study in population dynamics. Half a century later, Gurtin and Levine considered in \cite{Gurtin79} a model where the dynamics depend on the age of the interacting species. As introduced by Sharpe and Lotka in \cite{SharpeLotka11} and by McKendrick in \cite{McKendrick25}, structuring individuals according to a continuous age variable leads to the formulation of a linear PDE of transport type. Such models have been extensively studied by many researchers (see e.g. the books of Webb \cite{Webb85}, Iannelli \cite{Iannelli94}, Magal and Ruan \cite{Magal2008}, Inaba \cite{Inaba2017}).
Concerning the specific case of structured predator-prey models, one can see \cite{PerassoRichard19} for references. In this paper, we consider the following age-structured predator-prey system
\begin{equation}
\left\{
\begin{array}{rcl}
\partial_t x(t,a)+\partial_a x(t,a)&=&-\mu(a)x(t,a)- \gamma(a)y(t)x(t,a), \\
y'(t)&=& \alpha y(t) \int_0^{\infty}\gamma(a)x(t,a)\mathrm{d}a  -\delta y(t),\\
x(t,0)&=&\int_0^{\infty} \beta(a) x(t,a)da, \\
x(0,\cdot)&=&x_0(\cdot), \quad y(0)=y_0,
\end{array}
\right.
\label{Eq:PDEmodel}
\end{equation}
for every $t>0$ and $a>0$ with
$$(x_0,y_0)\in \Y_+:=L^1_+(\R_+)\times \R_+$$
where $x(t,a)$ and $y(t)$ respectively denote the density of prey at age $a$ and time $t$, and the density of predators at time $t$. Moreover, $\alpha\in (0,1)$ and $\delta>0$ are constant parameters that respectively denote the assimilation coefficient of ingested prey and the basic mortality rate of the predator. Finally $\mu, \gamma$ and $\beta \in L^\infty_+(\R_+)$ are nonnegative and age-dependent functions that represent the basic mortality rate, the predation rate and the birth rate of the prey. This model has already been analyzed in \cite{PerassoRichard19} by rewriting it as a Cauchy problem and using semigroup theory (see \cite{EngelNagel2000}, \cite{Webb85}). In \cite{PerassoRichard19}, we enlightened the existence of two thresholds:
$$R_0=\int_0^{\infty} \beta(a)e^{-\int_0^a \mu(s)\mathrm{d}s}\mathrm{d}a, \qquad R_-=\int_0^{a_1} \beta(a)e^{-\int_0^a \mu(s)\mathrm{d}s}\mathrm{d}a,$$
with
$$a_1=\sup\{a\geq 0 : |\text{supp}(\gamma)\cap (0,a)|=0\}<\infty$$
that enables the solutions to go extinct when $R_0<1$ and to explode when $R_->1$ (with initial conditions in some subspace of $\Y_+$). One can note that the age $a_1$ also corresponds to the minimum of the essential support of $\gamma$, that is the closed subset $S_\text{ess}(\gamma)\subset X$ given by
$$
S_\text{ess}(\gamma) := \bigcap \, \{F \text{ closed subset of } X, \, \gamma=0 \text{ a.e. on }  X\backslash F\}
$$ 
When
$$R_0>1 \quad \text{ and } \quad R_-<1,$$
numerical simulations suggest the possibility for the solutions to converge either to a periodic function, or to a nontrivial equilibrium denoted by $E_2$. The goal of the present paper is to prove the latter convergence in the particular case
\begin{equation}\label{Eq:PartCase}
\mu \equiv \mu_0, \quad \beta(a)=\beta_0 \mathbf{1}_{[\tau,\infty)}(a), \quad \gamma(a)=\gamma_0 \mathbf{1}_{[\tau,\infty)}(a)
\end{equation}
where $\mu_0, \beta_0, \gamma_0, \tau>0$ are some positive constants. In other words, we suppose the presence of a juvenile class that cannot be hunted. We can easily calculate
$$a_1=\tau, \qquad R_-=0, \qquad R_0=\dfrac{\beta_0 e^{-\mu_0 \tau}}{\mu_0},$$
and we suppose in the following that
\begin{equation}\label{Eq:R0>1}
R_0>1.
\end{equation}
Formal integrations of \eqref{Eq:PDEmodel} lead to
\begin{equation*}
\left\{
\begin{array}{lcl}
X'(t)&=&x(t,\tau)-\mu_0 X(t)-\gamma_0 y(t)X(t), \\
Z'(t)&=&x(t,0)-x(t,\tau)-\mu_0 Z(t), \\
y'(t)&=&\alpha \gamma_0 X(t)y(t)-\delta y(t),
\end{array}
\right.
\end{equation*}
for every $t\geq 0$, where
$$X(t)=\int_\tau^\infty x(t,a)da \quad \text{and} \quad Z(t)=\int_0^\tau x(t,a)da$$
are respectively the total quantity of prey older (resp. younger) than $\tau$ at time $t$. Using the boundary condition we get
$$x(t,0)=\beta_0 X(t)$$
for every $t\geq 0$ and
\begin{equation*}
x(t,\tau)=\begin{cases}
\beta_0 e^{-\mu_0 \tau}X(t-\tau) &\text{if } t\in[\tau,\infty), \\
\beta_0 e^{-\mu_0 t}x_0(\tau-t) &\text{if } t\in[0,\tau)
\end{cases}
\end{equation*}
so that we get the following delayed differential system
\begin{equation}\label{Eq:Delay}
\left\{
\begin{array}{rcl}
X'(t)&=& \beta_0 e^{-\mu_0 \tau} X(t-\tau)-\mu_0 X(t)-\gamma_0 X(t)y(t), \\
Z'(t)&=&\beta_0 X(t)-\beta_0 e^{-\mu_0 \tau}X(t-\tau)-\mu_0 Z(t), \\
y'(t)&=& \alpha \gamma_0 X(t)y(t) -\delta y(t)
\end{array}
\right.
\end{equation}
for any $t\geq \tau$. We note that the equivalence between \eqref{Eq:PDEmodel} and \eqref{Eq:Delay} is true only if the delayed differential system \eqref{Eq:Delay} is equipped with the initial condition:
$$X(\theta)=\phi(\theta), \qquad Z(\theta)=\psi(\theta), \qquad y(0)=y_0$$
for every $\theta\in[0,\tau]$, where $\phi$ and $\psi$ are solutions of the following ODE for any $t\in[0,\tau]$:
\begin{equation*}
\left\{
\begin{array}{rcl}
\phi'(t)&=&\beta_0 e^{-\mu_0 t} x_0(\tau-t)-\mu_0 \phi(t)-\gamma_0 \phi(t)y(t), \\
\psi'(t)&=&\beta_0 \phi(t) -\beta_0 e^{-\mu_0 t}x_0(\tau-t)-\mu_0 \psi(t), \\
y'(t)&=&\alpha \gamma_0 \phi(t)y(t)-\delta y(t), \\
\phi(0)&=&\int_\tau^\infty x_0(a)da, \quad \psi(0)=\int_0^\tau x_0(a)da, \quad y(0)=y_0.
\end{array}
\right.
\end{equation*}
Since we can solve $X$ and $y$ independently of $Z$ in \eqref{Eq:Delay}, we will focus in the following on the delayed Lotka-Volterra system:
\begin{equation}\label{Eq:System}
\left\{
\begin{array}{rcl}
X'(t)&=& \beta_0 e^{-\mu_0 \tau} X(t-\tau)-\mu_0 X(t)-\gamma_0 X(t)y(t), \\
y'(t)&=& \alpha \gamma_0 X(t)y(t) -\delta y(t).
\end{array}
\right.
\end{equation}
for any $t\geq \tau$, and we will consider the more general case by taking an arbitrary initial condition $(\phi,y_\tau)\in\Co([0,\tau],\R_+)\times \R_+$, \textit{i.e.} such that
$$X(\theta)=\phi(\theta), \ \forall \theta\in[0,\tau], \qquad y(\tau)=y_\tau.$$
Note that the more general case where
$$\mu\in L^\infty(0,\infty), \qquad {\mu}_{\mid [\tau, \infty)}\equiv \mu_0, \quad \mu_0>0$$
could be easily extended to obtain a similar delay differential system as \eqref{Eq:System}. In the latter model, the delay can be seen as some latency for the prey to reproduce. Concerning Lotka-Volterra equations, delay was first introduced by May \cite{May73b} in a vegetation-herbivore-carnivore context, to model the time for the vegetation to recover. Thereafter, many authors studied similar delayed models (see some references in the general books of Cushing \cite{Cushing77}, Kuang \cite{Kuang93}, Arino et al \cite{Arino2007} and Smith \cite{Smith2010}). Some of the papers concern the global stability of equilibria (see e.g. \cite{Beretta88}, \cite{BerettaKuang96}, \cite{He98}, \cite{Saito99}, \cite{Saito2001}, \cite{Muroya2003}, \cite{ShiChen2017}).

However, in the papers mentioned above, a carrying capacity is present in the prey equation, meaning that prey grow logistically instead of exponentially. A consequence of this assumption is that, in absence of delay, the nontrivial equilibrium is asymptotically stable for some range of parameters. Adding some delay can then destabilize the equilibrium and make periodic solutions appear from a Hopf bifurcation (see e.g. \cite{YanLi2006}, \cite{Peng2017} and also \cite{Faria2001}, \cite{Yan2017} when adding some diffusion).

In our case, when the delay is equal to zero, \eqref{Eq:System} becomes the classical ODE Lotka Volterra model, so the coexistence equilibrium is only stable but not asymptotically stable. We show that, contrarily to the other papers, adding some delay in the reproduction term of the prey do not destabilize the coexistence equilibrium but make it become asymptotically stable, under technical assumptions.

The method used to prove this convergence is based on the existence of a Lyapunov function (see \cite{Gurel68} or more recently \cite{Hsu2005} for surveys of such functions in various ecological ODE and reaction-diffusion models). When dealing with global stability of positive equilibria, many suitable Lyapunov functionals are defined using the following key function:
\begin{equation}\label{Eq:G}
\begin{array}{rcl}
g:(0,\infty)&\longrightarrow &\R \\
x&\longmapsto& x-\ln(x)-1.
\end{array}
\end{equation}
The latter has been first used by Goh \cite{Goh77} in a context of a multi-species ODE Lotka-Volterra model. Hsu established similar Lyapunov functions in \cite{Hsu78} for models with more general functional responses. One may also see \cite{Goh79} for a model of mutualism.

For the present model, one shall also use the following Volterra-type Lyapunov functional that incorporates the delay term:
\begin{equation*}
\begin{array}{rcl}
V:\Co([0,\tau])&\longrightarrow &\R \\
\phi &\longmapsto& \displaystyle \int_{0}^\tau g\left(\dfrac{\phi(s)}{X^*}\right)ds
\end{array}
\end{equation*}
where $(X^*,y^*)$ is the nontrivial equilibrium. The latter was first introduced the same year in \cite{HuangTakeuchi2010}, \cite{LiShu2010}, 
\cite{McCluskey2010}, \cite{McCluskey2010b} for epidemiological models (see also \cite{MagCluskWebb2010}, \cite{Perasso2019} and the references therein for similar functional in structured populations PDE models). Concerning Lotka-Volterra models, a few papers used this functional: \cite{Vargas2012}, \cite{Vargas2015b}, \cite{Vargas2015}  and \cite{HuangLiu2016}.

In contrast to the papers mentioned previously, in our case the attractive sets are not reduced to the equilibrium, but are given by a set of periodic solutions, where the period is exactly equal to the delay. Consequently, one can \textit{a priori} only state the convergence to either the equilibrium or to an eventual $\tau$-periodic solution. Using properties on the period of the solutions of the classical Lotka-Volterra ODE model, we show that a necessary and sufficient condition to get such periodic solution is the following:
\begin{equation}\label{Eq:Hyp_Period}
\dfrac{\tau \sqrt{\delta y^* \gamma_0}}{2 \pi}>1.
\end{equation}
When \eqref{Eq:Hyp_Period} is not satisfied, then the global asymptotic stability of the nontrivial equilibrium is proved for `positive' initial conditions. In the case where \eqref{Eq:Hyp_Period} holds, we exhibit an attractive set in which the equilibrium is globally asymptotically stable.

The paper is structured as follows: in Section \ref{Sec:Prel}, we state the framework used in the following and we decompose the space of initial conditions into invariant spaces. In Section \ref{Sec:Attrac}, we exhibit a Lyapunov function and we prove an asymptotic stability result for the nontrivial equilibrium when \eqref{Eq:Hyp_Period} does not hold. In the case where \eqref{Eq:Hyp_Period} holds, even if the existence of a periodic solution is ensured, we prove that this latter is unattractive and the asymptotic stability of the nontrivial equilibrium in a suitable basin of attraction defined from the Lyapunov function. The two cases are enlightened by numerical simulations. Finally, in Section \ref{Sec:PDE}, we deduce asymptotic results for the initial PDE problem \eqref{Eq:PDEmodel}.

\section{Preliminaries}\label{Sec:Prel}

\subsection{Framework and definitions}\label{Sec:Framework}

Let the Banach space
$$\X=\Co([0,\tau],\R)\times \R$$
endowed with the norm
$$\|(u,v)\|_{\X}=\|u\|_\infty+|v|$$
and let $\X_+$ be its nonnegative cone. We study \eqref{Eq:System} with the initial condition
\begin{equation*}
X(\theta)=\phi(\theta), \ \forall \theta \in [0,\tau], \quad y(\tau)=y_\tau,
\end{equation*}
where $(\phi,y_\tau)\in \X$. The equilibria of \eqref{Eq:System} are given by
$$E_0:=(0,0) \in \R^2; \qquad E^*:=(X^*,y^*)=\left(\dfrac{\delta}{\alpha \gamma_0}, \dfrac{\beta_0 e^{-\mu_0 \tau}-\mu_0}{\gamma_0}\right)\in\R^2.$$
We verify that $E^*$ exists (in the positive orthant) if and only if \eqref{Eq:R0>1} holds, and the nontrivial equilibrium is unique under this latter condition.

The initial-value problem \eqref{Eq:System} can be written as the following abstract Cauchy problem:
\begin{equation}
\label{Eq:Sys}
\left\{
\begin{array}{rcl}
\begin{pmatrix}
X \\
y
\end{pmatrix}'(t)&=&f(X_t,y(t)), \ \forall t \geq \tau \\
X_\tau&=&\phi, \qquad y(\tau)=y_\tau
\end{array}
\right.
\end{equation}
where $(\phi,y_\tau)\in \X$ and $f:\X\to \R^2$ is defined by
\begin{equation*}
f(\phi,y)=\begin{pmatrix}
\beta_0 e^{-\mu_0 \tau}\phi(0)-\mu_0 \phi(\tau)-\gamma_0 \phi(\tau)y\\
\alpha \gamma_0 \phi(\tau)y -\delta y
\end{pmatrix},
\end{equation*}
and where
$$X_t(\theta):=X(t+\theta-\tau), \ \forall\theta \in [0,\tau]$$
(so that $X_t(0)=X(t-\tau)$, $X_t(\tau)=X(t)$ and $X_\tau(\theta)=X(\theta)$ for any $\theta\in[0,\tau]$). We omit the initial condition dependence since there is no misunderstanding, so we write $X_t(\theta)$ instead of $X_t(\theta,z)$, where $z:=(\phi,y_0)$.
We start by giving an existence and uniqueness result.
\begin{proposition}\label{Prop:Semiflow}
For every initial condition $z:=(\phi,y_\tau)\in \X_+$, Problem \eqref{Eq:Sys} has a unique mild solution $(X_t,y(t))$ for every $t\geq \tau$. Moreover, Problem \eqref{Eq:Sys} induces a continuous semiflow via:
\begin{equation*}
\begin{array}{rcl}
\Phi:[\tau,\infty)\times \X_+ &\to& \X_+ \\
(t,z)&\mapsto& \Phi_t(z):=(X_t,y(t)).
\end{array}
\end{equation*}
\end{proposition}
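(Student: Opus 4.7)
The plan is to treat \eqref{Eq:Sys} by the classical method of steps, which reduces the delay problem to a sequence of ODE Cauchy problems, combined with the invariance of $\X_+$ by inspection of the signs of the right-hand side. The nonlinearity $f:\X\to\R^2$ is continuous and locally Lipschitz on bounded sets of $\X$, since all its terms are either linear in $\phi(0)$ and $\phi(\tau)$ or a product of $\phi(\tau)$ with $y$; hence on any interval $[\tau,\tau+h]$ with $h\le\tau$, the term $X(t-\tau)=\phi(t-\tau)$ is a known continuous function and the system becomes an ODE in $(X,y)$ with a locally Lipschitz vector field, for which Cauchy-Lipschitz yields a unique local solution.

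Next I would prove forward invariance of the nonnegative cone. Along a solution, the $y$-equation reads $y'(t)=(\alpha\gamma_0 X(t)-\delta)y(t)$, so the variation-of-constants formula gives $y(t)=y_\tau\exp\bigl(\int_\tau^t(\alpha\gamma_0 X(s)-\delta)\,ds\bigr)\ge 0$ as soon as $y_\tau\ge 0$. Similarly, rewriting the $X$-equation as
\begin{equation*}
X'(t)+\bigl(\mu_0+\gamma_0 y(t)\bigr)X(t)=\beta_0 e^{-\mu_0\tau}X(t-\tau),
\end{equation*}
the integrating factor $e^{\int_\tau^t(\mu_0+\gamma_0 y(s))\,ds}$ gives an explicit expression of $X(t)$ as the sum of a nonnegative exponential of $X(\tau)=\phi(\tau)$ and a convolution integral of the nonnegative quantity $X(\cdot-\tau)$; both are nonnegative when $\phi\ge 0$. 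Iterating the method of steps over $[\tau+(n-1)h,\tau+nh]$, with $h=\tau$ for convenience, propagates nonnegativity on every interval.

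Global existence will follow from an a priori bound on each interval $[n\tau,(n+1)\tau]$. Dropping the negative terms in the $X$-equation gives $X'(t)\le\beta_0 e^{-\mu_0\tau}\,\|X_t(0)\|$; since $X_t(0)=X(t-\tau)$ is controlled by its sup over the preceding interval, a Gronwall-type induction yields $\sup_{[n\tau,(n+1)\tau]}X<\infty$ from $\sup_{[(n-1)\tau,n\tau]}X<\infty$. The bound on $X$ then plugged into $y'(t)\le\alpha\gamma_0 X(t)\,y(t)$ gives, again by Gronwall, a finite bound on $y$ on the same interval. So the maximal solution is global, which gives the mild solution $(X_t,y(t))$ for all $t\ge\tau$, and uniqueness is inherited from Cauchy-Lipschitz on each step.

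Finally, the semiflow property $\Phi_{t+s}(z)=\Phi_t(\Phi_s(z))$ is an immediate consequence of uniqueness, and the joint continuity of $(t,z)\mapsto\Phi_t(z)$ reduces, on each step, to continuous dependence on data for the ODE at that step, propagated across steps by Gronwall applied to the difference of two solutions; the norm on $\X$ being $\|(u,v)\|_\X=\|u\|_\infty+|v|$ is exactly the one for which this estimate closes. The only mildly delicate point is the non-standard indexing $X_t(\theta)=X(t+\theta-\tau)$, which one must keep track of so that the time-$\tau$ slice of the history agrees with $\phi$; this is the ingredient that makes the method of steps start cleanly at $t=\tau$ rather than $t=0$, but it is not a genuine obstacle. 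Everything else is classical delay-equation machinery, so I expect no substantial difficulty beyond organizing the induction on steps together with the cone-invariance computation.
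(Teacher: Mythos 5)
Your proposal is correct, but it takes a genuinely different route from the paper: the paper disposes of this proposition in one line by invoking the general well-posedness result of the authors' earlier work (\cite[Proposition 3.2]{PerassoRichard19}), established there in the semigroup/abstract Cauchy problem framework for the age-structured model, whereas you give a self-contained proof by the method of steps. Your argument is the standard one for delay differential equations and it is sound: on each window $[n\tau,(n+1)\tau]$ the delayed term is a known continuous function, Cauchy--Lipschitz applies to the resulting planar ODE, the integrating-factor/variation-of-constants representations give invariance of the cone (note that on $[n\tau,(n+1)\tau]$ the delayed term depends only on the previous window, so your a priori bound on $X$ does not even need Gronwall --- a direct integration suffices, with Gronwall then needed only for $y$), and uniqueness yields the semiflow identity while continuous dependence propagated across steps yields continuity in the $\X$-norm. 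You correctly flag the only notational trap, namely the indexing $X_t(\theta)=X(t+\theta-\tau)$ so that $X_\tau=\phi$. What the paper's citation buys is brevity and coherence with the PDE framework of Section \ref{Sec:PDE} (the delay system is obtained by integrating the PDE, so well-posedness is inherited); what your approach buys is an elementary, fully explicit argument that does not rely on the equivalence with the PDE and exhibits positivity and boundedness constructively. Either is acceptable; only minor polishing (e.g. stating joint continuity in $(t,z)$ explicitly, and writing $X(t-\tau)$ rather than $\|X_t(0)\|$) would be needed to make yours publication-ready.
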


\begin{proof}
The proposition results from the general case \cite[Proposition 3.2]{PerassoRichard19}.
\end{proof}

\begin{remark}\label{Remark:Explosion}
Consequently of the latter proposition, the solution remains in the nonnegative cone and there is no explosion in finite time.
\end{remark}
In what follows, we shall use the notations:
$$\overline{E_0}:=(0,0)\in \X, \qquad \overline{E^*}:=(X^*\mathbf{1}_{[0,\tau]},y^*)\in \X.$$

One of the goal of this article is to investigate some stability and attractiveness properties of $\overline{E^*}$. We therefore remind the following definitions:
\begin{definition}
Let $S\subset \X$ be a subset of $\X$. We say that $E^*$ is
\begin{itemize}
\item \textbf{(Lyapunov) stable} if for every $\varepsilon>0,$ there exists $\eta>0$ such that
$$\|z-\overline{E^*}\|_\X \leq \eta \quad \Rightarrow \quad \|\Phi_t(z)-\overline{E^*}\|_\X\leq \varepsilon, \quad \forall t\geq 0;$$
\item \textbf{locally attractive in $S$} if there exists $\eta>0$ such that for every $z\in S$ satisfying $\|z-\overline{E^*}\|_{\X}\leq \eta$, then
\begin{equation}\label{Eq:LocAttrac}\lim_{t\to \infty}\|\Phi_t(z)-\overline{E^*}\|_{\X}=0,
\end{equation}
\textit{i.e.}
$$\lim_{t\to \infty}y(t)=y^*, \qquad \lim_{t\to \infty}X(t)=X^*;$$
\item \textbf{locally asymptotically stable in $S$} if $E^*$ is stable and locally attractive in $S$;
\item \textbf{globally attractive in $S$} if for every $z\in S$, \eqref{Eq:LocAttrac} is satisfied;
\item \textbf{globally asymptotically stable in $S$} if $E^*$ is stable and globally attractive in $S$.
\end{itemize}
\end{definition}

\subsection{Partition of $\X_+$}\label{Sec:Partition}

Consider the sets
$$ S_0=\{(\phi,y)\in \X_+ : \int_{0}^\tau\phi(a)\mathrm{d}a>0\}, \quad \partial S_0=\X_+\setminus S_0, $$
$$ S_1=\{(\phi,y)\in \X_+ : \phi(a)>0, \ \forall a\in[0,\tau] \}, $$
$$ S_2=\{(\phi,y)\in \X_+ : y>0, \int_{0}^\tau \phi(a)\mathrm{d}a>0\}, \quad \partial S_2=\X_+ \setminus S_2,$$
$$ S_3=\{(\phi,y)\in \X_+ : y>0, \phi(a)>0, \ \forall a\in[0,\tau]\}.$$
In order to make an analogy with the PDE model \eqref{Eq:PDEmodel}, note that for any $a\in[0,\tau]$:
$$\phi(a)=X(a)=\int_\tau^\infty x(a,s)ds=e^{-\mu_0 a}\int_\tau^\infty x_0(s-a)e^{-\gamma_0 \int_\tau^s y(a+\tau-u)du}ds$$
so that
$$c e^{-\mu_0 a}\int_{\tau-a}^\infty x_0(s)ds\leq \phi(a)\leq e^{-\mu_0 a}\int_{\tau-a}^\infty x_0(s)ds$$
for some constant $c>0$. Consequently $\phi(a)>0$ if and only if $\int_{\tau-a}^\infty x_0(s)ds>0$ so that
\begin{equation*}
\left\{
\begin{array}{lcl}
&\phi(a)>0, \ \forall a\in[0,\tau] &\Longleftrightarrow \int_\tau^\infty x_0(s)ds>0 \vspace{0.1cm} \\
&\int_0^\tau \phi(a)da>0 &\Longleftrightarrow \int_0^\infty x_0(s)ds>0,
\end{array}
\right.
\end{equation*}
Hence, $S_0$ is the set where there is initially a nonzero total number of preys, while $S_1$ consists of initial conditions with preys older than $\tau$. Similarly $S_2$ and $S_3$ have respectively the same meaning as $S_0$ and $S_1$, but with initially a positive quantity of predators.

\begin{remark}\label{Rem:Partition}
We have the inclusions
$$ S_3\subset S_2 \subset S_0, \quad S_3\subset S_1 \subset S_0$$
and we get the partition
$$\X_+ = S_2 \sqcup (\partial S_2 \cap S_0) \sqcup (\partial S_2 \cap \partial S_0)$$
(disjoint unions) that is actually
$$\X_+ = S_2 \sqcup (\partial S_2 \cap S_0) \sqcup \partial S_0$$
since $\partial S_0 \subset \partial S_2$.
\end{remark}

\subsection{Invariant sets}

We start by reminding some definitions.
\begin{definition}\label{Def:Orbit}
Denote by $\mathcal{O}_z=\{\Phi_t(z),t\geq \tau\}$ the orbit starting from $z\in \X_+$ and
$$ \omega(z)=\underset{s \geq \tau}{\cap}\overline{\{\Phi_t(z), t\geq s\}}$$
the $\omega-\text{limit}$ set of $z$.
\end{definition}

\begin{definition}
Let $S, T\subset \X$ and $s\geq 0$, then in all the following we will say that $S$ is
\begin{enumerate}
\item \textbf{positively invariant} if $\Phi_t(S)\subset S$ for every $t\geq \tau$, \textit{i.e.} for every $z\in S$ and every $t\geq \tau$, $\Phi_t(z)\in S$;
\item $(s,T)$-\textbf{positively invariant} if for every $z\in S$, then $\Phi_t(z)\in T$ for every $t\geq s+\tau$.
\end{enumerate}
\end{definition}

\begin{remark}
In all the following, we will denote by $(X_t,y(t))\in \X$ the solution of \eqref{Eq:Sys} at time $t\geq \tau$ with initial condition $(\phi,y_\tau)\in \X$.
\end{remark}
We now give some properties about the sets defined in Section \ref{Sec:Partition}, with first a useful lemma.

\begin{lemma}\label{Lemma:Positivity}
Let $(\phi,y_\tau)\in \X_+$ be a nonnegative initial condition. If there exists $t^*\in [0,\tau]$ such that $\phi(t^*)>0$ then $X(t^*+\tau)>0$.
\end{lemma}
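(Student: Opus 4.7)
The plan is to exploit the fact that on the first post-initial window $t \in [\tau, 2\tau]$, the delayed term in \eqref{Eq:System} reduces to $X(t-\tau) = \phi(t-\tau)$, which is an explicitly prescribed function of $t$. The first equation then degenerates into a scalar linear non-autonomous ODE
\[
X'(t) + a(t) X(t) = \beta_0 e^{-\mu_0\tau}\phi(t-\tau), \qquad X(\tau) = \phi(\tau),
\]
where $a(t) := \mu_0 + \gamma_0 y(t)$ is continuous and nonnegative on $[\tau, 2\tau]$ by Proposition~\ref{Prop:Semiflow}.

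I would then apply the variation-of-constants formula and perform the substitution $v = u - \tau$ in the inhomogeneous term, producing the explicit representation
\[
X(t^*+\tau) = e^{-\int_\tau^{t^*+\tau} a(s)\,ds}\,\phi(\tau) \;+\; \beta_0 e^{-\mu_0 \tau}\int_0^{t^*} e^{-\int_{v+\tau}^{t^*+\tau} a(s)\,ds}\,\phi(v)\,dv.
\]
Both summands are nonnegative because $\phi \in \mathcal{X}_+$ and the exponential weights are strictly positive. This formula is the workhorse of the argument and reduces the claim to a question about the support of $\phi$.

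To extract strict positivity, I would invoke continuity of $\phi$ at $t^*$: since $\phi(t^*) > 0$, there exists $\varepsilon > 0$ such that $\phi > 0$ on a left neighborhood of $t^*$ of positive length that is contained in $[0, t^*]$ (using $t^* > 0$; the degenerate case $t^* = 0$ is trivial as then $\phi(\tau) = \phi(t^*) > 0$ would already be the initial value, or otherwise the statement amounts to a direct observation on $X(\tau)$). Consequently the integrand in the representation above is strictly positive on a set of positive Lebesgue measure, forcing the integral term, and hence $X(t^*+\tau)$, to be strictly positive.

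The main obstacle, if one can call it that, is purely bookkeeping: combining the variation-of-constants formula with the variable change correctly so that the continuity argument lands inside the domain of integration. Once the representation is in place, nonnegativity of every ingredient and strict positivity of the exponential kernel do the rest.
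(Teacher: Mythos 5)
Your argument is correct where it matters, but it takes a genuinely different route from the paper. The paper's proof is a one-line contradiction: if $X(t^*+\tau)=0$, the first equation of \eqref{Eq:System} gives $X'(t^*+\tau)=\beta_0 e^{-\mu_0\tau}\phi(t^*)>0$, which would force $X$ to be negative immediately to the left of $t^*+\tau$, contradicting the nonnegativity of the solution (Remark~\ref{Remark:Explosion}). You instead integrate the equation on $[\tau,2\tau]$, where the method of steps turns it into a linear non-autonomous ODE, and write down the Duhamel representation explicitly; positivity then falls out of the strict positivity of the exponential kernel plus continuity of $\phi$ near $t^*$. Your route is longer but constructive: it yields a quantitative lower bound on $X(t^*+\tau)$ and in fact shows $X(t)>0$ for every $t\in[t^*+\tau,2\tau]$, which is essentially the content that Proposition~\ref{Prop:Invariance}(2) has to re-derive afterwards via a separate Gr\"onwall-type estimate. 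One caveat: your dismissal of the case $t^*=0$ is garbled --- you write $\phi(\tau)=\phi(t^*)$, which is false when $t^*=0$ unless $\phi(0)=\phi(\tau)$; for $t^*=0$ the conclusion $X(\tau)=\phi(\tau)>0$ is purely a statement about the initial datum and can fail (take $\phi(\theta)=\tau-\theta$). This defect is shared by the lemma as stated (and silently by the paper's own proof, whose derivative argument only sees a right derivative at $t=\tau$); it is harmless in every application, since whenever $\int_0^\tau\phi>0$ one can choose $t^*>0$ by continuity, but you should either restrict to $t^*\in(0,\tau]$ or say so explicitly rather than claim the case is trivial.
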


\begin{proof}
By contradiction, suppose that $X(t^*+\tau)=0$ then Equation \eqref{Eq:System} implies
$$X'(t^*+\tau)=\beta_0 e^{-\mu_0 \tau}X(t^*)>0,$$
which contradicts the nonnegativity of $X$.
\end{proof}

\begin{proposition}\mbox{}\label{Prop:Invariance}
\begin{enumerate}
\item The sets $S_1$ and $S_3$ are positively invariant.
\item The set $S_0$ (resp $S_2$) is $(2\tau,S_1)$-positively invariant (resp $(2\tau,S_3)$). 
\item The set $\partial S_0$ is positively invariant and the equilibrium $\overline{E_0}$ is globally attractive in $\partial S_0$ .\item The set $\partial S_2$ is positively invariant. Moreover, if we take the restriction of $\Phi$ to the set $S_0 \cap \partial S_2$, then the solution $(X,y)$ of Problem \eqref{Eq:System} goes to $(\infty,0)$ when $t\to \infty$.
\end{enumerate}
\end{proposition}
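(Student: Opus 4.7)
The plan is to treat each item by exploiting the variation-of-constants representation for the $X$-equation in~\eqref{Eq:System} together with the exponential formula $y(t)=y_\tau\exp\!\bigl(\int_\tau^t(\alpha\gamma_0 X(s)-\delta)\,ds\bigr)$ for the $y$-equation.

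For item~1, writing for $t\geq\tau$
\[
X(t)=\phi(\tau)\,e^{-\int_\tau^t(\mu_0+\gamma_0 y(s))\,ds}+\beta_0 e^{-\mu_0\tau}\!\int_\tau^t X(s-\tau)\,e^{-\int_s^t(\mu_0+\gamma_0 y(u))\,du}\,ds,
\]
if $z\in S_1$ then $\phi(\tau)>0$ and $X(s-\tau)\geq 0$ give $X(t)>0$ for all $t\geq\tau$, hence $X_t(\theta)=X(t+\theta-\tau)>0$ for every $\theta\in[0,\tau]$; if $z\in S_3$ the strict positivity of $y$ is propagated by the exponential formula. For item~2, the hypothesis $z\in S_0$ forces $\phi\not\equiv 0$ on $[0,\tau]$, and the same formula evaluated at $t=2\tau$ yields, after the change of variable $r=s-\tau$,
\[
X(2\tau)\geq \beta_0 e^{-\mu_0\tau}\!\int_0^\tau\phi(r)\,e^{-\int_{r+\tau}^{2\tau}(\mu_0+\gamma_0 y(u))\,du}\,dr>0.
\]
Applying item~1 from time $2\tau$ onward then gives $X(s)>0$ for every $s\geq 2\tau$, which is precisely $\Phi_t(z)\in S_1$ for every $t\geq 3\tau=2\tau+\tau$; the $S_2\to S_3$ version is identical once one adds that $y_\tau>0\Rightarrow y>0$ on $[\tau,\infty)$.

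For item~3, the condition $z\in\partial S_0$ forces $\phi\equiv 0$ on $[0,\tau]$ by continuity and nonnegativity. An induction on $[k\tau,(k+1)\tau]$ based on the linear ODE $X'(t)=-(\mu_0+\gamma_0 y(t))X(t)$ (the delay term vanishes at step $k$ by the inductive hypothesis) then gives $X\equiv 0$ on $\R_+$, so $\partial S_0$ is positively invariant, and the residual equation $y'=-\delta y$ gives $y(t)\to 0$, i.e.\ global attractiveness of $\overline{E_0}$. For item~4, the decomposition $\partial S_2=\partial S_0\cup\{(\phi,y_\tau)\in\X_+:y_\tau=0\}$ combined with item~3 reduces positive invariance to the case $y_\tau=0$, in which the exponential formula immediately gives $y\equiv 0$.

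The genuine obstacle lies in the final claim, namely $(X,y)\to(\infty,0)$ on $S_0\cap\partial S_2$. There $y\equiv 0$ and the $X$-equation reduces to the linear delay equation
\[
X'(t)=\beta_0 e^{-\mu_0\tau}X(t-\tau)-\mu_0 X(t),
\]
whose characteristic equation $(\lambda+\mu_0)e^{\lambda\tau}=\beta_0 e^{-\mu_0\tau}$ admits, thanks to $R_0>1$, a unique positive real root $\lambda^*$. I plan to invoke the Perron--Frobenius-type theory for positive linear delay systems (see e.g.~\cite{Smith2010}): the nonnegativity and nontriviality of the initial datum (ensured by $z\in S_0$) force a strictly positive projection onto the eigenspace associated to $\lambda^*$, so that $X(t)\sim Ce^{\lambda^*t}$ with $C>0$ and $X(t)\to+\infty$. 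A self-contained alternative is to set $u(t)=X(t)e^{\mu_0 t}$, which satisfies $u'(t)=\beta_0 u(t-\tau)$, and iterate this relation on successive intervals of length $\tau$ to produce a lower bound $u(t)\geq c\,e^{\nu^* t}$ where $\nu^*$ is the positive root of $\nu=\beta_0 e^{-\nu\tau}$; since $R_0>1$ is equivalent to $\nu^*>\mu_0$, one concludes $X(t)=u(t)e^{-\mu_0 t}\to+\infty$.
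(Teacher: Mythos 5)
Your proposal is correct, and on two of the four items it takes a genuinely different route from the paper. For the positivity statements (items 1 and 2) the paper proceeds via an auxiliary contradiction lemma ($X(t^*+\tau)=0$ would force $X'(t^*+\tau)=\beta_0e^{-\mu_0\tau}X(t^*)>0$, impossible at a minimum of a nonnegative function), followed by the differential inequality $X'\geq-(\mu_0+\gamma_0 y)X$ to spread positivity over an interval; your variation-of-constants representation delivers both steps at once and is arguably cleaner. One small caution: when you say ``applying item 1 from time $2\tau$ onward'', note that the history $X_{2\tau}$ need not lie in $S_1$ (you only know $X(2\tau)>0$, not positivity on all of $[\tau,2\tau]$); the step is nevertheless valid because your item-1 computation only uses positivity of the right endpoint of the history plus nonnegativity of the rest, so the first Duhamel term $X(2\tau)e^{-\int_{2\tau}^t(\mu_0+\gamma_0 y)}$ already gives $X(t)>0$ for $t\geq 2\tau$ — it would be worth saying this explicitly. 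Item 3 and the invariance part of item 4 coincide with the paper's argument. For the divergence $X(t)\to\infty$ on $S_0\cap\partial S_2$, the paper invokes the renewal-theorem asymptotics $X(t)=c_0e^{\alpha^*t}(1+\Omega(t))$ from Iannelli--Milner; your first route (Perron--Frobenius for positive linear delay equations) is essentially the same citation-based argument, but your second route — the substitution $u(t)=X(t)e^{\mu_0 t}$ reducing to $u'(t)=\beta_0 u(t-\tau)$, then comparison with the subsolution $Ae^{\nu^*(t-t_0)}$ by the method of steps, using that $R_0>1$ is equivalent to $\nu^*>\mu_0$ — is a self-contained elementary alternative that buys independence from the renewal machinery at the cost of producing only a lower bound, which is all that the statement requires.
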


\begin{proof}\mbox{}
\begin{enumerate}
\item Consider an initial condition $(\phi,y_\tau)\in S_1$. Then $X_\tau=\phi$ and
$$X(t)=\phi(t)>0, \ \forall t\in [0,\tau].$$
Lemma \ref{Lemma:Positivity} implies that $X(t)>0$ for every $t\in[\tau,2\tau]$. Repeating this argument, we get
$$X(t)>0, \ \forall t\geq 0.$$ Consequently $S_1$ is positively invariant. 
We then easily see that $S_3$ is positively invariant since $y'(t)\geq -\delta y(t)$ for $t\geq \tau$ and
$$y(t)\geq y_\tau e^{-\delta (t-\tau)}>0,\quad \forall t\geq \tau$$
when $(\phi,y_\tau)\in S_3$.

\item Take an initial condition $(\phi, y_\tau)\in S_0$. We then have $\int_0^\tau \phi(a)\mathrm{d}a>0$ so there exists $t^* \in [0,\tau]$ such that
$$\phi(t^*)>0.$$
Using Lemma \ref{Lemma:Positivity}, we get $$X(t^*+\tau)>0.$$
Since we have
$$X'(t)\geq -(\mu_0+\gamma_0 y(t))X(t), \ \forall t\in[t^*+\tau,3\tau],$$
it follows that
$$X(t)\geq X(t^*+\tau)e^{-(\mu_0+\gamma_0 \overline{y})[t-(t^*+\tau)]}>0, \ \forall t\in [t^*+\tau,3\tau],$$
where
$$\overline{y}=\max_{t\in [t^*+\tau,3\tau]}y(t)<\infty$$
using Remark \ref{Remark:Explosion}. We then have
$$X(t)>0, \ \forall t\in [2\tau,3\tau],$$
and $(X_{3\tau},y(3\tau))\in S_1$. With the first point, we can see that $S_0$ (resp. $S_2$) are $(2\tau,S_1)$ (resp. $(2\tau, S_3)$)-positively invariant.

\item Consider an initial condition $(\phi,y_\tau)\in \partial S_0$. We have $\int_0^\tau \phi(a)\mathrm{d}a=0$ and
$$X(t)=0, \ \forall t\in[0,\tau],$$
which leads to
$$X'(t)=-\mu_0 X(t) -\gamma_0 X(t)y(t)\leq 0,\quad \forall t\in [\tau,2\tau],$$
so $X$ is nonincreasing on $[\tau,2\tau]$. Since $X$ is nonnegative, then
$$X(t)=0, \ \forall t \in [\tau,2\tau].$$ Repeating this argument, we get $X(t)=0$ for every $t\geq \tau$. We readily see that 
$$\int_0^\tau X_t(\theta)\mathrm{d}\theta=0$$
for every $t\geq \tau$ and $\partial S_0$ is positively invariant. Since 
$$X(t)=0, \ \forall t\geq \tau$$
and $y'(t)\leq -\delta y(t)$ for every $t\geq \tau$, it is then clear that 
$$\lim_{t\to \infty} y(t)=0,$$
whence the solution of \eqref{Eq:System} converge to $E_0$.

\item We know that $\partial S_2 \cap \partial S_0=\partial S_0$ is positively invariant. Considering an initial condition $(\phi, y_\tau)\in \partial S_2\cap S_0$ we get
$$y_\tau=0 \ \text{ and } \ \int_0^\tau \phi(a)\mathrm{d}a>0.$$
Then \eqref{Eq:System} implies that 
$$y(t)=0, \ \forall t\geq \tau.$$
Since $S_0$ is positively invariant, we get the invariance of $\partial S_2\cap S_0$ and $\partial S_2$. Moreover, with the second and third points, we have
$$\Phi_t(\phi,y_\tau)\in S_1\cap \partial S_2, \ \forall t\geq 3\tau,$$
whence
$$X(t)>0, \ \forall t\geq 2\tau.$$
We see that \eqref{Eq:System} becomes the delayed Malthusian equation
$$X'(t)=\beta_0 e^{-\mu_0 \tau}X(t-\tau)-\mu_0 X(t), \ \forall t\geq \tau.$$
Such class of equation has been studied in \cite[Sections 2.1 and 2.2]{Iannelli2014}, where the authors proved that the solution behaves as
$$X(t)=c_0 e^{\alpha^* t}(1+\Omega(t)), \qquad \lim_{t\to \infty}\Omega(t)=0,$$
where $c_0>0$ and $\alpha^*>0$ whenever \eqref{Eq:R0>1} holds.
Consequently we get
$$\lim_{t\to \infty}X(t)=\infty.$$
\end{enumerate}
\end{proof}

\begin{remark}
Consequently to Proposition \ref{Prop:Invariance}, 2), all the asymptotic results proved for initial conditions in $S_3$ can be extended to $S_2$.
\end{remark}

Note that the behavior of the solutions when considering an initial condition in $\partial S_2\cap S_0$ or $\partial S_0$ is clear. By means of Remark \ref{Rem:Partition} and the latter proposition, it remains to prove what happens when the initial condition is taken in $S_3$.

\section{Asymptotic behavior}\label{Sec:Attrac}

In this section, we deal with the asymptotic behavior of the solutions.

\subsection{Local asymptotic stability of $E^*$}

We start by handling the local stability of the nontrivial equilibrium. Linearising \eqref{Eq:System} around $E^*$ gives:
\begin{equation}\label{Eq:Linear}
\left\{
\begin{array}{rcl}
X'(t)&=&\beta_0 X(t-\tau)e^{-\mu_0 \tau}-\mu_0 X(t)-\gamma_0 X^*y(t)-\gamma_0 X(t)y^*, \\
y'(t)&=&\alpha \gamma_0 y^* X(t)
\end{array}
\right.
\end{equation}
that can be rewritten under the form
$$\begin{pmatrix}
X'(t) \\
y'(t)
\end{pmatrix}=A_1 \begin{pmatrix}
X(t) \\
y(t)
\end{pmatrix}+A_2 \begin{pmatrix}
X(t-\tau) \\
y(t-\tau)
\end{pmatrix}$$
with
$$A_1=\begin{pmatrix}
-\mu_0-\gamma_0 y^* & - \gamma_0 X^* \\
\alpha \gamma_0 y^* & 0
\end{pmatrix}, \qquad A_2=\begin{pmatrix}
\beta_0 e^{-\mu_0 \tau} & 0 \\
0 & 0
\end{pmatrix}.$$
The characteristic equation of \eqref{Eq:Linear} is classically given by (see \textit{e.g.} \cite{ACR2006} or \cite{Smith2010}):
$$\det\left(\lambda I-A_1 -A_2 e^{-\lambda \tau}\right)=0$$
which reduces (by definition of $(X^*,y^*)$ to
\begin{equation}\label{Eq:LocalStab}
\kappa_1(\lambda)+\kappa_2(\lambda)e^{-\lambda \tau}=0.\\
\end{equation}
where $\kappa_1$ and $\kappa_2$ are given by
\begin{equation}\label{Eq:kappa}
\left\{
\begin{array}{rcl}
\kappa_1(\lambda)&=&\lambda^2+\lambda \beta_0 e^{-\mu_0 \tau}+\delta \gamma_0 y^*, \\
\kappa_2(\lambda)&=&-\lambda \beta_0 e^{-\mu_0 \tau}.
\end{array}
\right.
\end{equation}

\begin{theorem}\label{Thm:LocStab}
Every solution of \eqref{Eq:LocalStab} has non positive real part. Moreover, \eqref{Eq:LocalStab} has two purely imaginary roots if and only if 
\begin{equation}\label{Eq:Cond_Imag}
\dfrac{\tau \sqrt{\delta y^* \gamma_0}}{2\pi}\in \Z
\end{equation}
holds. In this case, the roots are given by
\begin{equation}\label{Eq:Lambda}\lambda_\pm=\pm i\sqrt{\delta y^* \gamma_0}.
\end{equation}
Consequently, if \eqref{Eq:Cond_Imag} does not hold, then $E^*$ is locally asymptotically stable for \eqref{Eq:System}.
\end{theorem}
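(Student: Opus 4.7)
My plan proceeds in three stages: (i) a magnitude estimate that rules out characteristic roots in the open right half-plane, (ii) a direct computation of purely imaginary roots, and (iii) the invocation of a standard linearised stability principle.

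First, I would rewrite \eqref{Eq:LocalStab} in the compact form
$$Q(\lambda)=b\lambda\,e^{-\lambda\tau}, \qquad Q(\lambda):=\lambda^{2}+b\lambda+\delta\gamma_{0}y^{*}, \qquad b:=\beta_{0}e^{-\mu_{0}\tau},$$
and note that $\lambda=0$ is never a root, since \eqref{Eq:R0>1} forces $y^{*}>0$ and hence $Q(0)=\delta\gamma_{0}y^{*}>0$ while the right-hand side vanishes. The crux of stage (i) is the pointwise inequality
$$|Q(\lambda)|^{2}\;\geq\;b^{2}|\lambda|^{2}\qquad\text{for every }\lambda\in\C\text{ with }\mathrm{Re}(\lambda)\geq 0, \qquad (\star)$$
with strict inequality whenever $\mathrm{Re}(\lambda)>0$. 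To establish $(\star)$ I would write $\lambda=\sigma+i\omega$ and compute
$$|Q(\lambda)|^{2}-b^{2}|\lambda|^{2}=(\sigma^{2}+b\sigma+\delta\gamma_{0}y^{*})^{2}-b^{2}\sigma^{2}+\omega^{4}+2\omega^{2}(\sigma^{2}+b\sigma-\delta\gamma_{0}y^{*}),$$
observe that at $\sigma=0$ this expression collapses to $(\omega^{2}-\delta\gamma_{0}y^{*})^{2}\geq 0$, and check that its partial derivative in $\sigma$ equals $4\sigma^{3}+6b\sigma^{2}+4\delta\gamma_{0}y^{*}\sigma+4\sigma\omega^{2}+2b\delta\gamma_{0}y^{*}+2b\omega^{2}\geq 2b\delta\gamma_{0}y^{*}>0$ for $\sigma\geq 0$. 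Thus the expression is strictly increasing in $\sigma\geq 0$, which gives $(\star)$ and its strict version. Combined with the trivial bound $|b\lambda e^{-\lambda\tau}|=b|\lambda|e^{-\sigma\tau}<b|\lambda|$ for $\sigma>0$, the identity $Q(\lambda)=b\lambda e^{-\lambda\tau}$ cannot hold in the open right half-plane, so no characteristic root has strictly positive real part.

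For stage (ii), I would substitute $\lambda=i\omega$ with $\omega\neq 0$ into $Q(i\omega)=ib\omega\,e^{-i\omega\tau}$ and split into real and imaginary parts to get
$$\delta\gamma_{0}y^{*}-\omega^{2}=b\omega\sin(\omega\tau),\qquad b\omega=b\omega\cos(\omega\tau).$$
Since $b\omega\neq 0$, the second relation forces $\cos(\omega\tau)=1$, hence $\omega\tau\in 2\pi\Z$, and substitution into the first gives $\omega^{2}=\delta\gamma_{0}y^{*}$, i.e.\ $\omega=\pm\sqrt{\delta\gamma_{0}y^{*}}$. Together these yield both condition \eqref{Eq:Cond_Imag} and the roots \eqref{Eq:Lambda}.

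For stage (iii), when \eqref{Eq:Cond_Imag} fails the previous steps give a spectrum strictly in the open left half-plane. Because the quasi-polynomial \eqref{Eq:LocalStab} is of retarded type, only finitely many of its roots can lie in any strip $\mathrm{Re}(\lambda)\geq -\eta$ with $\eta>0$, so the spectral abscissa is strictly negative; the classical principle of linearised stability for autonomous retarded functional differential equations (see e.g.\ \cite{Smith2010}) then yields the local asymptotic stability of $E^{*}$ for \eqref{Eq:System}. The main technical obstacle I foresee is the magnitude inequality $(\star)$, which although elementary demands a careful algebraic expansion and a sign analysis of its derivative in $\sigma$; everything else is then either a direct substitution or a standard appeal to DDE theory.
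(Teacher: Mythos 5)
Your proof is correct, but the way you exclude roots from the open right half-plane is genuinely different from the paper's. The paper invokes an absolute-stability criterion (Proposition 4.9 of Smith, restated as Proposition \ref{Prop:StabAbs}), finds that its hypothesis $|\kappa_2(iy)|<|\kappa_1(iy)|$ fails — equality occurs at $y=\sqrt{\delta y^*\gamma_0}$ — and repairs this by perturbing the characteristic equation to $(1+\varepsilon)\kappa_1(\lambda)+\kappa_2(\lambda)e^{-\lambda\tau}=0$, applying the criterion for each small $\varepsilon>0$, and letting $\varepsilon\to 0$ using continuous dependence of the roots. You instead prove the single pointwise inequality $|Q(\lambda)|^2\geq b^2|\lambda|^2$ on the whole closed right half-plane (I checked the expansion, the value $(\omega^2-\delta\gamma_0 y^*)^2$ at $\sigma=0$, and the $\sigma$-derivative $4\sigma^3+6b\sigma^2+4\delta\gamma_0y^*\sigma+4\sigma\omega^2+2b\delta\gamma_0y^*+2b\omega^2>0$; all correct), and combine it with $|e^{-\lambda\tau}|<1$ for $\Re(\lambda)>0$ to get an immediate contradiction. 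This buys a self-contained, purely algebraic argument that handles the borderline equality on the imaginary axis without any perturbation or limiting step (the paper's passage to the limit in $\varepsilon$ is standard but slightly delicate for quasi-polynomials), at the cost of a longer hand computation; the paper's version is shorter on the page because it outsources the work to the cited proposition. Your stage (ii) — substituting $\lambda=i\omega$, forcing $\cos(\omega\tau)=1$ and $\omega^2=\delta\gamma_0 y^*$ — is identical to the paper's, and your stage (iii) appeal to the finiteness of roots in right half-strips plus the principle of linearised stability is the standard justification the paper leaves implicit.
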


Before proving the theorem, let us remind let a result (see \cite{Smith2010} Proposition 4.9) about absolute stability.

\begin{proposition}\label{Prop:StabAbs}
Let $\kappa_1, \kappa_2$ be two polynomial functions with real coefficients satisfying the equation \eqref{Eq:LocalStab} and suppose that:
\begin{enumerate}
\item $\kappa_1(\lambda)\neq 0, \Re(\lambda)\geq 0$.
\item $|\kappa_2(iy)|<|\kappa_1(iy)|, 0\leq y<\infty$.
\item $\lim_{|\lambda|\to \infty, \Re(\lambda)\geq 0} |\kappa_2(\lambda)/\kappa_1(\lambda)|=0$.
\end{enumerate}
Then every root $\lambda$ of \eqref{Eq:LocalStab} satisfies $\Re(\lambda)<0$ for every $\tau \geq 0$.
\end{proposition}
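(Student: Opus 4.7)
The plan is to factor $\kappa_1(\lambda)+\kappa_2(\lambda)e^{-\lambda\tau}=\kappa_1(\lambda)\bigl[1+f(\lambda)e^{-\lambda\tau}\bigr]$ where $f:=\kappa_2/\kappa_1$, and to prove that the rational function $f$ satisfies $|f(\lambda)|<1$ on the closed right half-plane $H:=\{\Re(\lambda)\geq 0\}$. Once this is established, the identity $|e^{-\lambda\tau}|=e^{-\tau\Re(\lambda)}\leq 1$ valid on $H$ for $\tau\geq 0$ immediately forces the bracketed factor to be nonzero, and hypothesis (1) guarantees the prefactor $\kappa_1(\lambda)$ is nonzero on $H$, ruling out any root there.

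First I would observe that by hypothesis (1), $\kappa_1$ does not vanish on $H$, so $f$ is holomorphic in an open neighborhood of $H$ (and in particular continuous on $H$). Since $\kappa_1,\kappa_2$ have real coefficients one has $\kappa_j(-iy)=\overline{\kappa_j(iy)}$, so hypothesis (2) extends to $|f(iy)|<1$ for every $y\in\mathbb{R}$. Moreover hypothesis (3), specialized to $\lambda=iy$, gives $|f(iy)|\to 0$ as $|y|\to\infty$, so the continuous function $y\mapsto|f(iy)|$ attains its supremum on $\mathbb{R}$, and thus
$$
M:=\sup_{y\in\mathbb{R}}|f(iy)|<1.
$$

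Next I would upgrade this boundary bound to all of $H$ by a standard semidisk argument. For $R>0$, let $D_R:=\{\lambda\in\mathbb{C}:\Re(\lambda)\geq 0,\ |\lambda|\leq R\}$ and let $C_R$ denote its semicircular arc. Hypothesis (3) implies $\sup_{\lambda\in C_R}|f(\lambda)|\to 0$ as $R\to\infty$, so for $R$ large enough we have $|f|\leq M$ on $C_R$, and together with $|f|\leq M$ on the diameter (imaginary segment) we obtain $|f|\leq M$ on $\partial D_R$. The maximum modulus principle applied to the holomorphic function $f$ on $D_R$ yields $|f(\lambda)|\leq M$ for every $\lambda\in D_R$. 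Since any $\lambda_0\in H$ lies in $D_R$ for $R$ large enough, we conclude
$$
|f(\lambda)|\leq M<1,\qquad\forall\,\lambda\in H.
$$

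Finally, for any $\tau\geq 0$ and $\lambda\in H$, $|f(\lambda)e^{-\lambda\tau}|\leq M\,e^{-\tau\Re(\lambda)}\leq M<1$, hence $1+f(\lambda)e^{-\lambda\tau}\neq 0$ and the factorization shows that no $\lambda\in H$ can solve \eqref{Eq:LocalStab}. Thus every root has $\Re(\lambda)<0$, uniformly in $\tau\geq 0$. The only genuine delicacy is justifying a maximum principle on the unbounded domain $H$: this is where hypothesis (3) plays its role, allowing reduction to bounded semidisks rather than invoking a Phragmén–Lindelöf statement for a half-plane. The remainder of the argument is elementary once that bound is in hand.
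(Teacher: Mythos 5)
The paper does not actually prove this proposition: it is recalled verbatim from Smith's monograph (Proposition 4.9 there) as a known absolute-stability criterion, so there is no in-paper argument to compare against. Your proof is correct and is essentially the standard one. Writing the characteristic function as $\kappa_1(\lambda)\bigl[1+f(\lambda)e^{-\lambda\tau}\bigr]$ with $f=\kappa_2/\kappa_1$, using the real-coefficient symmetry $\kappa_j(-iy)=\overline{\kappa_j(iy)}$ to extend hypothesis (2) to the whole imaginary axis, and then pushing the boundary bound $M:=\sup_{y}|f(iy)|<1$ into the half-plane by the maximum modulus principle on large semidisks (with hypothesis (3) controlling the arcs) is exactly the right mechanism; the resulting bound $|f(\lambda)e^{-\lambda\tau}|\leq M<1$ on $\{\Re(\lambda)\geq 0\}$, valid for all $\tau\geq 0$, rules out roots there. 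Two cosmetic remarks. First, the step ``for $R$ large enough, $|f|\leq M$ on $C_R$'' silently assumes $M>0$; if $M=0$ then $f$ vanishes on the imaginary axis, hence $\kappa_2\equiv 0$ and the claim is immediate from (1) --- or simply bound $|f|$ on $\partial D_R$ by $\max\bigl(M,\sup_{C_R}|f|\bigr)<1$, which needs no case split. Second, the same computation can be phrased as a Rouch\'e comparison of $\kappa_1(\lambda)+\kappa_2(\lambda)e^{-\lambda\tau}$ against $\kappa_1(\lambda)$ on $\partial D_R$, which avoids forming the quotient; the content is identical.
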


\begin{proof}(Theorem \ref{Thm:LocStab}.)
Let us check the hypotheses of Proposition \ref{Prop:StabAbs}. \\
\begin{enumerate}
\item Let $\lambda=\lambda_r +i \lambda_i$. Then 
$$\kappa_1(\lambda)=\lambda_r^2-\lambda_i^2+\beta_0 \lambda_r e^{-\mu_0 \tau}+\delta y^*\gamma_0+2i \lambda_i \lambda_r +i \beta_0\lambda_i e^{-\mu_0 \tau}.$$
Thus we have
\begin{equation*}
\kappa_1(\lambda)=0 \Longleftrightarrow\left\{
\begin{array}{lcl}
\lambda_r^2-\lambda_i^2+\beta_0 \lambda_r e^{-\mu_0 \tau}+\delta y^* \gamma_0=0, \\
2\lambda_r\lambda_i+\beta_0 e^{-\mu_0 \tau}\lambda_i=0.
\end{array}
\right.
\end{equation*}
The second equation gives us 
$$\lambda_i=0 \quad \text{ or } \quad 2\lambda_r+\beta_0 e^{-\mu_0 \tau}=0.$$
If $\lambda_i=0$, then we have
$$\lambda_r^2+\beta_0 \lambda_r e^{-\mu_0 \tau}+\delta y^* \gamma_0=0$$
and the latter equation has no nonnegative solution. If
$$2\lambda_r +\beta_0e^{-\mu_0 \tau}=0,$$
then necessarily $\lambda_r<0$. Consequently the first condition is satisfied.
\item We now compute the limit. We have
\begin{equation*}
\begin{array}{lcl}
&&\dfrac{|\kappa_2(\lambda)|^2}{|\kappa_1(\lambda)|^2}\\
&=&\dfrac{(\beta_0 e^{-\mu_0\tau})^2(\lambda_r^2+\lambda_i^2)}{(\lambda_r^2-\lambda_i^2+\beta_0 \lambda_r e^{-\mu_0 \tau}+\delta y^* \gamma_0)^2+(2\lambda_i\lambda_r+\beta_0\lambda_i e^{-\mu_0\tau})^2}.
\end{array}
\end{equation*}
The denominator is thus equal to 
$$(\lambda_r^2+\lambda_i^2)^2+M-2\lambda_i^2 \delta y^* \gamma_0,$$ where
\begin{equation*}
\begin{array}{lcl}
M&:=&(\beta_0 \lambda_r e^{-\mu_0 \tau})^2+(\delta y^* \gamma_0)^2+(\beta_0 \lambda_i e^{-\mu_0 \tau})^2+2\lambda_i^2 \lambda_r \beta_0 e^{-\mu_0 \tau} \\
&&+ 2\lambda_r^3\beta_0 e^{-\mu_0 \tau}+2\lambda_r^2 y^*\delta \gamma_0+2\beta_0 \lambda_r e^{-\mu_0 \tau}\delta y^* \gamma_0
\end{array}
\end{equation*}
so $M\geq 0$ when $\Re(\lambda)\geq 0$.
Consequently we have:
\begin{equation*}
\begin{array}{rcl}
\dfrac{|\kappa_2(\lambda)|^2}{|\kappa_1(\lambda)|^2}\leq\dfrac{(\beta_0 e^{-\mu_0 \tau})^2}{(\lambda_r^2+\lambda_i^2)-2\dfrac{\lambda_i^2 \delta y^* \gamma_0}{\lambda_r^2+\lambda_i^2}} \leq \dfrac{(\beta_0 e^{-\mu_0 \tau})^2}{\lambda_r^2+\lambda_i^2-2\delta y^* \gamma_0}\xrightarrow[\Re(\lambda)\geq 0]{|\lambda|\to \infty}0.
\end{array}
\end{equation*}
Thus the third condition is satisfied. 
\item We know that
$$|\kappa_2(iy)|=y\beta_0 e^{-\mu_0 \tau}$$
and
$$|\kappa_1(iy)|=\sqrt{(\delta y^* \gamma_0-y^2)^2+(y \beta_0 e^{-\mu_0 \tau})^2}.$$
Thus for every $y\geq 0$, we have
$$|\kappa_2(iy)|\leq |\kappa_1(iy)|$$
and there is equality only when 
$$\delta y^*\gamma_0=y^2,$$
which means
$$y=\sqrt{\delta y^* \gamma_0}.$$
Consequently the second condition is \textit{not} totally satisfied but by slightly modifying the system, we can avoid the problem. Following the sketch of proof of Section 3 in \cite{Brauer87}, we consider the following characteristic equation, for $\varepsilon>0$ small enough:
\begin{equation}\label{Eq:CaracModif}
\kappa_1(\lambda)+\varepsilon \kappa_1(\lambda)+\kappa_2(\lambda)e^{-\lambda \tau}=0.
\end{equation}
Thus the hypotheses of Proposition \ref{Prop:StabAbs} are satisfied for \eqref{Eq:CaracModif}, hence all roots of \eqref{Eq:CaracModif} have negative real part for all $\varepsilon>0$ small enough. Since the roots of \eqref{Eq:CaracModif} continuously depend of $\varepsilon$, then all roots of $\eqref{Eq:LocalStab}$ have non positive real part. Let $\lambda=i \omega, \omega>0$. Then $\lambda$ verifies the equation \eqref{Eq:LocalStab} if and only if
$$-\omega^2+i\omega\beta_0 e^{-\mu_0 \tau} +\delta y^* \gamma_0=i\omega \beta_0 e^{-\mu_0 \tau}e^{-i\omega\tau}.$$
Considering the real and imaginary parts, we get the following system:
\begin{equation*}
\left\{
\begin{array}{rcl}
-\omega^2+\delta y^* \gamma_0&=&\omega\beta_0 e^{-\mu_0 \tau}\sin(\omega \tau), \\
\omega \beta_0  e^{-\mu_0 \tau}&=&\omega \beta_0 e^{-\mu_0 \tau}\cos(\omega \tau).
\end{array}
\right.
\end{equation*}
\begin{equation*}
\Leftrightarrow
\left\{
\begin{array}{rcl}
\omega^2&=&\delta y^* \gamma_0, \\
\cos(\omega \tau)&=&1.
\end{array}
\right.
\end{equation*}
Consequently, there are purely imaginary roots of \eqref{Eq:LocalStab} if and only if \eqref{Eq:Cond_Imag} is satisfied. 
\end{enumerate}
\end{proof}

\subsection{Lyapunov function}

Now we want to get the global attractiveness of $E^*$ on some subset $S\subset \X$. To this end, we use Lyapunov functionals. Let
$$L_*(\phi,y)=V_1(\phi,y)+V_2(\phi,y)+V_3(\phi,y)$$
formally defined for $(\phi,y)\in \X$ by
$$V_1(\phi,y)=\alpha X^*g\left(\dfrac{\phi(\tau)}{X^*}\right),$$
$$V_2(\phi,y)=y^*g\left(\dfrac{y}{y^*}\right),$$
$$V_3(\phi,y)=\alpha \beta_0 e^{-\mu_0 \tau}X^*\int_0^\tau g\left(\dfrac{\phi(s)}{X^*}\right)\mathrm{d}s,$$
where $g$ is defined by \eqref{Eq:G}. One may observe that the function $V_1+V_2$ is the one used in the classical Lotka-Volterra ODE model to prove the periodicity of the solutions. Note that the fact that
\begin{equation}\label{Eq:Expl-Lyap}
\lim_{\phi(\tau)\to 0}L_*(\phi,y)=\infty, \qquad \lim_{y\to 0}L_*(\phi,y)=\infty
\end{equation}
will play an important role in the next section.

\begin{proposition}\label{Prop:WellDefined}
The function $(t,w)\mapsto L_*(\Phi_t(w))$ is well-defined on $[\tau,\infty)\times S_3$ whenever \eqref{Eq:R0>1} holds.
\end{proposition}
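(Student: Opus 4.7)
The proposition is essentially a positivity bookkeeping statement: it amounts to checking that every argument of $g$ appearing in $L_*(\Phi_t(w))$ stays in $(0,\infty)$, so that the logarithms are finite, and that the integral in $V_3$ is over a bounded continuous integrand. My plan is therefore to read off each term of $L_*$ and match it to an already-established positivity statement.

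First I would observe that, because \eqref{Eq:R0>1} holds, the equilibrium $E^* = (X^*,y^*)$ lies in the positive orthant, so the constants $X^*$ and $y^*$ appearing in the denominators in $V_1$, $V_2$, $V_3$ are strictly positive; this is the only place where the hypothesis $R_0>1$ is used. Thus the formal expression of $L_*(\phi,y)$ makes sense as soon as $\phi(\tau)>0$, $y>0$, and $\phi(s)>0$ for every $s\in[0,\tau]$ (with enough regularity to integrate $g(\phi(\cdot)/X^*)$ over $[0,\tau]$).

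Next I would appeal directly to Proposition \ref{Prop:Invariance}(1), which states that $S_3$ is positively invariant under $\Phi$. Consequently, for any $w=(\phi,y_\tau)\in S_3$ and any $t\geq \tau$, one has $\Phi_t(w)=(X_t,y(t))\in S_3$, which means precisely that $y(t)>0$ and $X_t(a)>0$ for every $a\in[0,\tau]$. In particular $X_t(\tau)=X(t)>0$, so $g(X_t(\tau)/X^*)$ and $g(y(t)/y^*)$ are finite, giving the well-definedness of $V_1(\Phi_t(w))$ and $V_2(\Phi_t(w))$.

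Finally, for $V_3(\Phi_t(w))$ I would argue that $X_t\in\Co([0,\tau],\R)$ by construction of the semiflow in Proposition \ref{Prop:Semiflow}, and that the strict positivity of $X_t$ on the compact interval $[0,\tau]$ yields $\min_{s\in[0,\tau]}X_t(s)>0$; thus $s\mapsto g(X_t(s)/X^*)$ is continuous and bounded on $[0,\tau]$, and the integral defining $V_3(\Phi_t(w))$ is finite. Putting the three pieces together gives the claim. There is no real obstacle here — the only point to be careful about is invoking the continuity of $X_t$ (hence the uniform strict positivity on $[0,\tau]$) rather than the mere pointwise positivity, since otherwise the singularity of $g$ at $0$ could in principle make $V_3$ blow up.
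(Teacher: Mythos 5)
Your proof is correct and follows essentially the same route as the paper's: the hypothesis $R_0>1$ guarantees that $E^*$ lies in the positive orthant, and the positive invariance of $S_3$ (Proposition \ref{Prop:Invariance}, part 1) ensures every argument of $g$ stays in $(0,\infty)$. Your extra remark about using the continuity of $X_t$ to get uniform positivity on $[0,\tau]$, so that the integral in $V_3$ is finite, is a detail the paper leaves implicit but is a worthwhile clarification.
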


\begin{proof}
Note that the assumption \eqref{Eq:R0>1} is necessary to define $L_*$ since the equilibrium $E^*$ only exists in this case. Moreover, the positive invariance of the set $S_3$ (Proposition \ref{Prop:Invariance}, 1.) proves that $V_1, V_2$ and $V_3$ are well defined when applied to the semiflow $\Phi$.
\end{proof}
We remind the definition of a Lyapunov function for the semiflow $\Phi$ in the case of infinite dimensional systems (see e.g \cite{Kuang93} Definition 5.1, p. 30 or \cite{Smith2010}, p. 80).

\begin{definition}
Let $D\subset \X$. We say that $L:\X\to \R$ is a Lyapunov function on $D$ if the following hold:
\begin{enumerate}
\item $L$ is continuous on $\overline{D}$ (the closure of $D$);	
\item $L$ decreases along orbits starting in $D$, \textit{i.e.} $t\mapsto L(\Phi_t(z))$ is a nonincreasing function of $t\geq \tau$, for every $z\in D$.
\end{enumerate}
\end{definition}

\begin{proposition}\label{Prop:Energ}
For every $z\in S_3$, the positive function
\begin{equation}\label{Eq:Energ}F_{z}:[\tau,\infty)\ni t\longmapsto L_*(\Phi_t(z))\in \R_+
\end{equation}
defined by
\begin{equation*}
F_{z}(t):=\alpha X^* g\left(\dfrac{X(t)}{X^*}\right)+y^*g\left(\dfrac{y(t)}{y^*}\right)+\alpha \beta_0 e^{-\mu_0\tau}X^* \displaystyle \int_0^\tau g\left(\dfrac{X(t+s-\tau)}{X^*}\right)ds
\end{equation*}
is nonincreasing. 
\end{proposition}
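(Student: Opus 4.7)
The plan is to differentiate $F_z$ along orbits, use the equilibrium identities to force a large cancellation between cross-terms, and end up with a manifestly nonpositive expression involving only $g$ evaluated at a ratio.

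First, I note that Proposition \ref{Prop:Invariance}(1) gives positive invariance of $S_3$, so for $z\in S_3$ we have $X(t)>0$ on $[0,\infty)$ and $y(t)>0$ on $[\tau,\infty)$. This guarantees that $F_z$ is well-defined and $\mathcal{C}^1$, so termwise differentiation of $V_1,V_2,V_3$ is legitimate. I would record the two equilibrium identities
\begin{equation*}
\mu_0+\gamma_0 y^*=\beta_0 e^{-\mu_0\tau}, \qquad \delta=\alpha\gamma_0 X^*,
\end{equation*}
which will be used throughout, and the derivative $g'(x)=1-1/x$.

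Next, I compute each piece separately. Writing $u=X(t)/X^*$ and $v=X(t-\tau)/X^*$, direct differentiation gives
\begin{equation*}
V_1'(t)=\alpha\left(1-\tfrac{1}{u}\right)X'(t), \qquad V_2'(t)=\left(1-\tfrac{y^*}{y(t)}\right)y'(t).
\end{equation*}
For $V_3$, the change of variables $u=t+s-\tau$ turns the integrand into $\int_{t-\tau}^{t}g(X(u)/X^*)\,du$, whose derivative is $\alpha\beta_0 e^{-\mu_0\tau}X^*\bigl[g(u)-g(v)\bigr]$. Substituting the ODEs and using the equilibrium identities, I rewrite
\begin{equation*}
X'(t)=\beta_0 e^{-\mu_0\tau}X^*(v-u)-\gamma_0 X(t)(y(t)-y^*),
\qquad y'(t)=\alpha\gamma_0(X(t)-X^*)y(t),
\end{equation*}
so that $V_2'(t)=\alpha\gamma_0(X(t)-X^*)(y(t)-y^*)$, and the $(y(t)-y^*)$ terms inside $V_1'$ read $-\alpha\gamma_0 X(t)(1-X^*/X(t))(y(t)-y^*)=-\alpha\gamma_0(X(t)-X^*)(y(t)-y^*)$, which exactly cancels $V_2'(t)$.

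After this cancellation, the remaining part of $V_1'+V_2'$ is $\alpha\beta_0 e^{-\mu_0\tau}X^*(1-1/u)(v-u)=\alpha\beta_0 e^{-\mu_0\tau}X^*(v-u-v/u+1)$, and combining with $V_3'$ I obtain
\begin{equation*}
F_z'(t)=\alpha\beta_0 e^{-\mu_0\tau}X^*\bigl[\,1-v/u+\ln(v/u)\,\bigr]=-\alpha\beta_0 e^{-\mu_0\tau}X^*\,g\!\left(\frac{X(t-\tau)}{X(t)}\right).
\end{equation*}
Since $g\geq 0$ on $(0,\infty)$ with equality only at $1$, the conclusion $F_z'(t)\leq 0$ follows.

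The only substantive obstacle is the cross-term cancellation: one has to shepherd several copies of $\mu_0 X$, $\gamma_0 Xy$ and $\gamma_0 X^* y^*$ through the computation without sign errors, and recognize that the remaining terms conspire into precisely $g(v/u)$. Everything else—well-definedness, positivity of $X$ and $y$, and nonincrease—follows cleanly from $g\geq 0$ and the invariance in Proposition \ref{Prop:Invariance}.
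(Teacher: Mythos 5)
Your proof is correct and follows essentially the same route as the paper: differentiate $V_1,V_2,V_3$ along orbits, use the equilibrium identities to cancel the cross-terms in $y(t)-y^*$, and collapse the remainder into $-\alpha\beta_0 e^{-\mu_0\tau}X^*\,g\bigl(X(t-\tau)/X(t)\bigr)\leq 0$. Your reformulation of the ODEs in deviation form before substituting is a slightly tidier bookkeeping of the same computation.
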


\begin{proof}
Let $z:=(\phi,y_\tau)\in S_3$. We can calculate the derivative of $L_*$:
\begin{equation*}
\begin{array}{lcl}
& &\dfrac{\partial}{\partial t}[L_*(\Phi_t(z)] \vspace{0.1cm} \\
&=&\dfrac{\partial}{\partial t}\left[\alpha X^* g\left(\dfrac{X(t)}{X^*}\right)+y^*g\left(\dfrac{y(t)}{y^*}\right)+V_3(\Phi_t(\phi,y_0))\right]\\
&=&\alpha\left(1-\dfrac{X^*}{X(t)}\right)X'(t)+\left(1-\dfrac{y^*}{y(t)}\right)y'(t)+\dfrac{\partial}{\partial t}[V_3(\Phi_t(\phi,y_\tau))].
\end{array}
\end{equation*}
We see that
$$\dfrac{d}{dt}\left[g\left(\dfrac{X(t+s)}{X^*}\right)\right]=\dfrac{d}{ds}\left[g\left(\dfrac{X(t+s)}{X^*}\right)\right]$$
so
\begin{equation*}
\begin{array}{lcl}
\dfrac{\partial}{\partial t}[V_3(\Phi_t(\phi,y_\tau))]&=&\alpha \beta_0 e^{-\mu_0 \tau}X^* \displaystyle\int_0^\tau \dfrac{d}{dt}\left[g\left(\dfrac{X(t+s-\tau)}{X^*}\right)\right]ds \vspace{0.1cm} \\
&=&\alpha \beta_0 e^{-\mu_0 \tau}X^* \displaystyle\int_0^\tau \dfrac{d}{ds}\left[g\left(\dfrac{X(t+s-\tau)}{X^*}\right)\right]ds \vspace{0.1cm} \\
&=&\alpha \beta_0 e^{-\mu_0 \tau}X^* \left[g\left(\dfrac{X(t)}{X^*}\right)-g\left(\dfrac{X(t-\tau)}{X^*}\right)\right].
\end{array}
\end{equation*}
Consequently we have
\begin{equation*}
\begin{array}{rcl}
& &\dfrac{\partial}{\partial t}[L_*(\Phi_t(z)] \vspace{0.1cm} \\

&=&\alpha\left(1-\dfrac{X^*}{X(t)}\right)\left[\beta_0 e^{-\mu_0 \tau} X(t-\tau)-\mu_0 X(t)-\gamma_0 y(t)X(t)\right]\\

&\quad& +\alpha \beta_0 e^{-\mu_0 \tau}X^*\left[g\left(\dfrac{X(t)}{X^*}\right)-g\left(\dfrac{X(t-\tau)}{X^*}\right)\right] \\

&\quad& +\left(1-\dfrac{y^*}{y(t)}\right)\left[\alpha \gamma_0 X(t)y(t)-\delta y(t)\right] \\

&=& \alpha \left(1-\dfrac{X^*}{X(t)}\right)\left[\beta_0 e^{-\mu_0 \tau} X(t-\tau) -\mu_0 X(t)\right]+ \alpha\gamma_0 y(t)X^* \\

&\quad& +\alpha\beta_0 e^{-\mu_0 \tau}\left[X(t)-X^*\ln\left(\dfrac{X(t)}{X(t-\tau)}\right)-X(t-\tau)\right] \\

&\quad& -\left(1-\dfrac{y^*}{y(t)}\right)\delta y(t) -\alpha \gamma_0 X(t) y^*
\end{array}
\end{equation*}
We know from \eqref{Eq:System} the following properties about the equilibrium:
\begin{enumerate}
\item $\alpha \gamma_0 X^*=\delta$,
\item $\alpha \mu_0 X^*+\delta y^*=\alpha \beta_0 X^* e^{-\mu_0 \tau}$,
\item $\alpha\mu_0+\alpha\gamma_0 y^*=\alpha\beta_0 e^{-\mu_0 \tau}$.
\end{enumerate}
Thus we get
\begin{equation*}
\begin{array}{rcl}
& &\dfrac{\partial}{\partial t}[L_*(\Phi_t(z))] \vspace{0.1cm} \\
&=&\alpha\left(1-\dfrac{X^*}{X(t)}\right)[\beta_0 e^{-\mu_0 \tau}X(t-\tau)-\mu_0 X(t)]+\delta y^* -\alpha \gamma_0 X(t)y^*\\

&\quad& +\alpha \beta_0e^{-\mu_0 \tau}\left[X(t)-X^*\ln\left(\dfrac{X(t)}{X(t-\tau)}\right)-X(t-\tau))\right] \\

&=& \alpha \left(1-\dfrac{X^*}{X(t)}\right)[\beta_0 e^{-\mu_0 \tau}X(t-\tau)]-\alpha\mu_0 X(t) +\alpha \beta_0 X^*e^{-\mu_0 \tau}\\

&\quad& -\alpha \gamma_0 X(t)y^*+\alpha \beta_0 e^{-\mu_0 \tau}\left[ X(t)-X^*\ln\left(\dfrac{X(t)}{X(t-\tau)}\right)-X(t-\tau))\right] \\

&=&\alpha\left(1-\dfrac{X^*}{X(t)}\right)[\beta_0 e^{-\mu_0 \tau}X(t-\tau)]-\alpha \beta_0 e^{-\mu_0 \tau} X(t)+\alpha \beta_0 X^* e^{-\mu_0 \tau}\\

&\quad& +\alpha \beta_0 e^{-\mu_0 \tau}\left[X(t)-X^*\ln\left(\dfrac{X(t)}{X(t-\tau)}\right)-X(t-\tau))\right] \\
&=& -\left(\alpha X^* \beta_0 e^{-\mu_0 \tau}\right)\left(\dfrac{X(t-\tau)}{X(t)}\right)+\alpha \beta_0 X^* e^{-\mu_0 \tau} -\alpha \beta_0 e^{-\mu_0 \tau}X^*\ln\left(\dfrac{X(t)}{X(t-\tau)}\right) 
\end{array}
\end{equation*}
Hence we obtain
$$\dfrac{\partial}{\partial t}[L_*(\Phi_t(z))] =-\alpha \beta_0 X^* e^{-\mu_0 \tau} \left(\dfrac{X(t-\tau)}{X(t)}-1+\ln\left(\dfrac{X(t)}{X(t-\tau)}\right)\right)$$
and consequently
\begin{equation}\label{Eq:Lyapunov}
\dfrac{\partial}{\partial t}[L_*(\Phi_t(z)]=-\alpha \beta_0 X^* e^{-\mu_0 \tau} g\left(\dfrac{X(t-\tau)}{X(t)}\right), \quad \forall t\geq \tau
\end{equation}
and the nonnegativity of $g$ implies that $F_z$ is a nonincreasing function.
\end{proof}
One may note that $\overline{S_3}=\X_+$. Consequently, $L_*$ cannot be a Lyapunov function on $S_3$, since it is not continuous on $\X_+\setminus S_3$ (the function explodes at the boundary, due to \eqref{Eq:Expl-Lyap}). To avoid this problem, we define for every $\varepsilon>0$, the set
$$S^\varepsilon_3:=\{(\phi,y)\in \X_+: y\geq\varepsilon, \ \phi(a)\geq\varepsilon \quad \forall a\in[0,\tau]\}\subset S_3$$
that is a closed subset of $\X_+$. We now can give the main result of this section.

\begin{corollary}\label{Cor:Lyapunov}
For every $\varepsilon>0$, $L_*$ is a Lyapunov function on $S^\varepsilon_3$.
\end{corollary}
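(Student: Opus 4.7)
The plan is to verify the two defining properties of a Lyapunov function on the set $D=S_3^\varepsilon$: continuity on $\overline{D}$ and monotone decrease along orbits starting in $D$. Property (2) will be essentially free from what has already been proved; the small amount of real work lies in property (1).

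First I would observe that $S_3^\varepsilon$ is a closed subset of $\X_+$. Indeed, the maps $(\phi,y)\mapsto y$ and, for each $a\in[0,\tau]$, $(\phi,y)\mapsto \phi(a)$ are continuous on $\X$, and $S_3^\varepsilon$ is the intersection of the preimages of $[\varepsilon,\infty)$ under this family of maps. Hence $\overline{S_3^\varepsilon}=S_3^\varepsilon$, and it suffices to check that $L_*$ is continuous on $S_3^\varepsilon$ itself.

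Next I would check continuity of the three summands $V_1, V_2, V_3$ on $S_3^\varepsilon$. On this set one has $\phi(\tau)\geq \varepsilon$ and $y\geq \varepsilon$, so the arguments $\phi(\tau)/X^*$ and $y/y^*$ of $g$ remain in a compact subinterval of $(0,\infty)$ on which $g$ is continuous (even Lipschitz); composing with the continuous evaluation map $\phi\mapsto\phi(\tau)$ yields continuity of $V_1$ on $S_3^\varepsilon$, and continuity of $V_2$ in $y$ is immediate. For $V_3$, if $(\phi_n,y_n)\to(\phi,y)$ in $S_3^\varepsilon$, then $\phi_n\to \phi$ uniformly on $[0,\tau]$ with all values lying in some fixed compact interval $[\varepsilon,M]$ (by uniform boundedness of the convergent sequence). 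Since $g$ is uniformly continuous on $[\varepsilon/X^*,M/X^*]$, the composition $g(\phi_n(\cdot)/X^*)$ converges uniformly to $g(\phi(\cdot)/X^*)$, so dominated convergence (or plain uniform convergence) passes the limit through the integral, giving continuity of $V_3$.

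Finally, for the monotonicity property, the inclusion $S_3^\varepsilon\subset S_3$ together with the positive invariance of $S_3$ (Proposition \ref{Prop:Invariance}, 1.) ensures that, for any $z\in S_3^\varepsilon$, the orbit $\Phi_t(z)$ lies in $S_3$ for all $t\geq\tau$, so that $L_*(\Phi_t(z))$ is well-defined by Proposition \ref{Prop:WellDefined}. Proposition \ref{Prop:Energ} then directly gives that $t\mapsto L_*(\Phi_t(z))$ is nonincreasing on $[\tau,\infty)$. Combining this with the continuity established above concludes the proof. The main (and only) genuinely nontrivial point is the uniform-continuity argument that transfers convergence in $\X$ to convergence inside the integral defining $V_3$; this is why one cannot simply work on $S_3$ itself, where $g$ can blow up as $\phi(a)\to 0^+$, and why the $\varepsilon$-truncation is needed.
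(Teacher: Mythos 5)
Your proof is correct and follows exactly the route the paper intends: the paper states the corollary without proof, relying implicitly on the closedness of $S_3^\varepsilon$, the continuity of $L_*$ there (which you verify in detail via the uniform-continuity of $g$ on $[\varepsilon/X^*,M/X^*]$), and Proposition \ref{Prop:Energ} for the monotonicity along orbits. Your write-up simply makes explicit the details the authors left to the reader.
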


\begin{remark}
Note that, to perform the global asymptotic analysis of the extinction equilibrium $\overline{E_0}$, one could use the functional: 
$$L_0(\phi,y)=\alpha \phi(\tau)+y+\alpha\beta_0 e^{-\mu_0 \tau}\int_{0}^\tau \phi(s)\mathrm{d}s$$
formally defined for $(\phi,y)\in \X_+$. Then one can deduce the global stability of $E_0$ in $\X_+$ when $R_0<1$. This result was already obtained in \cite{PerassoRichard19}  Theorem 3.5, without the use of Lyapunov function.
\end{remark}

\subsection{Attractive set of the solutions}\label{Sec:Attractive_Set}

We start by proving the boundedness of the solutions.
\begin{lemma}\label{Lemma:Bounded}
For every $z\in S_2$, there exists a finite constant $C(z)>0$, such that $X(t)\leq C(z)$ and $y(t)\leq C(z)$, for every $t\geq \tau$.
\end{lemma}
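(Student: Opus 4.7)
The strategy is to deduce the boundedness from the Lyapunov decay of Proposition \ref{Prop:Energ}, once the solution has entered the invariant set $S_3$ on which $L_*$ is finite. Given $z \in S_2$, Proposition \ref{Prop:Invariance}(2) yields $\Phi_t(z) \in S_3$ for every $t \geq 3\tau$. Using the semiflow property (Proposition \ref{Prop:Semiflow}) I would treat $\Phi_{3\tau}(z) \in S_3$ as a new initial datum and apply Proposition \ref{Prop:Energ}, so that $t \mapsto L_*(\Phi_t(z))$ is well-defined, nonnegative and nonincreasing on $[3\tau,\infty)$; in particular,
$$
L_*(\Phi_t(z)) \leq L_*(\Phi_{3\tau}(z)) =: K(z) < \infty, \qquad \forall t \geq 3\tau.
$$

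Since $g \geq 0$, each of the contributions $V_1$, $V_2$, $V_3$ to $L_*$ is nonnegative; retaining only $V_1$ or only $V_2$ in the definition of $L_*$ gives the pointwise estimates
$$
\alpha X^* g\!\left(\frac{X(t)}{X^*}\right) \leq K(z), \qquad y^* g\!\left(\frac{y(t)}{y^*}\right) \leq K(z), \qquad \forall t \geq 3\tau.
$$
Because $g$ is continuous on $(0,\infty)$, vanishes only at $1$, and tends to $+\infty$ both as $x\to 0^+$ and as $x\to +\infty$, every sublevel set $\{x>0 : g(x)\leq M\}$ is a compact subinterval of $(0,\infty)$. This yields a uniform upper bound for $X(t)$ and for $y(t)$ on $[3\tau,\infty)$. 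On the compact interval $[\tau,3\tau]$, the continuity of the semiflow (Proposition \ref{Prop:Semiflow}) combined with the absence of finite-time blow-up (Remark \ref{Remark:Explosion}) ensures that $X$ and $y$ are continuous and hence bounded. Taking $C(z)$ to be the maximum of these two suprema and the bounds extracted from $K(z)$ gives the required constant.

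The only technical point is that $L_*$ cannot be invoked directly at $t=\tau$: for $z \in S_2 \setminus S_3$, the profile $\phi$ may vanish on a subset of $[0,\tau]$ of positive measure, making $V_3(z) = +\infty$, and it may vanish at $\tau$, making $V_1(z) = +\infty$. Postponing the Lyapunov argument until $t = 3\tau$, at which stage the solution has become strictly positive throughout the delay window, is precisely what forces $L_*$ to remain finite along the subsequent trajectory and permits the estimates above.
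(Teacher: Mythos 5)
Your proof is correct and takes essentially the same route as the paper: the paper first bounds solutions starting in $S_3$ by the monotonicity of $F_z$ from Proposition \ref{Prop:Energ} together with $\lim_{x\to\infty}g(x)=\infty$, and then handles $z\in S_2$ exactly as you do, by invoking Remark \ref{Remark:Explosion} on $[\tau,3\tau]$ and Proposition \ref{Prop:Invariance}(2) to restart the Lyapunov argument from $\Phi_{3\tau}(z)\in S_3$. Your added remarks on the compactness of the sublevel sets of $g$ and on why $L_*$ cannot be evaluated at $t=\tau$ for $z\in S_2\setminus S_3$ merely make explicit what the paper leaves implicit.
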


\begin{proof}
Let $z:=(\phi,y_\tau)\in S_3$. Consequently to Proposition \ref{Prop:Energ}, for every $t\geq \tau$, we have $F_{z}(t)\leq F_{z}(\tau)$, where
$$F_{z}(\tau)=\alpha X^*g\left(\dfrac{\phi(\tau)}{X^*}\right)+y^*g\left(\dfrac{y_\tau}{y^*}\right)+\alpha \beta_0 e^{-\mu_0\tau}X^*\int_0^\tau g\left(\dfrac{\phi(s)}{X^*}\right)ds.$$
Since each term of $F_{z}$ is positive and
$$\lim_{x\to \infty}g(x)=\infty,$$
then there exists a positive constant $C(z)>0$ such that
$$X(t)\leq C(z), \quad y(t)\leq C(z), \quad \forall t\geq \tau.$$

Now suppose that $z\in S_2$. By means of Remark \ref{Remark:Explosion}, the solution $(X(t),y(t))$ is bounded for $t\in[\tau,3\tau]$. Moreover, from Proposition \ref{Prop:Invariance}, 2) and the fact that $\Phi_t(z)\in S_3$ for every $t\geq 3\tau$, we deduce that $(X(t),y(t))$ is bounded for any $t\geq \tau$.
\end{proof}
We continue with a persistence result.

\begin{lemma}\label{Lemma:Pers}
For every $z\in S_3$, there exists $\varepsilon>0$ such that
$$\Phi_t(z)\in S^\varepsilon_3, \quad \forall t\geq \tau.$$
\end{lemma}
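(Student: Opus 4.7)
The plan is to use the Lyapunov inequality from Proposition \ref{Prop:Energ} together with the boundary behavior of the function $g$ to extract uniform positive lower bounds on $X(t)$ and $y(t)$.

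First, I would observe that $z = (\phi, y_\tau) \in S_3$ implies $\phi$ is continuous and strictly positive on the compact interval $[0,\tau]$ and $y_\tau > 0$, so each of $V_1(z), V_2(z), V_3(z)$ is finite, and hence $M := L_*(z) = F_z(\tau) < \infty$. Proposition \ref{Prop:Energ} then gives $F_z(t) \leq M$ for every $t \geq \tau$. Because $V_1, V_2, V_3$ are each nonnegative (recall $g \geq 0$), we deduce the two pointwise inequalities
\begin{equation*}
\alpha X^{*}\, g\!\left(\frac{X(t)}{X^{*}}\right) \leq M, \qquad y^{*}\, g\!\left(\frac{y(t)}{y^{*}}\right) \leq M, \qquad \forall t \geq \tau.
\end{equation*}

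Next, I would exploit the key property of $g$: since $g$ is continuous on $(0,\infty)$ with $\lim_{x \to 0^+} g(x) = \lim_{x \to \infty} g(x) = +\infty$, for any constant $C \geq 0$ the sublevel set $\{x > 0 : g(x) \leq C\}$ is contained in a compact interval $[a,b] \subset (0,\infty)$ with $a > 0$. Applying this to $C_1 = M/(\alpha X^{*})$ yields a constant $\eta_X > 0$ with $X(t) \geq \eta_X X^{*}$ for all $t \geq \tau$; applying it to $C_2 = M/y^{*}$ yields $\eta_y > 0$ with $y(t) \geq \eta_y y^{*}$ for all $t \geq \tau$.

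To finish, I note that for $\Phi_t(z) \in S_3^\varepsilon$ with $t \geq \tau$, we need $X_t(\theta) = X(t+\theta-\tau) \geq \varepsilon$ for all $\theta \in [0,\tau]$, i.e.\ a uniform lower bound on $X$ on the whole half-line $[0,\infty)$. The estimate above handles the range $[\tau,\infty)$. On $[0,\tau]$, we have $X(s) = \phi(s)$, and since $\phi \in \Co([0,\tau],\R)$ is strictly positive, it attains a positive minimum $m_\phi := \min_{s \in [0,\tau]} \phi(s) > 0$. Setting
\begin{equation*}
\varepsilon := \min\bigl(\eta_X X^{*},\ \eta_y y^{*},\ m_\phi\bigr) > 0
\end{equation*}
yields $\Phi_t(z) \in S_3^{\varepsilon}$ for every $t \geq \tau$, as required.

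There is no real obstacle here; the only slightly delicate point is remembering that the state $\Phi_t(z)$ encodes the history of $X$ on the window $[t-\tau, t]$, so that the lower bound must be established on all of $[0,\infty)$ and not merely at $t$ itself. The initial segment $[0,\tau]$ is handled by continuity and strict positivity of $\phi$ on a compact set, while the tail $[\tau,\infty)$ is handled by the Lyapunov bound combined with the blow-up of $g$ at the origin.
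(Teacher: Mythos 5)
Your proof is correct and rests on the same two ingredients as the paper's own argument: the monotonicity of $F_z$ from Proposition \ref{Prop:Energ} and the blow-up of $g$ at $0$ (and at $\infty$). The paper phrases this as a brief contradiction argument, whereas you give the direct quantitative version and also explicitly handle the initial history segment $[0,\tau]$ via the positive minimum of $\phi$ --- a point the paper's proof leaves implicit.
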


\begin{proof}
Let $z\in S_3$. Suppose by contradiction that for every $\varepsilon>0$ there exists $t\geq \tau$ such that
$$X(t)<\varepsilon \quad \text{or} \quad y(t)<\varepsilon.$$
Letting $\varepsilon$ go to $0$ implies that $L_*(\Phi_t(z))$ goes to infinity leading to a contradiction with Proposition \ref{Prop:Energ}.
\end{proof}
In all the following, any `$\tau$-periodic function' will be not constant. We are now ready to compute the attractive set of the solutions.

\begin{theorem}\label{Thm:PeriodConv}
For every initial condition $z\in S_2$, the solution $(X,y)$ converge either to a $\tau$-periodic function or to $E^*$.
\end{theorem}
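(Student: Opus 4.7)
The plan is a LaSalle-type invariance argument based on the Lyapunov functional $L_*$.

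First, I reduce to $z\in S_3$: by Proposition \ref{Prop:Invariance}, 2, for $z\in S_2$ one has $\Phi_{3\tau}(z)\in S_3$, so it is enough to handle orbits starting in $S_3$. Fix $z\in S_3$. By Lemma \ref{Lemma:Pers} there exists $\varepsilon>0$ with $\Phi_t(z)\in S_3^\varepsilon$ for every $t\geq \tau$, and by Lemma \ref{Lemma:Bounded} both $X$ and $y$ are bounded on $[\tau,\infty)$. Equation \eqref{Eq:System} then yields a uniform bound on $X'$, so the delay component $\{X_t:t\geq \tau\}$ is equicontinuous and, by Arzelà–Ascoli, relatively compact in $\Co([0,\tau],\R)$. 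Hence the orbit $\{\Phi_t(z):t\geq \tau\}$ is relatively compact in $\X$, and $\omega(z)$ is a nonempty, compact, connected, invariant subset of $S_3^\varepsilon$.

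Second, by Corollary \ref{Cor:Lyapunov} and nonnegativity of $L_*$, the map $t\mapsto L_*(\Phi_t(z))$ is nonincreasing and bounded below, hence converges to some $\ell_\infty\geq 0$. By continuity of $L_*$ on $S_3^\varepsilon$ one gets $L_*\equiv \ell_\infty$ on $\omega(z)$, so $\frac{d}{dt}L_*(\Phi_t(\tilde z))=0$ for every $\tilde z\in\omega(z)$ and every admissible $t$. Via \eqref{Eq:Lyapunov} and the fact that $g(u)=0$ iff $u=1$, this forces $X(t)=X(t-\tau)$ along every full trajectory lying inside $\omega(z)$.

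Third, substituting $X(t-\tau)=X(t)$ in \eqref{Eq:System} and using the identities $\beta_0 e^{-\mu_0\tau}-\mu_0=\gamma_0 y^*$ and $\delta=\alpha\gamma_0 X^*$ satisfied by the equilibrium, the delay system reduces on such trajectories to the classical Lotka–Volterra ODE
\begin{equation*}
X'(t)=\gamma_0 X(t)(y^*-y(t)),\qquad y'(t)=\alpha\gamma_0 y(t)(X(t)-X^*),
\end{equation*}
whose orbits in the positive orthant are exactly the center $(X^*,y^*)$ and the periodic curves surrounding it. Combined with the delay-periodicity constraint, each trajectory contained in $\omega(z)$ is either constantly equal to $(X^*,y^*)$ or a classical Lotka–Volterra solution of period exactly $\tau$. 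In $\X$, the corresponding subsets (the point $\{\overline{E^*}\}$ and the lifts $\{t\mapsto X_t\}$ of $\tau$-periodic ODE orbits) are pairwise disjoint closed sets, so the connectedness of $\omega(z)$ forces $\omega(z)$ to coincide with $\{\overline{E^*}\}$ or with the lift of a single $\tau$-periodic solution, which gives the announced alternative.

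The main obstacle is the adaptation of LaSalle's principle to the delay setting: producing the relative compactness of the orbit inside the infinite-dimensional phase space $\X$, ensuring that every point of $\omega(z)$ lies on a complete trajectory of \eqref{Eq:System} (so that the identity $X(t)=X(t-\tau)$ can be propagated in time), and then matching these trajectories with the phase portrait of the classical Lotka–Volterra ODE to isolate the two limit regimes.
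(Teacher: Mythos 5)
Your proposal is correct and follows essentially the same route as the paper: reduce $S_2$ to $S_3$ via Proposition \ref{Prop:Invariance}, invoke Lemmas \ref{Lemma:Pers} and \ref{Lemma:Bounded} together with the Lyapunov functional $L_*$ and LaSalle's invariance principle, extract $X(t)=X(t-\tau)$ from \eqref{Eq:Lyapunov}, and then classify the resulting invariant trajectories as either $\overline{E^*}$ or a $\tau$-periodic solution. The only differences are cosmetic: you make explicit the precompactness (Arzel\`a--Ascoli) that the paper delegates to the cited LaSalle theorem for functional differential equations, and you identify the limit via the reduced Lotka--Volterra phase portrait and connectedness of $\omega(z)$, whereas the paper differentiates the relation $X(t)=X(t+\tau)$ to deduce $y(t)=y(t+\tau)$ directly.
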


\begin{proof}
First, consider an initial condition $z\in S_3$. By Lemma \ref{Lemma:Pers}, there exists $\varepsilon>0$ such that
$$\Phi_t(z) \in S^\varepsilon_3, \quad \forall t\geq \tau.$$
Using Corollary \ref{Cor:Lyapunov} and Lemma \ref{Lemma:Bounded}, we know that $L_*$ is a Lyapunov function on $S^\varepsilon_3$ and $\Phi_t(z)$ is a bounded solution. Consequently to LaSalle invariance principle (see \cite[Theorem 5.3, p. 30]{Kuang93} or \cite[Theorem 5.17, p. 80]{Smith2010}), we conclude that $\omega(z)\neq\emptyset$ and is contained in the maximal invariant subset of
$$\left\{v\in S^\varepsilon_3: \dfrac{\partial}{\partial t}[L_*(\Phi_t(v))]=0, \quad \forall t\geq \tau\right\}.$$
We see that \eqref{Eq:Lyapunov} implies
\begin{equation}\label{Eq:Period}
X(t-\tau)=X(t), \quad \forall t\geq \tau,
\end{equation}
so $\omega(z)$ is included in
$$\{v\in S^\varepsilon_3: \Phi^X_t(v)=\Phi^X_{t+\tau}(v), \quad \forall t\geq \tau\},$$
where $\Phi^X$ is the first component of $\Phi$. Classical results (see e.g. \cite{Hale93}) imply that $X\in \Co ^1[\tau,\infty)$. Therefore we get
$$X'(t)=X'(t+\tau), \qquad \forall t\geq \tau,$$
which implies
$$\gamma_0 X(t)y(t)=\gamma_0 X(t+\tau)y(t+\tau), \qquad \forall t\geq \tau$$
hence
\begin{equation}\label{Eq:Conv_y}
y(t)=y(t+\tau), \qquad \forall t\geq \tau.
\end{equation}
Suppose that
$$X(t)=c\in (0,\infty) \quad \textnormal{for all} \ t\geq \tau.$$
Then \eqref{Eq:System} implies that
$$y(t)=y^* \quad \textnormal{for all} \ t\geq \tau$$
which leads to $c=X^*$, whence $\omega(z)=\{\overline{E^*}\}$ in this case.
Now, suppose that $X$ is a $\tau$-periodic function. Suppose also, by contradiction, that $y$ is not a $\tau$-periodic function. Using \eqref{Eq:Conv_y}, we would obtain that $y$ is constant and then that
$$X(t)=X^* \quad \text{for all} \ t\geq \tau$$
due to \eqref{Eq:System} which leads to a contradiction. Hence the result follows. \\
Now, consider an initial condition $z\in S_2$. Using Proposition \ref{Prop:Invariance} 2), we know that $\Phi_{3\tau}(z)\in S_3$. We can therefore use the proof above to get the same asymptotic result.
\end{proof}

\subsection{Existence of a $\tau$-periodic solution}

By means of the latter result, the convergence to a $\tau$-periodic function is a possible case. We now give a necessary and sufficient condition to get the existence of such periodic solution.

\begin{theorem}\label{Thm:NoPeriod}
There exists a $\tau$-periodic solution of \eqref{Eq:System} if and only if
\begin{equation}
\label{Eq:CondPeriod}
\dfrac{\tau\sqrt{\delta y^*\gamma_0}}{2\pi}>1
\end{equation}
holds. In this case, the solution is unique (in the sense that there is only one $\tau$-periodic orbit) and will be denoted by $(p,q)\in \Co ^1([\tau,\infty),\R^2_+)$ in all the following.
\end{theorem}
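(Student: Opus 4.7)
The plan is to observe that $\tau$-periodicity forces the delay term in \eqref{Eq:System} to collapse, reducing the problem to the classical Lotka-Volterra ODE, and then to read off the period condition from the well-known period function of that system. Concretely, if $(p,q)$ is a $\tau$-periodic solution of \eqref{Eq:System}, then $p(t-\tau)=p(t)$ for every $t\geq\tau$, and using the identity $\beta_0 e^{-\mu_0\tau}-\mu_0=\gamma_0 y^*$ that defines $y^*$, the system reduces to the classical Lotka-Volterra oscillator
\begin{equation*}
p'(t) = \gamma_0 p(t)\bigl(y^*-q(t)\bigr), \qquad q'(t) = \alpha\gamma_0 q(t)\bigl(p(t)-X^*\bigr).
\end{equation*}
Conversely, any closed orbit of this ODE whose minimal period divides $\tau$ furnishes a $\tau$-periodic solution of \eqref{Eq:System}. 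The task therefore reduces to: for which $\tau$ does the above ODE admit an orbit with $\tau$ as a period?

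Next, I would exploit the complete integrability of the reduced system. The function
\begin{equation*}
H(p,q) = \alpha X^* g\!\left(\frac{p}{X^*}\right) + y^* g\!\left(\frac{q}{y^*}\right),
\end{equation*}
with $g$ as in \eqref{Eq:G}, is a first integral: it is strictly convex on $(0,\infty)^2$, attains its minimum $0$ only at $(X^*,y^*)$, and its level sets $\{H=c\}$ for $c>0$ form a continuous one-parameter family of closed orbits encircling the center. Linearizing the reduced ODE at $(X^*,y^*)$ yields eigenvalues $\pm i\sqrt{\delta y^*\gamma_0}$ (consistent with \eqref{Eq:Lambda}), so the period $T(c)$ of the orbit $\{H=c\}$ satisfies $T(c)\to T_0:=2\pi/\sqrt{\delta y^*\gamma_0}$ as $c\to 0^+$ and $T(c)\to\infty$ as $c\to\infty$ (the orbit approaches the coordinate axes). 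A classical result on the Lotka-Volterra period (Rothe, Waldvogel) asserts moreover that $c\mapsto T(c)$ is continuous and strictly increasing, hence a bijection from $(0,\infty)$ onto $(T_0,\infty)$.

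Both directions then follow at once. For necessity, a $\tau$-periodic orbit corresponds to some $c>0$ with $\tau=n\,T(c)$ for an integer $n\geq 1$, forcing $\tau\geq T(c)>T_0$, which is exactly condition \eqref{Eq:CondPeriod}. For sufficiency, if $\tau>T_0$ the bijection produces a unique $c^*>0$ with $T(c^*)=\tau$, whose orbit supplies a $\tau$-periodic solution (with minimal period $\tau$); uniqueness of the $\tau$-periodic orbit is then an immediate consequence of the strict monotonicity of $T$. I expect the technical crux to be precisely this strict monotonicity of $T(c)$, which for the Lotka-Volterra oscillator is classical but delicate, typically obtained via a careful expansion of the period integral along the level curves of $H$; the reduction step and the limits of $T$ at the endpoints are routine.
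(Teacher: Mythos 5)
Your proposal is correct and follows essentially the same route as the paper: both reduce a $\tau$-periodic solution of \eqref{Eq:System} to the classical Lotka--Volterra ODE (since the delay term collapses), and both then invoke Rothe's result that the period is a strictly increasing function of the conserved energy, tending to $2\pi/\sqrt{\delta y^*\gamma_0}$ at the center and to $\infty$ on large orbits, to characterize when $\tau$ is attained as a period and to get uniqueness of the corresponding orbit. Your handling of the borderline case via the strict inequality $T(c)>T_0$ for $c>0$ is a minor streamlining of the paper's separate argument that $T=\tau$ with $\tau=T_0$ forces zero energy, but it is not a genuinely different proof.
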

Let us first remind some useful property about the classical Lotka-Volterra model.

\begin{lemma}\cite[Theorem 1]{Rothe85} \label{Lemma:Period}
The solution of 
\begin{equation}\label{Eq:LVmodel}
\left\{
\begin{array}{rcl}
x'(t)&=&ax(t)-bx(t)y(t), \\
y'(t)&=&cx(t)y(t)-dy(t), \\
(x(\tau),y(\tau))&=&(x_\tau,y_\tau)\in (0,\infty)^2,
\end{array}
\right.
\end{equation}
for every $t\geq \tau$, is periodic with some period $T$. Define the conserved energy $\E_{(x_\tau,y_\tau)}$ (through time) of \eqref{Eq:LVmodel} by
\begin{eqnarray}\label{Eq:E}
\E_{(x_\tau,y_\tau)}&=&cx_\tau-d+by_\tau-a-a\ln\left(\dfrac{by_\tau}{a}\right)-d\ln\left(\dfrac{cx_\tau}{d}\right)\nonumber \\
&=&dg\left(\dfrac{cx_\tau}{d}\right)+ag\left(\dfrac{by_\tau}{a}\right)\geq 0,
\end{eqnarray}
which depends on the initial condition. Then the period depends on $\E_{(x_\tau,y_\tau)}$ and moreover, the function $\E\mapsto T(\E)$ is strictly increasing with
$$\lim_{\E\to 0}T(\E)=\dfrac{2\pi}{\sqrt{ad}}, \qquad \lim_{\E\to \infty}T(\E)=\infty.$$
\end{lemma}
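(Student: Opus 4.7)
Since the lemma is attributed to Rothe, the strategy would be to reconstruct his argument. I would first exhibit a first integral of \eqref{Eq:LVmodel} and deduce periodicity, then parametrize orbits by the energy and analyze $T(\E)$. Differentiating $H(x,y) := cx - d\ln x + by - a\ln y$ along solutions gives $\dot H = (\dot x/x)(cx-d) + (\dot y/y)(by-a) = (a-by)(cx-d) + (cx-d)(by-a) = 0$, so $H$ is conserved. Subtracting its value at the equilibrium $(d/c,a/b)$ and using $z - 1 - \ln z = g(z)$ yields exactly $\E_{(x_\tau,y_\tau)} = dg(cx_\tau/d) + ag(by_\tau/a)$, with nonnegativity inherited from $g\ge 0$. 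The Hessian of $H$ is $\mathrm{diag}(d/x^2, a/y^2)$, so $H$ is strictly convex on $(0,\infty)^2$ with unique minimum at $(d/c,a/b)$; hence for every $\E>0$ the level set $\{H = H(d/c,a/b)+\E\}$ is a smooth simple closed curve on which the vector field does not vanish, forcing each trajectory on it to be periodic and $T$ to depend only on $\E$.

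For the limit $\E\to 0$, the linearization at $(d/c,a/b)$ is $\xi' = -(bd/c)\eta$, $\eta' = (ac/b)\xi$, which gives $\xi'' = -ad\,\xi$ and hence frequency $\sqrt{ad}$; a standard continuous-dependence argument (the nonlinear vector field is a $C^1$ perturbation of its linearization on any shrinking neighborhood) yields $T(\E)\to 2\pi/\sqrt{ad}$ as the orbit collapses to the equilibrium. For $\E\to\infty$, the level curves accumulate on the boundary of $(0,\infty)^2$: along a high-energy orbit one coordinate becomes arbitrarily small (say $x\ll 1$), during which $\dot y \approx -dy$, so $y$ decays exponentially and the arc length in $t$ needed to traverse this portion grows without bound. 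Writing $T(\E) = \oint dt$ and bounding the integrand near the minimum of $x$ on the orbit gives $T(\E)\to\infty$.

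The truly nontrivial part, and the main obstacle, is strict monotonicity of $\E\mapsto T(\E)$. My plan would be to pass to symmetric logarithmic coordinates $u = \ln(cx/d)$, $v = \ln(by/a)$, in which the system reads $\dot u = -a(e^v - 1)$, $\dot v = d(e^u - 1)$, i.e.\ a genuine Hamiltonian system with Hamiltonian $K(u,v) = d(e^u - 1 - u) + a(e^v - 1 - v) = \E$. The level curves $\{K = \E\}$ are symmetric with respect to each axis $u=0$ and $v=0$, so the period integral $T(\E) = \oint dt$ decomposes into four comparable arcs that can be parametrized by one coordinate, with the other given implicitly via $K = \E$. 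Substituting this parametrization produces a one-dimensional integral whose derivative can be computed by differentiating under the integral sign; an integration by parts exploiting the strict convexity of $s\mapsto e^s - 1 - s$ eventually yields a manifestly positive integrand for $T'(\E)$. Handling the boundary terms (which occur at the turning points of the orbit, where the parametrization becomes singular) and getting the correct signs in this computation is the technical crux of the argument. It relies on the specific algebraic structure of the Hamiltonian rather than on general convexity of $H$ alone, which is why the much softer parts of the lemma (periodicity and the two limits) do not by themselves imply the monotonicity claim.
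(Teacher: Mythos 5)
This lemma is not proved in the paper at all: it is imported verbatim as \cite[Theorem 1]{Rothe85}, so there is no internal proof to compare against. Judged on its own terms, the soft parts of your reconstruction are correct and standard: the computation $\dot H=0$ for $H(x,y)=cx-d\ln x+by-a\ln y$, the identification $H(x_\tau,y_\tau)-H(d/c,a/b)=dg(cx_\tau/d)+ag(by_\tau/a)=\E_{(x_\tau,y_\tau)}\geq 0$, the compactness of the level sets of the strictly convex $H$ (hence periodicity and the fact that $T$ depends only on $\E$), and the linearization giving $\xi''=-ad\,\xi$ and the limit $2\pi/\sqrt{ad}$ as $\E\to 0$. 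The $\E\to\infty$ limit is only heuristic as written (you would need to quantify the time spent on the arc where $x$ is small, e.g.\ via the logarithm of the ratio of the extreme values of $y$ on that arc), but it is salvageable.

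The genuine gap is in the monotonicity argument, which you correctly identify as the crux but whose proposed route starts from a false premise. In the coordinates $u=\ln(cx/d)$, $v=\ln(by/a)$ the Hamiltonian is $K(u,v)=d(e^u-1-u)+a(e^v-1-v)$, and the level curves $\{K=\E\}$ are \emph{not} symmetric about $u=0$ or $v=0$, because $s\mapsto e^s-1-s$ is not even; the curve extends much further into $u<0$ than into $u>0$ at equal energy. So the decomposition into four ``comparable'' arcs collapses, and the subsequent ``differentiate under the integral sign, integrate by parts, manifestly positive integrand'' is a hope rather than an argument --- indeed the whole difficulty of the known proofs is precisely managing the asymmetry between the two inverse branches of $e^s-1-s$. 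Rothe's actual proof proceeds via the Legendre transform of the Hamiltonian and complete monotonicity of an associated Laplace-type integral, and Waldvogel's alternative proof carefully combines the contributions of the two branches so that their sum (not each piece separately) is monotone. As it stands, your proposal establishes everything in the lemma except the strict monotonicity of $\E\mapsto T(\E)$, which is the one ingredient the paper actually uses in the proof of Theorem \ref{Thm:NoPeriod}; for that part, citing Rothe or Waldvogel (as the authors do) remains necessary.
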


\begin{proof}(Theorem \ref{Thm:NoPeriod}.)
If $(X,y)$ is a $\tau$-periodic solution of \eqref{Eq:System}, then it is actually solution of 
\begin{equation}\label{Eq:SystemSimp}
\left\{
\begin{array}{rcl}
X'(t)&=&\left(\beta_0 e^{-\mu_0 \tau}-\mu_0\right)X(t)-\gamma_0 X(t)y(t), \\
y'(t)&=& \alpha \gamma_0 X(t)y(t)-\delta y(t),
\end{array}
\right.
\end{equation}
for any $t\geq \tau$. Suppose that
$$\dfrac{\tau\sqrt{\delta y^*\gamma_0}}{2\pi}<1.$$
Using Lemma \ref{Lemma:Period}, for each initial condition, the solution is periodic with some period $T$. Since the period is strictly increasing, it must satisfy
$$T\geq \dfrac{2\pi}{\sqrt{(\beta_0 e^{-\mu_0\tau}-\mu_0)\delta}}=\dfrac{2\pi}{\sqrt{\delta \gamma_0 y^*}}>\tau,$$
which is absurd. If
$$\dfrac{\tau\sqrt{\delta y^*\gamma_0}}{2\pi}=1,$$
then to get $T=\tau$, one needs to have $\E_{(x_\tau,y_\tau)}=0$. Using \eqref{Eq:E}, we get
$$x_\tau=\dfrac{d}{c}, \quad y_\tau=\dfrac{a}{b}$$
which is equivalent, for \eqref{Eq:SystemSimp}, to
$$x_\tau=X^*, \quad y_\tau=y^*,$$
so the solution is actually constant and the first implication is thus proved.

Conversely, suppose that $\eqref{Eq:CondPeriod}$ is satisfied. Using Lemma \ref{Lemma:Period}, there is a unique energy $\E^*>0$ such that
$$T(\E^*)=\tau.$$
Moreover, using \eqref{Eq:E}, we can see that there is at least one initial condition $(x_1,y_1)\in \R^2$ such that
$$\mathcal{E}_{(x_1,y_1)}=\E^*.$$
Thus, there is at least one $\tau$-periodic solution of \eqref{Eq:SystemSimp} (denoted by $(p,q)$). Besides, every initial condition $(x_2,y_2)$ that satisfies
$$\E_{(x_2,y_2)}=\E^*$$
belongs to 
$$\bigcup_{s\in[\tau,2\tau]}\{(p(s),q(s))\}.$$ Consequently, there is a unique $\tau$-periodic solution $(p,q)\in \Co^1([\tau,\infty), \R_+^2)$ up to a phase shift, of \eqref{Eq:SystemSimp}. We finally see that $(p,q)$ is also solution of \eqref{Eq:System}, which ends the proof.
\end{proof}

\begin{remark} We can note that the $\tau$-periodic solution of \eqref{Eq:System} is linked with the existence of two purely imaginary roots (given by \eqref{Eq:Lambda}) of the characteristic equation \eqref{Eq:LocalStab}, whenever 
$$\dfrac{\tau\sqrt{\delta y^* \gamma_0}}{2\pi}=1$$
holds. Indeed, in this case, the condition \eqref{Eq:Cond_Imag} is satisfied.\end{remark}

We can now be more precise about the attractive set of the solutions.

\begin{proposition}\label{Prop:Attract-Set}
Consider an initial condition $z\in S_2$. 
\begin{enumerate}
\item If \eqref{Eq:CondPeriod} does not hold, then
\begin{equation}\label{Eq:Attrac_NoPer}
\omega(z)=\{\overline{E^*}\}.
\end{equation}
\item If \eqref{Eq:CondPeriod} holds then
\begin{equation}\label{Eq:Attrac_Per}
\omega(z)\subset \{\overline{E^*}\}\cup S_\tau,
\end{equation}
\end{enumerate}
where $S_\tau\subset S_3$ is the (periodic) positively invariant subset of $S_3$ defined by
\begin{equation}\label{Eq:S_tau}
\begin{array}{rcl}
&& S_\tau:=\left\{(\phi,y)\in \X_+ : \ \exists h\in[0,\tau], \phi(\cdot)=p(\cdot+\tau+h), \right. \left. y=q(\tau+h)\right\}.
\end{array}
\end{equation}
\end{proposition}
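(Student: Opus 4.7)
My plan is to read the proposition off from Theorems \ref{Thm:PeriodConv} and \ref{Thm:NoPeriod}; no further dynamical input is required.

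For assertion (1), I would assume \eqref{Eq:CondPeriod} fails. Theorem \ref{Thm:NoPeriod} then rules out the existence of any $\tau$-periodic solution of \eqref{Eq:System}. Applying Theorem \ref{Thm:PeriodConv} to $z \in S_2$, the only surviving alternative in its dichotomy is convergence to $E^*$, i.e.\ $X(t) \to X^*$ and $y(t) \to y^*$. In the state space $\X$ this translates into $\Phi_t(z) \to \overline{E^*}$, whence $\omega(z) = \{\overline{E^*}\}$.

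For assertion (2), I would assume \eqref{Eq:CondPeriod} holds, so that by Theorem \ref{Thm:NoPeriod} there is a unique $\tau$-periodic orbit $(p,q)$ of \eqref{Eq:System}, up to phase. The key bookkeeping step is to identify the collection of $\X$-states swept out by this orbit with the set $S_\tau$. Parameterizing time as $t = \tau + h$ for $h \in [0,\tau]$, the state at time $t$ along $(p,q)$ is $(X_t, y(t))$ with $X_t(\theta) = p(t+\theta-\tau) = p(\theta + h)$ and $y(t) = q(\tau+h)$. Using the $\tau$-periodicity of $p$ to rewrite $p(\theta + h) = p(\theta + \tau + h)$ recovers exactly the description \eqref{Eq:S_tau}. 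Positive invariance of $S_\tau$ under $\Phi$ is then immediate, because evolution by a time increment simply advances the phase $h$ modulo $\tau$, and the inclusion $S_\tau \subset S_3$ follows from the strict positivity of the classical Lotka--Volterra periodic trajectory in both components.

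To conclude (2), I combine these observations with Theorem \ref{Thm:PeriodConv}: for $z \in S_2$ the $\omega$-limit set either equals $\{\overline{E^*}\}$ or consists of a $\tau$-periodic orbit, which by the uniqueness just established must coincide with $S_\tau$. In either situation $\omega(z) \subset \{\overline{E^*}\} \cup S_\tau$. The heavy analytical work (the Lyapunov--LaSalle argument and the period-versus-energy monotonicity of Lemma \ref{Lemma:Period}) is already packaged in the earlier theorems, so no genuine obstacle remains; the only subtle point is matching the parameterization appearing in \eqref{Eq:S_tau} with the state-space convention $X_t(\theta) = X(t + \theta - \tau)$ via the $\tau$-periodicity of $p$.
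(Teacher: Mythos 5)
Your argument is correct and is exactly the route the paper takes: its proof of this proposition is the one-line remark that the result follows from Theorems \ref{Thm:PeriodConv} and \ref{Thm:NoPeriod}, and you have simply supplied the bookkeeping (ruling out the periodic alternative in case (1), and in case (2) identifying the states swept out by the unique periodic orbit with $S_\tau$ via the convention $X_t(\theta)=X(t+\theta-\tau)$ and $\tau$-periodicity). No gaps.
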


\begin{proof}
The result follows from Theorem \ref{Thm:PeriodConv} and Theorem \ref{Thm:NoPeriod}.
\end{proof}

\subsection{Lyapunov stability}
Here we handle the behavior of the solutions around the non trivial equilibrium. We know by Theorem \ref{Thm:LocStab} that $E^*$ is locally asymptotically stable when \eqref{Eq:Cond_Imag} does not hold. We can now be more precise:
\begin{proposition}\label{Prop:Stability}
The equilibrium $E^*$ is Lyapunov stable.
\end{proposition}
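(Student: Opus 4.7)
The plan is to establish Lyapunov stability by exploiting the function $L_*$ constructed in Proposition \ref{Prop:Energ}, using the standard Lyapunov stability argument adapted to the fact that $\X$ carries the sup norm. The three ingredients I would use are: (i) $L_*$ is continuous at $\overline{E^*}$ with $L_*(\overline{E^*})=0$ since $g(1)=0$; (ii) along every orbit issued from $S_3$, the first item of Proposition \ref{Prop:Invariance} gives positive invariance and Proposition \ref{Prop:Energ} yields the monotonicity $L_*(\Phi_t(z))\leq L_*(z)$ for all $t\geq\tau$; (iii) the pointwise summands $\alpha X^*\,g(X(t)/X^*)$ and $y^*\,g(y(t)/y^*)$ inside $L_*(\Phi_t(z))$ provide pointwise control of $X(t)$ and $y(t)$ at every time.

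First I would fix $\varepsilon\in(0,\min(X^*,y^*))$ and pick $\eta\in(0,\varepsilon)$. For any $z=(\phi,y_\tau)\in\X_+$ with $\|z-\overline{E^*}\|_\X\leq\eta$, one has $\phi(s)\geq X^*-\eta>0$ on $[0,\tau]$ and $y_\tau\geq y^*-\eta>0$, so $z\in S_3$. By continuity of $g$ at $1$ (together with dominated convergence for the integral summand of $L_*$), there exists a nondecreasing function $\sigma:\R_+\to\R_+$ with $\sigma(\eta)\to 0$ as $\eta\to 0$ such that
\begin{equation*}
\|z-\overline{E^*}\|_\X\leq\eta\quad\Longrightarrow\quad L_*(z)\leq\sigma(\eta).
\end{equation*}
Combining this with the monotonicity of $L_*$ along the orbit yields $L_*(\Phi_t(z))\leq\sigma(\eta)$ for every $t\geq\tau$.

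Next I would use that $g\geq 0$ attains its unique minimum $0$ at $1$ and blows up at $0$ and $\infty$: there exists a continuous function $\varpi:\R_+\to\R_+$ with $\varpi(0)=0$ such that $g(r)\leq c$ implies $|r-1|\leq\varpi(c)$. Applied to the first and second summands of $L_*(\Phi_t(z))$, this yields the pointwise bounds
\begin{equation*}
|X(t)-X^*|\leq X^*\varpi\!\bigl(\sigma(\eta)/(\alpha X^*)\bigr),\qquad |y(t)-y^*|\leq y^*\varpi\!\bigl(\sigma(\eta)/y^*\bigr)
\end{equation*}
for every $t\geq\tau$. To upgrade the pointwise control of $X$ to a sup-norm control of $X_t-X^*\mathbf{1}_{[0,\tau]}$, I would write $X_t(\theta)=X(t+\theta-\tau)$ for $\theta\in[0,\tau]$ and split: when $t+\theta-\tau\geq\tau$ the pointwise bound above applies, and when $t+\theta-\tau\in[0,\tau]$ we use the initial-data bound $|\phi(s)-X^*|\leq\eta$. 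This gives, for all $t\geq\tau$,
\begin{equation*}
\|\Phi_t(z)-\overline{E^*}\|_\X\leq\max\!\bigl(\eta,\,X^*\varpi(\sigma(\eta)/(\alpha X^*))\bigr)+y^*\varpi\!\bigl(\sigma(\eta)/y^*\bigr),
\end{equation*}
whose right-hand side tends to $0$ as $\eta\to 0$; choosing $\eta$ small enough that it is bounded by $\varepsilon$ completes the argument.

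The main obstacle I expect is precisely the mismatch between the $L^1$-type integral summand of $L_*$ and the sup-norm topology on $\X$: a bound on $L_*(\Phi_t(z))$ alone gives only integral control of $\phi(s)/X^*-1$, not uniform control. The workaround is that the pointwise summand $\alpha X^*\,g(X(t)/X^*)$ is present in $L_*$ and, since the Lyapunov bound $L_*(\Phi_t(z))\leq\sigma(\eta)$ holds at \emph{every} $t\geq\tau$, one obtains uniform-in-$t$ pointwise control of $X(t)$ that, combined with the initial-data bound on the window $[0,\tau]$ needed to cover the short times $t\in[\tau,2\tau]$, converts into the sup-norm estimate on $X_t$.
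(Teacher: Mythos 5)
Your proposal is correct and follows essentially the same route as the paper: it packages Lemma \ref{Lemma:LevelSet_2} as the modulus $\sigma$ (small initial deviation implies small Lyapunov value), Lemma \ref{Lemma:LevelSet_1} as the modulus $\varpi$ (small Lyapunov value forces $(X(t),y(t))$ close to $E^*$ via the pointwise summands of $L_*$), and then uses the monotonicity of $F_z$ together with the initial window $[0,\tau]$ to pass from pointwise to sup-norm control of $X_t$, exactly as in the paper's proof. The only cosmetic difference is that you work with explicit moduli of continuity rather than the qualitative $\varepsilon$--$\eta$ statements of the two lemmas, and you make explicit the (needed) observation that $\eta<\min(X^*,y^*)$ places the initial condition in $S_3$ so that Proposition \ref{Prop:Energ} applies.
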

To prove this result, we need to define the following sets
$$L_\eta=\{(\phi,y)\in \X_+: L_*(\phi,y)<\eta\}, \quad \eta >0,$$
$$B(E^*,\rho)=\{(w,y)\in \R^2: \|(w,y)-E^*\|_{\R^2}\leq \rho\}, \quad \rho>0,$$
where $\|(w,y)\|_{\R^2}=|w|+|y|$, for any $(w,y)\in \R^2$;
$$B(\overline{E^*},\rho)=\{(\phi,y)\in \X_+: \|(\phi,y)-\overline{E^*}\|_{\X}\leq \rho\}, \quad \rho>0,$$
and we give two lemmas (see \cite[Proof of Theorem 1.2]{Gabriel2012} for the idea of such results).

\begin{lemma}\label{Lemma:LevelSet_1}
For every $\rho>0$, there exists $\eta>0$ such that $(\phi,y)\in L_\eta \Rightarrow (\phi(\tau),y)\in B(E^*,\rho)$.
\end{lemma}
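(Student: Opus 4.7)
The plan is to exploit the decomposition $L_*=V_1+V_2+V_3$ into three \emph{nonnegative} terms (each is a positive constant times $g$ evaluated at a positive argument, and $g\geq 0$ with $g(1)=0$ its unique minimum). If $(\phi,y)\in L_\eta$, then in particular
\[
\alpha X^* g\!\left(\frac{\phi(\tau)}{X^*}\right)=V_1(\phi,y)<\eta, \qquad y^* g\!\left(\frac{y}{y^*}\right)=V_2(\phi,y)<\eta,
\]
so it suffices to control $\phi(\tau)/X^*$ and $y/y^*$ from the smallness of $g$ evaluated there.

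The key analytic fact is that $g(x)=x-\ln x-1$ is strictly decreasing on $(0,1]$, strictly increasing on $[1,\infty)$, with $g(1)=0$ and $\lim_{x\to 0^+}g(x)=\lim_{x\to\infty}g(x)=+\infty$. Consequently, for every $\varepsilon>0$ the sublevel set $g^{-1}([0,\varepsilon])$ is a compact interval $[a_\varepsilon,b_\varepsilon]\ni 1$, and these intervals shrink to $\{1\}$ as $\varepsilon\downarrow 0$. Equivalently, for every $\varepsilon'>0$ there exists $\delta>0$ such that $g(x)<\delta$ implies $|x-1|<\varepsilon'$. I would state and use this as a short preliminary observation.

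Given $\rho>0$, I then apply this observation twice: choose $\delta_1>0$ so that $g(x)<\delta_1$ forces $|x-1|<\rho/(2X^*)$, and $\delta_2>0$ so that $g(x)<\delta_2$ forces $|x-1|<\rho/(2y^*)$. Setting
\[
\eta := \min\bigl(\alpha X^*\delta_1,\; y^*\delta_2\bigr),
\]
the hypothesis $(\phi,y)\in L_\eta$ yields $g(\phi(\tau)/X^*)<\delta_1$ and $g(y/y^*)<\delta_2$, hence $|\phi(\tau)-X^*|<\rho/2$ and $|y-y^*|<\rho/2$. Adding the two inequalities gives $\|(\phi(\tau),y)-E^*\|_{\R^2}<\rho$, which is precisely the statement $(\phi(\tau),y)\in B(E^*,\rho)$.

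There is no real obstacle here: the proof is a purely static fact about $L_*$ (the dynamics and $V_3$ play no role beyond $V_3\geq 0$). The only thing that requires a moment's care is noting that the control on $\phi(\tau)$ comes solely from $V_1$, so the presence of the integral term $V_3$ does not interfere; indeed, dropping $V_3$ is what makes the bound possible, since otherwise one would need uniform information on $\phi$ over $[0,\tau]$, which is not what the lemma claims.
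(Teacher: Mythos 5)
Your proof is correct and follows essentially the same route as the paper: discard $V_3\geq 0$, bound $V_1$ and $V_2$ by $\eta$, and use that $g$ vanishes only at $1$ with $g\to\infty$ at $0^+$ and $+\infty$ to force $\phi(\tau)$ and $y$ close to $X^*$ and $y^*$. Your version is in fact slightly cleaner, since you make the uniform statement about the sublevel sets of $g$ explicit and produce $\eta$ directly, where the paper argues by a somewhat informal limit as $\eta\to 0$.
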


\begin{proof}
Let $\rho>0, \eta>0$ and $(\phi_\eta,y_\eta)\in L_\eta$. We have $L_*(\phi_\eta,y_\eta)<\eta$ so
$$V_1(\phi_\eta, y_\eta)<\eta, \qquad V_2(\phi_\eta, y_\eta)<\eta,$$
and
$$g\left(\dfrac{\phi_\eta(\tau)}{X^*}\right)<\dfrac{\eta}{\alpha X^*}, \qquad g\left(\dfrac{y_\eta}{y^*}\right)<\dfrac{\eta}{y^*}.$$
Since $g$ is nonnegative then
$$\lim_{\eta \to 0} \ g\left(\dfrac{\phi_\eta(\tau)}{X^*}\right)=0, \qquad \lim_{\eta \to 0} \ g\left(\dfrac{y_\eta}{y^*}\right)=0,$$
and, since $g$ is zero only at $1$, we obtain
$$\lim_{\eta\to 0} \ \phi_\eta(\tau)=X^*, \qquad \lim_{\eta \to 0} \ y_\eta =y^*.$$
By considering $\eta>0$ small enough we get $\|(\phi_\eta(\tau),y_\eta)-E^*\|_{\R^2}\leq \rho$ and $(\phi_\eta(\tau),y_\eta)\in B(E^*,\rho)$.
\end{proof}

\begin{lemma}\label{Lemma:LevelSet_2}
For every $\eta>0$, there exists $\rho>0$ such that $ B(\overline{E^*},\rho)\subset L_\eta$.
\end{lemma}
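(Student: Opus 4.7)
The goal is to show that $L_*$ is continuous at $\overline{E^*}$ with $L_*(\overline{E^*})=0$, so that small balls around $\overline{E^*}$ in $\X$ map into arbitrarily small sublevel sets of $L_*$.

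First I would observe that $g(1)=0$, hence $L_*(\overline{E^*})=0$, since each of $V_1, V_2, V_3$ evaluated at $\overline{E^*}$ involves $g$ applied to the constant $1$. Next, the key ingredient is that $g$ is continuous on $(0,\infty)$ with $g(1)=0$, so for any $\varepsilon>0$ there exists $\delta_\varepsilon\in (0,1/2)$ such that $|x-1|\leq \delta_\varepsilon$ implies $g(x)\leq \varepsilon$.

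Given $\eta>0$, set
$$
K := \alpha X^* + y^* + \alpha \beta_0 e^{-\mu_0 \tau}X^* \tau,
$$
choose $\varepsilon := \eta/(2K)$, then the associated $\delta_\varepsilon$, and finally pick
$$
\rho := \tfrac{1}{2}\min\bigl(\delta_\varepsilon X^*,\ \delta_\varepsilon y^*\bigr).
$$
For $(\phi,y)\in B(\overline{E^*},\rho)$ we have $\|\phi - X^*\mathbf{1}_{[0,\tau]}\|_\infty + |y-y^*|\leq \rho$, so $\phi(s)/X^* \in [1-\delta_\varepsilon,1+\delta_\varepsilon]$ uniformly in $s\in [0,\tau]$ and $y/y^* \in [1-\delta_\varepsilon,1+\delta_\varepsilon]$. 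In particular the arguments of $g$ stay in a compact subset of $(0,\infty)$ bounded away from zero, so $L_*$ is well-defined there, and $g(\phi(s)/X^*)\leq \varepsilon$ and $g(y/y^*)\leq \varepsilon$. Substituting into the definitions yields
$$
V_1 \leq \alpha X^*\varepsilon, \qquad V_2 \leq y^*\varepsilon, \qquad V_3 \leq \alpha \beta_0 e^{-\mu_0 \tau}X^*\tau\,\varepsilon,
$$
whence $L_*(\phi,y) \leq K\varepsilon = \eta/2 < \eta$, i.e. $(\phi,y)\in L_\eta$.

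There is no real obstacle here; the argument is essentially continuity of $L_*$ at the equilibrium combined with the fact that the sup-norm controls the pointwise deviation $\phi(s)/X^*$ uniformly in $s$, which handles the integral term $V_3$ without any extra work.
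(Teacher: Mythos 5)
Your proof is correct and follows essentially the same route as the paper: both arguments amount to showing that $L_*$ is continuous at $\overline{E^*}$ with $L_*(\overline{E^*})=0$, so that a small ball in the sup-norm lands in the sublevel set $L_\eta$. Your quantified version with an explicit $\delta_\varepsilon$ is in fact slightly cleaner than the paper's limit-based phrasing, since the uniform bound $|\phi(s)/X^*-1|\le\delta_\varepsilon$ for all $s\in[0,\tau]$ makes the passage to the integral term $V_3$ explicit rather than implicit.
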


\begin{proof}
Let $\eta>0, \rho>0$ and $(\phi_\rho,y_\rho)\in B(\overline{E^*}, \rho)$, then
$\|(\phi_\rho,y_\rho)-\overline{E^*}\|_\X\leq \rho$ so we get
$$\|\phi_\rho-X^*\mathbf{1}_{[0,\tau]}\|_\infty\leq \rho, \qquad |y_\rho-y^*|\leq \rho.$$
Consequently we have
$$\lim_{\rho \to 0}y_\rho=y^*, \qquad \lim_{\rho \to 0}\phi_\rho(s)=X^*, \ \forall s\in[0,\tau],$$
and then
$$\lim_{\rho \to 0} \ g\left(\dfrac{y_\rho}{y^*}\right)=0, \quad \lim_{\rho \to 0} \ g\left(\dfrac{\phi_\rho(s)}{X^*}\right)=0, \ \forall s\in[0,\tau].$$
Consequently 
$$\lim_{\rho \to 0}V_1(\phi_\rho,y_\rho)=0, \quad \lim_{\rho \to 0}V_2(\phi_\rho,y_\rho)=0, \quad \lim_{\rho \to 0}V_3(\phi_\rho,y_\rho)=0.$$
So, considering $\rho>0$ small enough, we get $L_*(\phi_\rho,y_\rho)\leq \eta$.
\end{proof}

\begin{proof}(Proposition \ref{Prop:Stability}.)
Let $\rho_1>0$. Using Lemma \ref{Lemma:LevelSet_1}, there exists $\eta>0$ such that
$$(\phi,y)\in L_\eta \Rightarrow (\phi(\tau),y)\in B(E^*,\rho_1)$$
and using Lemma \ref{Lemma:LevelSet_2}, there exists $\rho_2>0$ such that 
$$B(\overline{E^*},\rho_2)\subset L_\eta.$$
Let $(\phi,y)\in B(\overline{E^*}, \rho_2)$, then $(\phi,y)\in L_\eta$ so $(\phi(\tau),y)\in B(E^*,\rho_1)$. Since $F_{(\phi,y)}$ is nonincreasing, then $L_\eta$ is positively invariant, which implies
$$(\Phi^X_t(\phi,y)(\tau),\Phi^y_t(\phi,y))\in B(E^*, \rho_1), \ \forall t\geq \tau$$
where $\Phi^y$ is the second component of $\Phi$, so that
$$(X(t),y(t))\in B(E^*,\rho_1), \ \forall t\geq \tau.$$
Consequently
$$|X(t)-X^*|+|y(t)-y^*|\leq \rho_1, \ \forall t\geq \tau.$$
Since $(\phi,y)\in B(\overline{E^*},\rho_2)$, then we have
$$\|\phi-X^*\mathbf{1}_{[0,\tau]}\|_\infty+|y-y^*|\leq \rho_2.$$
Considering $\rho_2>0$ small enough, that satisfies $\rho_2\leq \rho_1$, leads to
$$\|X_t-X^*\mathbf{1}_{[0,\tau]}\|_\infty+|y(t)-y^*|\leq \rho_1, \ \forall t\geq \tau$$
that is
$$\|(X_t,y(t))-\overline{E^*}\|_\X\leq \rho_1, \ \forall t\geq \tau$$
so
$$\Phi_t(\phi,y)\in B(\overline{E^*},\rho_1), \ \forall t\geq \tau.$$
We have finally shown that $E^*$ is Lyapunov stable, since for every $\rho_1>0$ there exists $\rho_2>0$ such that 
$$(\phi,y)\in B(\overline{E^*},\rho_2) \Rightarrow \Phi_t(\phi,y)\in B(\overline{E^*},\rho_1), \ \forall t\geq \tau.$$
\end{proof}

\subsection{Asymptotic behavior in absence of periodic solution}

In absence of $\tau$-periodic solution,  i.e. \eqref{Eq:CondPeriod} does not hold, the behavior of the solutions is given by the following theorem:

\begin{theorem}\label{Thm:Conv_NoPeriod}
If \eqref{Eq:CondPeriod} is not satisfied, then $E^*$ is globally asymptotically stable in $S_2$.
\end{theorem}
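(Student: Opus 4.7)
The plan is to combine the Lyapunov stability of $E^*$ already obtained in Proposition \ref{Prop:Stability} with a global attractiveness result on $S_2$, the latter being the only point left to prove. Since Proposition \ref{Prop:Attract-Set}, case 1, applied under the negation of \eqref{Eq:CondPeriod}, already yields $\omega(z)=\{\overline{E^*}\}$ for every $z\in S_2$, the real task reduces to upgrading this $\omega$-limit identification into the genuine $\X$-norm convergence
\[
\lim_{t\to\infty}\|\Phi_t(z)-\overline{E^*}\|_\X=0.
\]

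To perform this upgrade, I would first establish precompactness of the orbit $\mathcal{O}_z=\{\Phi_t(z):t\geq \tau\}$ in $\X$. Lemma \ref{Lemma:Bounded} provides a uniform bound $X(t),y(t)\leq C(z)$, so both components of the orbit remain in a bounded set. Re-injecting these bounds into the first equation of \eqref{Eq:System} yields a uniform bound on $X'$, and hence the family $\{X_t\}_{t\geq \tau}$ is uniformly bounded and equicontinuous in $\Co([0,\tau],\R)$. Arzelà-Ascoli then gives precompactness of $\{X_t\}$ in $\Co([0,\tau],\R)$, and combined with boundedness of $y(t)$ in $\R$, the precompactness of the whole orbit in $\X$.

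With precompactness and $\omega(z)=\{\overline{E^*}\}$ in hand, convergence follows by a standard contradiction argument: if $\Phi_t(z)$ did not converge to $\overline{E^*}$, then along some sequence $t_n\to\infty$ we would have $\|\Phi_{t_n}(z)-\overline{E^*}\|_\X\geq \varepsilon$; precompactness allows extraction of a subsequence converging to some $w\in\X$, which must lie in $\omega(z)$ by definition, forcing $w=\overline{E^*}$ and contradicting the lower bound. This yields the desired convergence, and combined with Proposition \ref{Prop:Stability}, it concludes the global asymptotic stability of $E^*$ in $S_2$.

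I do not anticipate a serious obstacle: the technical content is already packaged in Propositions \ref{Prop:Attract-Set} and \ref{Prop:Stability}, and what remains is the classical ``precompact orbit plus singleton $\omega$-limit implies convergence'' argument, for which the delayed equation supplies all required uniform bounds. At worst, one should be mildly careful that an initial datum $z\in S_2$ (not necessarily in $S_3$) forces first invoking Proposition \ref{Prop:Invariance} to push the trajectory into $S_3$ after time $3\tau$ before calling on the Lyapunov machinery; but this is exactly the reduction already carried out at the end of the proof of Theorem \ref{Thm:PeriodConv}, so it transfers verbatim.
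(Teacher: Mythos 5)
Your proposal is correct and follows essentially the same route as the paper, which likewise deduces the theorem directly from Proposition \ref{Prop:Attract-Set} (giving $\omega(z)=\{\overline{E^*}\}$ for every $z\in S_2$) combined with the Lyapunov stability of Proposition \ref{Prop:Stability}. The paper's one-line proof leaves implicit the passage from the singleton $\omega$-limit set to norm convergence (stability of $E^*$ together with $\overline{E^*}\in\omega(z)$ already forces convergence), whereas you fill it in via orbit precompactness and Arzelà--Ascoli; both versions are valid.
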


\begin{proof}
We know that \eqref{Eq:Attrac_NoPer} holds for every $z\in S_2$. Consequently, the global stability of $\overline{E^*}$ (and $E^*$) in the basin $S_2$, when \eqref{Eq:CondPeriod} does not hold, is just a consequence of Proposition \ref{Prop:Stability}.
\end{proof}

\subsection{Asymptotic behavior in presence of a periodic solution}

Let us suppose now that there exists a $\tau$-periodic solution, i.e. that \eqref{Eq:CondPeriod} holds. In this case, we already know that
$$\omega(z)\subset \{\overline{E^*}\}\cup S_{\tau}, \qquad \forall z\in S_2$$
where $S_{\tau}$ is defined by \eqref{Eq:S_tau}. We start by proving the global asymptotic stability of $\overline{E^*}$ in a subset of $S_2\setminus S_{\tau}$.

\begin{remark}\label{Rem:Eps_tau}
Using Theorem \ref{Thm:NoPeriod}, we know that, in this case, there is a unique nonconstant $\tau$-periodic solution, up to a phase shift, $(p,q)\in \Co ^1([\tau,\infty),\R^2_+)$ for \eqref{Eq:System}. Let $(\overline{p},\overline{q})\in \X_+$ be defined by
$$\overline{p}(s)=p(s+\tau), \quad \forall s\in[0,\tau], \quad \overline{q}=q(\tau).$$
It is clear that $(\overline{p}, \overline{q})\in S_{\tau}$ and that $\Phi_t(\overline{p}, \overline{q})\in S_{\tau}$, $\forall t\geq \tau$. Moreover the following equivalence holds true, by \eqref{Eq:S_tau}:
$$(\phi,y)\in S_{\tau}\Longleftrightarrow \exists \ h\in[\tau,2\tau]: \Phi_h(\phi,y)=(\overline{p}, \overline{q}).$$
We then define the (constant) energy for the periodic function by $\E_\tau :=F_{(\overline{p},\overline{q})}(\tau)$, i.e.
\begin{equation*}
\E_\tau=\alpha X^*g\left(\dfrac{\overline{p}(\tau)}{X^*}\right)+y^*g\left(\dfrac{\overline{q}}{y^*}\right)+\alpha \beta_0 e^{-\mu_0 \tau}X^* \int_0^\tau g\left(\dfrac{\overline{p}(s)}{X^*}\right)ds\in(0,\infty)
\end{equation*}
and we deduce that
$$S_{\tau}\subset \{z\in S_2: L_*(z)=\E_{\tau}\}.$$
\end{remark}

\begin{proposition}\label{Prop:GAS_part}
If \eqref{Eq:CondPeriod} is satisfied, then $E^*$ is globally asymptotically stable in
$$\{z\in S_2 : L_*(z)\leq \E_\tau\}\setminus S_\tau.$$
\end{proposition}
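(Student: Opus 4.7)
The plan is to combine Proposition \ref{Prop:Attract-Set}, which already gives $\omega(z)\subset \{\overline{E^*}\}\cup S_\tau$ for every $z\in S_2$, with the monotonicity of $L_*$ along orbits, in order to rule out the possibility that $\omega(z)$ meets $S_\tau$ whenever $L_*(z)\leq \E_\tau$ and $z\notin S_\tau$. Observe first that any $z$ satisfying $L_*(z)\leq \E_\tau<\infty$ must in fact lie in $S_3$, since $L_*$ blows up on $\X_+\setminus S_3$ by \eqref{Eq:Expl-Lyap}; hence by Proposition \ref{Prop:Energ} the map $t\mapsto L_*(\Phi_t(z))$ is well-defined and nonincreasing, so the limit $\ell:=\lim_{t\to\infty} L_*(\Phi_t(z))\in[0,\E_\tau]$ exists.

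Fix such a $z$. Using Lemma \ref{Lemma:Pers} together with the continuity of $L_*$ on the closed set $S_3^\varepsilon$ (Corollary \ref{Cor:Lyapunov}) and the invariance of the $\omega$-limit set, one sees that $L_*\equiv \ell$ on $\omega(z)$. The heart of the proof consists in showing that $\ell<\E_\tau$. Indeed, if $\ell=\E_\tau$, then monotonicity combined with $L_*(z)\leq \E_\tau$ would force $L_*(\Phi_t(z))=\E_\tau$ for every $t\geq \tau$, hence $\tfrac{\partial}{\partial t}L_*(\Phi_t(z))\equiv 0$, and \eqref{Eq:Lyapunov} together with the strict positivity of $g$ away from $1$ would yield $X(t-\tau)=X(t)$ for all $t\geq \tau$. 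Exactly as in the proof of Theorem \ref{Thm:PeriodConv}, this implies that $(X,y)$ is either constantly equal to $(X^*,y^*)$, which is impossible since it would give $L_*(z)=0\neq \E_\tau$, or a nonconstant $\tau$-periodic solution. By the uniqueness statement of Theorem \ref{Thm:NoPeriod}, the latter coincides with $(p,q)$ up to a phase shift, so $z\in S_\tau$, contradicting $z\notin S_\tau$.

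Therefore $\ell<\E_\tau$. Since $L_*\equiv \E_\tau$ on $S_\tau$ by Remark \ref{Rem:Eps_tau}, this yields $\omega(z)\cap S_\tau=\emptyset$; combined with $\omega(z)\subset \{\overline{E^*}\}\cup S_\tau$ from Proposition \ref{Prop:Attract-Set} we conclude $\omega(z)=\{\overline{E^*}\}$, so $\Phi_t(z)\to \overline{E^*}$ in $\X$. The Lyapunov stability of $E^*$ provided by Proposition \ref{Prop:Stability} then upgrades this attractiveness to global asymptotic stability on $\{z\in S_2:L_*(z)\leq \E_\tau\}\setminus S_\tau$.

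The main obstacle is the boundary case $L_*(z)=\E_\tau$ with $z\notin S_\tau$, where one must exclude the scenario of $L_*$ remaining constant equal to $\E_\tau$ along an orbit that is not itself the periodic orbit. The decisive ingredient is that the vanishing of the Lyapunov derivative \eqref{Eq:Lyapunov} forces $\tau$-periodicity of $X$, and the uniqueness-of-period statement in Theorem \ref{Thm:NoPeriod} then pins the orbit down onto $S_\tau$, delivering the contradiction.
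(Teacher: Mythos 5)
Your argument is correct and essentially the paper's own proof: both rest on the monotonicity of $t\mapsto L_*(\Phi_t(z))$, the fact that $L_*\equiv\E_\tau$ on $S_\tau$, and the observation that a constant Lyapunov value forces $X(t-\tau)=X(t)$ and hence (via Theorems \ref{Thm:PeriodConv} and \ref{Thm:NoPeriod}) either the equilibrium or membership in $S_\tau$; you merely merge the paper's two cases $L_*(z)<\E_\tau$ and $L_*(z)=\E_\tau$ into a single argument about the limit $\ell$. One small inaccuracy: \eqref{Eq:Expl-Lyap} only forces $\phi(\tau)>0$ and $y>0$ when $L_*(z)<\infty$ (the integral term $V_3$ can be finite even if $\phi$ vanishes at interior points), so finiteness of $L_*$ does not by itself place $z$ in $S_3$ — but this is harmless here since $z\in S_2$ is assumed and, by Proposition \ref{Prop:Invariance}, the orbit enters $S_3$ after time $3\tau$.
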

\begin{proof}
Since $E^*$ is stable by Proposition \ref{Prop:Stability}, it remains to prove the attractiveness. We see that
$$L_{\E_\tau} \subset \{z\in S_2 : L_*(z)\leq \E_\tau\}\setminus S_\tau.$$
First, let $z\in L_{\E_\tau}$ and define
$$\E_*:=F_z(\tau)<\E_\tau.$$
We know that \eqref{Eq:Attrac_Per} holds. If $\omega(z)\subset S_\tau$, then there would exist a time $t^*\geq \tau$ such that
$$\E_*<F_z(t^*)<\E_\tau$$
which contradict the fact that $F_z$ is nonincreasing. Consequently \eqref{Eq:Attrac_NoPer} actually holds.
Now, let
$$z\in \{w\in S_2 : L_*(w)=\E_\tau\}\setminus S_\tau,$$
and suppose that $\omega(z)\subset S_\tau$. Then one needs to have
$$F_z(t)=\E_\tau, \quad \forall t\geq \tau,$$
i.e. $F_z$ must be constant. Using Equation \eqref{Eq:Lyapunov}, it implies that 
$$X(t-\tau)=X(t), \quad \forall t\geq \tau.$$
As in the proof of Theorem \ref{Thm:PeriodConv}, either $(X(t),y(t))\equiv (X^*,y^*)$ but then we would have $L_*(z)=L_*(\overline{E^*})=0$ that is absurd, or $(X,y)$ is a $\tau$-periodic function, which is also absurd since $$z\not\in S_\tau.$$
Consequently \eqref{Eq:Attrac_NoPer} holds and the asymptotic stability follows.
\end{proof}
We can deduce the following result, without supposing \eqref{Eq:Cond_Imag} as we did for Theorem \ref{Thm:LocStab}
\begin{corollary}\label{Cor:LAS}
The nontrivial equilibrium $E^*$ is locally asymptotically stable.
\end{corollary}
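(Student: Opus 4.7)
The plan is to split on whether hypothesis \eqref{Eq:CondPeriod} is in force. If \eqref{Eq:CondPeriod} does not hold, Theorem \ref{Thm:Conv_NoPeriod} directly yields the global asymptotic stability of $E^*$ in $S_2$, hence in particular its local asymptotic stability, and there is nothing more to do. It is therefore enough to treat the nontrivial case where \eqref{Eq:CondPeriod} is satisfied.

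Assuming \eqref{Eq:CondPeriod}, Proposition \ref{Prop:GAS_part} already provides global asymptotic stability of $E^*$ on the set
$$A := \{z \in S_2 : L_*(z) \leq \E_\tau\} \setminus S_\tau,$$
so the task reduces to exhibiting some $\eta > 0$ with $B(\overline{E^*},\eta) \subset A$. Two separate ingredients suffice. First, since $\overline{E^*} = (X^*\mathbf{1}_{[0,\tau]}, y^*)$ has strictly positive components, any sufficiently small ball centered at $\overline{E^*}$ lies inside $S_3 \subset S_2$. Second, by Remark \ref{Rem:Eps_tau} the functional $L_*$ equals $\E_\tau > 0 = L_*(\overline{E^*})$ identically on $S_\tau$, so the strict sublevel set $L_{\E_\tau}$ is automatically disjoint from $S_\tau$. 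Applying Lemma \ref{Lemma:LevelSet_2} with the parameter $\E_\tau$ yields some $\rho_1 > 0$ such that $B(\overline{E^*}, \rho_1) \subset L_{\E_\tau}$. Taking $\eta$ smaller than both $\rho_1$ and the threshold guaranteeing inclusion in $S_2$, one obtains $B(\overline{E^*}, \eta) \subset A$.

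From here, attractivity follows from Proposition \ref{Prop:GAS_part} applied inside $A$, while Lyapunov stability has already been established in full generality by Proposition \ref{Prop:Stability}. Combining the two gives local asymptotic stability, independently of whether \eqref{Eq:Cond_Imag} holds. The only substantive observation is the energy separation $L_*(\overline{E^*}) = 0 < \E_\tau = L_*|_{S_\tau}$, which screens off the periodic orbit from any small neighborhood of the equilibrium through the Lyapunov functional; the rest is a routine assembly of earlier results, so I do not anticipate a genuine obstacle beyond this energy-level bookkeeping.
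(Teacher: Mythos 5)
Your proof is correct and follows essentially the same route as the paper: both arguments hinge on the energy separation $L_*(\overline{E^*})=0<\E_\tau=L_*|_{S_\tau}$ together with the continuity of $L_*$ near $\overline{E^*}$ (which you make precise via Lemma \ref{Lemma:LevelSet_2}) to place a small ball around $\overline{E^*}$ inside the basin of Proposition \ref{Prop:GAS_part}, and then invoke the Lyapunov stability from Proposition \ref{Prop:Stability}. Your explicit case split on \eqref{Eq:CondPeriod} and the check that the ball stays in $S_2$ are welcome refinements of the paper's terser argument, but not a different method.
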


\begin{proof}
Since
$$\mathcal{E}_{\overline{E^*}}=F_{\overline{E^*}}(\tau)=0,$$
then, by continuity of $L^*$, we can find a neighborhood of $\overline{E^*}$, denoted by $V_{\overline{E^*}}$, such that
$$V_{\overline{E^*}}\subset \{z\in S_2: L_*(z)\leq \mathcal{E}_\tau\}\setminus S_\tau.$$
Consequently, for every initial condition $z\in V_{\overline{E^*}}$, the solution of \eqref{Eq:System} will converge to $E^*$, whence the local asymptotic stability.
\end{proof}
We now focus on the $\tau$-periodic solution by proving its unattractiveness.

\begin{definition}
Let $S\subset \X$ be a subset of $\X$. We say that $(p,q)$ is \textbf{weakly orbitally unattractive} in $S$ if, for every $\eta>0$, there exist $h\in[0,\tau]$ and $(\phi,y_\tau)\in S$ that satisfies $\|(\phi,y_\tau)-(\overline{p},\overline{q})\|_{\X}\leq \eta$ such that
\begin{equation}\label{Eq:Unattrac}
\limsup_{t\to \infty}\|(\Phi^X_{t+h}(\phi,y_\tau)(\tau),\Phi^y_{t+h}(\phi,y_\tau))-(p,q)(t)\|_{\R^2}>0;
\end{equation}
%\textbf{strongly orbitally unattractive} in $S$ if, for every initial condition $(\phi,y_0)\in S\setminus S_\tau$, there exists $h\in[0,\tau]$ such that \eqref{Eq:Unattrac} is satisfied.
\end{definition}
\begin{remark}Note that $(\Phi^X_{t+h}(\phi,y_\tau)(\tau), \Phi^y_{t+h}(\phi,y_\tau))$ is simply the solution of \eqref{Eq:System} with the initial condition $(\phi, y_\tau)$ at time $t+h$, \textit{i.e.} $(X(t+h),y(t+h)).$
\end{remark}
We need:
\begin{lemma}
One can suppose without loss of generality, that
$$p(\tau)=X^*, \qquad q(\tau)\neq y^*,$$
so that $\overline{p}(0)=\overline{p}(\tau)=X^*$, $\overline{q}\neq y^*$ and
\begin{equation}\label{Eq:EnergPeriod}
\E_\tau=y^*g\left(\dfrac{\overline{q}}{y^*}\right)+\alpha \beta_0 e^{-\mu_0 \tau}X^* \int_0^\tau g\left(\dfrac{\overline{p}(s)}{X^*}\right)ds.
\end{equation}
\end{lemma}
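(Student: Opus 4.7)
The plan is to exploit the phase-shift freedom in the periodic orbit (granted by Theorem~\ref{Thm:NoPeriod}) to position a crossing of the vertical line $\{X=X^*\}$ exactly at time $\tau$, then read off the remaining two assertions as consequences.

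First, I would observe that by Theorem~\ref{Thm:NoPeriod} the $\tau$-periodic solution $(p,q)$ satisfies the classical Lotka–Volterra ODE~\eqref{Eq:SystemSimp}, so the trajectory $\{(p(t),q(t)):t\geq\tau\}$ coincides with a level set $\{\mathcal{E}=\E^*\}$ of the conserved energy in Lemma~\ref{Lemma:Period}. Since $\E^*>0$ and $(p,q)$ is non-constant, this trajectory is a simple closed curve in the positive quadrant encircling $E^*=(X^*,y^*)$ and must therefore intersect the vertical line $\{X=X^*\}$. Hence there exists $t_0\in[\tau,2\tau)$ with $p(t_0)=X^*$. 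The orbit being determined only up to a phase shift, replacing $(p(\cdot),q(\cdot))$ by $(p(\cdot+t_0-\tau),q(\cdot+t_0-\tau))$ produces another $\tau$-periodic solution of~\eqref{Eq:SystemSimp} with the same orbit but satisfying $p(\tau)=X^*$. By $\tau$-periodicity we also get $p(2\tau)=X^*$, so $\overline{p}(0)=\overline{p}(\tau)=X^*$.

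Next, given $p(\tau)=X^*$, the $y$-equation in~\eqref{Eq:SystemSimp} evaluated at $t=\tau$ gives
$$q'(\tau)=(\alpha\gamma_0 X^*-\delta)\,q(\tau)=0,$$
since $X^*=\delta/(\alpha\gamma_0)$. If one had $q(\tau)=y^*$ then $(p(\tau),q(\tau))=E^*$ would be an equilibrium of~\eqref{Eq:SystemSimp}, and by uniqueness of ODE solutions $(p,q)\equiv E^*$ would be constant, contradicting the very definition of a $\tau$-periodic (i.e.\ non-constant) solution. Consequently $\overline{q}=q(\tau)\neq y^*$.

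Finally, since $g(1)=1-\ln 1-1=0$ and $\overline{p}(\tau)/X^*=1$, the first summand $\alpha X^*g(\overline{p}(\tau)/X^*)$ in the formula for $\E_\tau$ recalled in Remark~\ref{Rem:Eps_tau} vanishes, leaving exactly~\eqref{Eq:EnergPeriod}. The only nontrivial point in the argument is the topological remark that the periodic orbit crosses $\{X=X^*\}$; this is elementary once one recognizes the orbit as a Jordan curve enclosing $E^*$, which in turn rests on the closed level sets of $\mathcal{E}$ from Lemma~\ref{Lemma:Period}, so I expect no real obstacle.
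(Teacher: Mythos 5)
Your proof is correct and follows essentially the same route as the paper: shift the phase so that a crossing of $\{X=X^*\}$ lands at $t=\tau$, rule out $q(\tau)=y^*$ by noting that the orbit would then pass through the equilibrium $E^*$ and hence be constant, and conclude \eqref{Eq:EnergPeriod} from $g(1)=0$. The only (harmless) difference is in locating the crossing time: you invoke the fact that the level set $\{\mathcal{E}=\E^*\}$ is a closed curve encircling $E^*$, whereas the paper argues more elementarily that if $p-X^*$ kept a fixed sign on $[\tau,2\tau]$ then $q$ would be strictly monotone there and could not be $\tau$-periodic.
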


\begin{proof}
There necessarily exists $t^*\in[\tau,2\tau]$ such that $p(t^*)=X^*$. Indeed, if
$$p(t)<X^*, \ \forall t\in[\tau,2\tau],$$
then
$$q'(t)<0, \ \forall t\in[\tau,2\tau],$$
so $q$ is decreasing on $[\tau,2\tau]$ and cannot be $\tau$-periodic. Similarly, if
$$p(t)>X^*, \ \forall t\in[\tau,2\tau],$$
then $q$ would be increasing on $[\tau,2\tau]$. Let say, without loss of generality, that $t^*=\tau$. Now suppose that
$$q(\tau)=y^*.$$
Since $(p,q)$ is solution of \eqref{Eq:LVmodel} with $(x_\tau,y_\tau)=(X^*,y^*)$, we would get
$$\E_{(x_\tau,y_\tau)}=0$$
so
$$p(t)=X^*, \qquad q(t)=y^*, \qquad \forall t\geq \tau$$
which is absurd. Consequently $q(\tau)\neq y^*$.
\end{proof}
We now prove the following:
\begin{proposition}\label{Prop:Orb}
The $\tau$-periodic function $(p,q)$ is weakly orbitally unattractive in $S_2$.
\end{proposition}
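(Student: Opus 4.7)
The strategy is to apply Proposition \ref{Prop:GAS_part} to exhibit, arbitrarily close to $(\overline{p},\overline{q})$, initial conditions whose orbits converge to $E^*$ rather than to the periodic orbit. Since $(p,q)$ is nonconstant and therefore stays a fixed positive distance away from $E^*$, such orbits automatically fail to approach $(p,q)$ in the sense of \eqref{Eq:Unattrac}, for any phase $h$.

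Given $\eta>0$, I would consider the convex combination
\begin{equation*}
z_\lambda := (1-\lambda)(\overline{p},\overline{q}) + \lambda(X^*\mathbf{1}_{[0,\tau]},y^*), \quad \lambda\in(0,1],
\end{equation*}
which lies in $S_3\subset S_2$ (all components remain positive, indeed bounded below, for $\lambda$ small enough) and satisfies $\|z_\lambda-(\overline{p},\overline{q})\|_\X=\lambda\|\overline{E^*}-(\overline{p},\overline{q})\|_\X$, hence is within $\eta$ of $(\overline{p},\overline{q})$ as soon as $\lambda$ is small. The crucial point is the strict inequality $L_*(z_\lambda)<\E_\tau$: using strict convexity of $g$ on $(0,\infty)$ (immediate from $g''(x)=1/x^2>0$) together with $\overline{q}\neq y^*$ guaranteed by the preceding lemma, one gets
\begin{equation*}
V_2(z_\lambda)=y^*g\!\left(\tfrac{(1-\lambda)\overline{q}+\lambda y^*}{y^*}\right)<(1-\lambda)V_2(\overline{p},\overline{q}),
\end{equation*}
while $V_1$ and $V_3$ satisfy the corresponding convex inequalities with $V_j(\overline{E^*})=0$. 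Summing and using $L_*(\overline{p},\overline{q})=\E_\tau$ yields $L_*(z_\lambda)<\E_\tau$. Since $S_\tau\subset\{L_*=\E_\tau\}$ by Remark \ref{Rem:Eps_tau}, we conclude that $z_\lambda\in\{z\in S_2 : L_*(z)\leq \E_\tau\}\setminus S_\tau$.

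By Proposition \ref{Prop:GAS_part}, the solution $(X,y)$ starting from $z_\lambda$ converges to $E^*$. To finish, note that since $(p,q)$ is nonconstant $\tau$-periodic, one has $(p,q)(t)\neq E^*$ for every $t\geq\tau$ (otherwise uniqueness of solutions at the equilibrium would force $(p,q)$ to be constant); by $\tau$-periodicity and continuity, $\delta:=\inf_{t\geq\tau}\|(p,q)(t)-E^*\|_{\R^2}>0$. The triangle inequality then gives, for any $h\in[0,\tau]$,
\begin{equation*}
\|(X(t+h),y(t+h))-(p,q)(t)\|_{\R^2}\geq \delta-\|(X(t+h),y(t+h))-E^*\|_{\R^2},
\end{equation*}
and taking $\liminf$ as $t\to\infty$ yields at least $\delta>0$, which establishes \eqref{Eq:Unattrac}.

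The main obstacle is the strict inequality $L_*(z_\lambda)<\E_\tau$; everything else is then routine. This inequality rests on strict convexity of $g$ together with the normalization $\overline{q}\neq y^*$ from the preceding lemma (equivalently, one could invoke $\overline{p}\not\equiv X^*\mathbf{1}_{[0,\tau]}$, again a consequence of nonconstancy of the periodic orbit, to obtain the strict drop through $V_3$ instead).
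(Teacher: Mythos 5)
Your proof is correct and follows essentially the same strategy as the paper's: both arguments produce an initial condition arbitrarily close to $(\overline{p},\overline{q})$ whose Lyapunov energy is strictly below $\E_\tau$ (the paper by nudging only the $\overline{q}$-coordinate toward $y^*$ and using monotonicity of $g$ on either side of its minimum, you by a convex combination with $\overline{E^*}$ and strict convexity of $g$), and then both conclude via the monotonicity of $F_z$, i.e.\ via Proposition \ref{Prop:GAS_part}, that the resulting orbit converges to $E^*$ rather than to $S_\tau$. Your explicit closing step, bounding $\inf_{t\ge\tau}\|(p,q)(t)-E^*\|_{\R^2}$ away from zero and applying the triangle inequality to obtain \eqref{Eq:Unattrac}, is a detail the paper leaves implicit and is a welcome addition.
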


\begin{proof}
We know from Proposition \ref{Prop:Energ} that for every initial condition $(\phi,y_\tau)\in S_3$, the function $F_{(\phi,y_\tau)}$ defined by \eqref{Eq:Energ} is nonincreasing. We see that  the energy for the periodic function, denoted by $\E_\tau$, is given by \eqref{Eq:EnergPeriod}. Consider
$$y_\tau=\overline{q}+\eta\left(\dfrac{y^*-\overline{q}}{|y^*-\overline{q}|}\right)$$
with $\eta>0$ small enough such that 
$$g\left(\dfrac{y_\tau}{y^*}\right)<g\left(\dfrac{\overline{q}}{y^*}\right)$$
which is possible with the fact that $\overline{q}\neq y^*$ and since $g$ is decreasing on $(0,1]$ and increasing on $[1,\infty)$). Consequently we get
$$\E_*:=F_{(\overline{p},y_\tau)}(\tau)=y^*g\left(\dfrac{y_\tau}{y^*}\right)+\alpha \beta_0 e^{-\mu_0 \tau}X^* \int_0^\tau g\left(\dfrac{\overline{p}(s)}{X^*}\right)ds<\E_\tau .$$
Recalling that \eqref{Eq:Attrac_Per} holds, we deduce that if we had $\omega(z)\subset S_\tau$ then there would exist a time $t^*>\tau$ such that
$$\E_*<F_{(\overline{p},y_0)}(t^*)<\E_\tau $$
but it would contradict the fact that the function $F_{(\overline{p},y_\tau)}$ is nonincreasing. Consequently \eqref{Eq:Attrac_NoPer} holds and \eqref{Eq:Unattrac} is satisfied. We readily see that $\eta>0$ can be taken as small as we want. The weak unattractiveness in $S_3$ is then obtained with the fact that, if it is true for small $\eta>0$, then it is clearly true for all $\eta>0$.
\end{proof}

\begin{remark}\label{Rem:Orb}
Note that the latter result and Proposition \ref{Prop:Attract-Set} induce that one can find some initial conditions, near the periodic solution, such that the solution of \eqref{Eq:System} converges to $E^*$. The question whether the unattractiveness is strong (i.e. true for every initial conditions in $S_2\setminus S_{\tau}$) is an open problem.
\end{remark}

\subsection{Numerical simulations}

In this section, we show some numerical simulations to illustrate the results proven above. We consider $\mu_0=0.5, \tau=3, \gamma_0=0.5, \alpha=0.7, \delta=2$ and we let $\beta_0$ vary. If $\beta_0=10,$ then  \eqref{Eq:CondPeriod} does not hold (the value is around $0.89$) and consequently to Theorem \ref{Thm:Conv_NoPeriod}, we get the convergence to $E^*$ whatever the initial condition taken in $S_2$ (see Figure \ref{Figure1}). Now, if $\beta_0=20$, then \eqref{Eq:CondPeriod} holds (the value is around $1.34$). In one hand, Proposition \ref{Prop:GAS_part} implies the convergence to $E^*$ in a subset of $S_2$ (see Figure \ref{Figure2} for two different sets of initial conditions). On the other hand, Proposition \ref{Prop:Orb} implies that $S_\tau$ is weakly orbitally unattractive, and by Remark \ref{Rem:Orb} we know that there exists some initial conditions near the periodic solution, the solution of \eqref{Eq:System} converges to $E^*$ (see Figure \ref{Figure3}). All these simulations let us think that when \eqref{Eq:CondPeriod} holds, the equilibrium $E^*$ is globally asymptotically stable in $S_2\setminus S_\tau$ and that the $\tau$-periodic solution is strongly unattractive in $S_2\setminus S_\tau$.

\begin{figure}[hbtp]
\begin{center}
\includegraphics[scale=0.3]{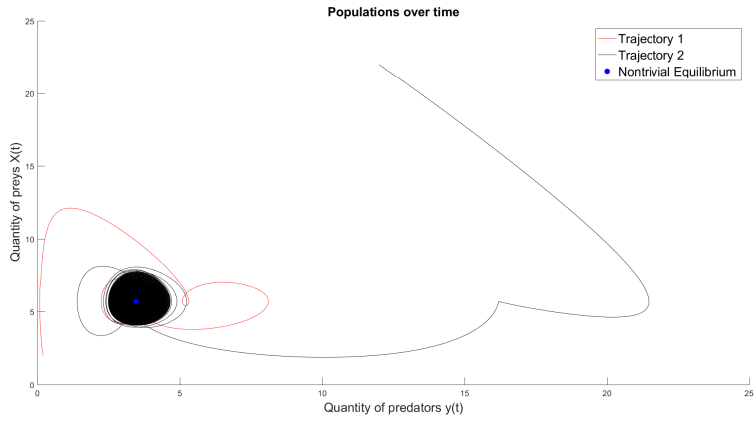}
\caption{When \eqref{Eq:CondPeriod} does not hold}
\label{Figure1}
\end{center}
\end{figure}

\begin{figure}[hbtp]
\begin{center}
\includegraphics[scale=0.3]{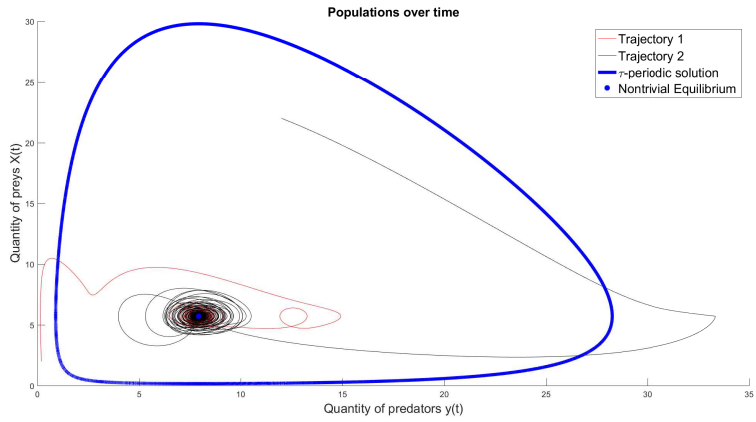}
\caption{When \eqref{Eq:CondPeriod} holds and initial conditions away from the $\tau$-periodic solution}
\label{Figure2}
\end{center}
\end{figure}

\begin{figure}[hbtp]
\begin{center}
\includegraphics[scale=0.3]{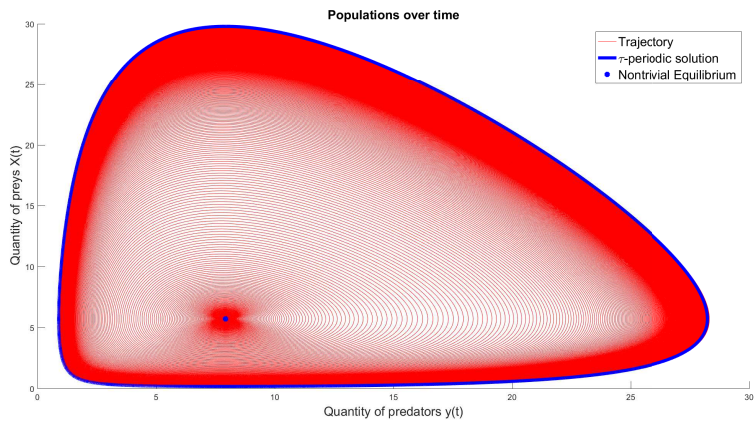}
\caption{When \eqref{Eq:CondPeriod} holds and initial condition near the $\tau$-periodic solution}
\label{Figure3}
\end{center}
\end{figure}

\section{Back to the PDE model}\label{Sec:PDE}

In this section, we return to the initial PDE predator-prey model and we prove an asymptotic stability result for the nontrivial equilibrium $E_2:=(x_2,y_2)\in \Y_+$, where $(x_2,y_2)$ satisfies the following system:
\begin{equation}\label{Eq:Sys_E2}
\left\{
\begin{array}{ccc}
x_2(a)=x_2(0)e^{-\int_0^a \mu(s)ds-y_2\int_0^a \gamma(s)ds}, \\
x_2(0)\left[1-\int_0^\infty \beta(a)e^{-\int_0^a \mu(s)ds-y_2\int_0^a \gamma(s)ds}da\right]=0, \\
y_2\left[\alpha \int_0^\infty \gamma(a)x_2(a)da-\delta\right]=0.
\end{array}
\right.
\end{equation}

\subsection{Attractiveness of $E_2$}

By analogy with the set $S_2$ for the delay problem, we define for the PDE case:
$$\Y_2:=\{(x_0,y_0)\in L^1_+(\R_+)\times (0,\infty): \int_0^\infty x_0(a)da>0\}.$$
We can prove:

\begin{theorem}\label{Thm:PDE_Att}
Suppose that \eqref{Eq:CondPeriod} does not hold. Under the assumption \eqref{Eq:PartCase}, the equilibrium $E_2$ is globally attractive in $\Y_2$ for \eqref{Eq:PDEmodel}.
\end{theorem}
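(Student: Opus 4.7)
The plan is to reduce the PDE convergence to the delay-system convergence established in Theorem \ref{Thm:Conv_NoPeriod}, and then reconstruct the convergence of the age profile $x(t,\cdot)$ in $L^1$ by means of the method of characteristics.

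First, I would fix $(x_0, y_0) \in \Y_2$ and construct the corresponding initial datum $(\phi, y_\tau) \in \X_+$ of the delay system \eqref{Eq:System} via the auxiliary ODE on $[0,\tau]$ given in Section \ref{Sec:Intro}. Integrating along characteristics yields an explicit expression showing that $\phi(\tau)$ is a strictly positive multiple of $\int_0^\infty x_0(a)\,\mathrm{d}a$; by continuity of $\phi$ this forces $\int_0^\tau \phi(a)\,\mathrm{d}a > 0$, while the linearity of the $y$-equation on $[0,\tau]$ together with $y_0 > 0$ gives $y_\tau > 0$. Hence $(\phi, y_\tau) \in S_2$, and Theorem \ref{Thm:Conv_NoPeriod} provides $\lim_{t\to\infty} X(t) = X^*$ and $\lim_{t\to\infty} y(t) = y^*$. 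A direct check from \eqref{Eq:Sys_E2} under \eqref{Eq:PartCase} identifies $y_2 = y^*$, $x_2(0) = \beta_0 X^*$, and the explicit age profile $x_2(a) = \beta_0 X^* e^{-\mu_0 a}$ on $[0,\tau]$, with $x_2(a) = \beta_0 X^* e^{-\mu_0 a - \gamma_0 y^*(a-\tau)}$ on $[\tau,\infty)$.

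Next, using the boundary relation $x(s,0) = \beta_0 X(s)$ together with \eqref{Eq:PartCase}, I would express the solution by characteristics: for $a < t$,
\[
x(t,a) = \beta_0 X(t-a)\exp\Bigl(-\mu_0 a - \gamma_0 \mathbf{1}_{\{a>\tau\}} \int_\tau^a y(t-a+s)\,\mathrm{d}s\Bigr),
\]
while for $a > t$ the solution is determined by $x_0(a-t)$ with decay factor $e^{-\mu_0 t}$. Combining $X(t) \to X^*$ and $y(t) \to y^*$ yields pointwise convergence $x(t,a) \to x_2(a)$ for every $a > 0$. The uniform majorant $x(t,a) \leq \beta_0 \|X\|_\infty e^{-\mu_0 a}$ on $\{a < t\}$ (with $\|X\|_\infty < \infty$ by Lemma \ref{Lemma:Bounded}) supplies an integrable bound, so dominated convergence gives $\int_0^T |x(t,a) - x_2(a)|\,\mathrm{d}a \to 0$ for every fixed $T > 0$. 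A uniform-in-$t$ tail estimate
\[
\int_T^\infty x(t,a)\,\mathrm{d}a \;\leq\; \frac{\beta_0 \|X\|_\infty}{\mu_0} e^{-\mu_0 T} + e^{-\mu_0 t}\|x_0\|_{L^1},
\]
combined with the integrability of $x_2$, then yields $\|x(t,\cdot) - x_2\|_{L^1(\R_+)} \to 0$, which together with $y(t) \to y_2$ completes the proof.

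The main obstacle I anticipate is the uniform-in-$t$ control of the age tail, which cannot be deduced from pointwise convergence alone; here the a priori bound on $X(t)$ from Lemma \ref{Lemma:Bounded} is indispensable, as it converts the characteristic formula into an $L^1$-dominated convergence argument. A smaller technical point, easy but worth checking, is the inclusion at the delay level showing that $(x_0, y_0) \in \Y_2$ implies $(\phi, y_\tau) \in S_2$ even when the initial prey mass lies entirely in $[0,\tau)$; the aging of this mass into $[\tau,\infty)$ within time $\tau$ produces the strictly positive value $\phi(\tau)$ required for membership in $S_2$.
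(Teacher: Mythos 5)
Your proposal is correct and follows the same overall strategy as the paper: reduce to the delay system by setting $\phi(\theta)=\int_\tau^\infty x(\theta,a)\,\mathrm{d}a$, verify $(\phi,y(\tau))\in S_2$, invoke Theorem \ref{Thm:Conv_NoPeriod}, and then transport the convergence $X(t)\to X^*$, $y(t)\to y^*$ back to the age profile through the characteristics and the boundary relation $x(t,0)=\beta_0 X(t)$. The identification $y_2=y^*$, $x_2(0)=\beta_0 X^*$ and the explicit form of $x_2$ are exactly as in the paper. One minor imprecision: $\phi(\tau)$ is not literally a fixed positive multiple of $\int_0^\infty x_0$, but is squeezed between two positive multiples of it (as the paper notes in Section \ref{Sec:Partition}), which is all you need for $\phi(\tau)>0$ and hence, by continuity, $\int_0^\tau\phi>0$.

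Where you genuinely add something is at the end. The paper's proof establishes only the pointwise limit $x(t,a)\to x_2(a)$ for each fixed $a$ (via an $\varepsilon$-sandwich on $x(t,\cdot)$ for $t\geq t^*$) and then asserts that ``the result follows,'' leaving the passage to convergence in the state space $\Y=L^1(\R_+)\times\R$ implicit. Your explicit characteristic representation, the integrable majorant $\beta_0\|X\|_\infty e^{-\mu_0 a}$ on $\{a<t\}$ supplied by Lemma \ref{Lemma:Bounded}, and the uniform-in-$t$ tail bound $\int_T^\infty x(t,a)\,\mathrm{d}a\leq \beta_0\|X\|_\infty\mu_0^{-1}e^{-\mu_0 T}+e^{-\mu_0 t}\|x_0\|_{L^1}$ together give genuine $L^1(\R_+)$ convergence of $x(t,\cdot)$ to $x_2$, which is the natural meaning of attractiveness in $\Y_2$. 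So your argument is not just equivalent but strictly more complete than the published one on this final step; the price is only the small bookkeeping of splitting the age axis at $a=t$ and at $a=T$.
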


\begin{proof}
Let $(x_0, y_0)\in \Y_2$ and $(x,y)$ be the solution of \eqref{Eq:PDEmodel}. We get
$$x(\tau,a)\geq x_0(a-\tau)e^{-\mu_0 \tau}e^{-\gamma_0 \tau M},$$
for every $a\geq \tau$ where $M=\max_{s\in [0,\tau]}y(s)<\infty$. Therefore 
$$\int_{\tau}^\infty x(\tau,a)da>0$$
and we also have
$$y(\tau)\geq y_0 e^{-\delta \tau}>0.$$
We can then consider for \eqref{Eq:System} the initial condition $z=(\phi, y(\tau))\in \X$, 
where
$$\phi(\theta)=\int_\tau^\infty x(\theta,a)da,$$
for every $\theta\in [0,\tau]$. Since $\phi(\tau)>0$, we can check by continuity that $\int_0^\tau \phi(s)ds>0$ whence $z\in S_2$. We know, by Theorem \ref{Thm:Conv_NoPeriod}, that $E^*$ is globally asymptotically stable in $S_2$ (for \eqref{Eq:System}). Consequently, we get
$$\lim_{t\to \infty}y(t)=y^*, \qquad \lim_{t\to \infty}X(t)=X^*$$
hence 
$$\lim_{t\to \infty}x(t,0)=\lim_{t\to \infty}\beta_0 \int_\tau^\infty x(t,a)da=\lim_{t\to \infty}\beta_0 X(t)=\beta_0 X^*.$$
Let $\varepsilon>0$, then there exists $t^*>\tau$ such that for every $t\geq t^*$, we have $|X(t)-X^*|\leq \varepsilon$ and $|y(t)-y^*|\leq \varepsilon$. The positivity of $(x,y)$, obtained in \cite[Theorem 2.3]{PerassoRichard19}, implies that for $t\geq t^*$, we have
\begin{equation*}
\left\{
\begin{array}{lcl}
\partial_a x(t,a)+\partial_t x(t,a)=-\mu_0 x(t,a)-\gamma_0 \chi_{[\tau,\infty)}(a)x(t,a),\\
\beta_0 (X^*-\varepsilon) \leq x(t,0)\leq \beta_0 (X^*+\varepsilon).
\end{array}
\right.
\end{equation*}
For every $a\leq t$, we thus get
$$\beta_0 (X^*-\varepsilon) e^{-\mu_0 a} \leq x(t,a)\leq \beta_0 (X^*+\varepsilon) e^{-\mu_0 a}$$
if $a\in[0,\tau]$, and
$$ \beta_0 (X^*-\varepsilon) e^{-\mu_0 a -\gamma_0(a-\tau)(y^*+\varepsilon)}\leq x(t,a)\leq \beta_0 (X^*+\varepsilon) e^{-\mu_0 a-\gamma_0(a-\tau)(y^*-\varepsilon)}$$
if $a\in[\tau,t]$.
Letting $\varepsilon>0$ go to $0$, we deduce that
\begin{equation*}
\lim_{t\to \infty}x(t,a)=\begin{cases}
\beta_0 X^* e^{-\mu_0 a} & \ \forall a\in[0,\tau], \\
\beta_0 X^* e^{-\mu_0 a}e^{-\gamma_0 y^* (a-\tau)} & \ \forall a\in[\tau,\infty).
\end{cases}
\end{equation*}
Since $(x_2,y_2)$ satisfy \eqref{Eq:Sys_E2}, then we see that
$$y_2=\dfrac{\beta_0 e^{-\mu_0 \tau}-\mu_0}{\gamma_0}=y^*.$$
Moreover we have
\begin{equation*}
x_2(a)=
\begin{cases}
x_2(0)e^{-\mu_0 a} &\text{if } a\in[0,\tau], \\
x_2(0)e^{-\mu_0 a -\gamma_0 y_2(a-\tau)} &\text{if } a\geq \tau,
\end{cases}
\end{equation*}
so that
$$\alpha \gamma_0 x_2(0)\int_\tau^\infty e^{-\mu_0 a-\gamma_0 y_2(a-\tau)}da=\delta,$$
due to \eqref{Eq:Sys_E2}, whence
$$x_2(0)=\dfrac{\delta}{\alpha \gamma_0}e^{\mu_0 \tau}(\mu_0+\gamma_0 y_2)=\beta_0 X^*.$$
It is then clear that
$$\lim_{t\to \infty}x(t,a)=x_2(a)$$
for every $a\geq 0$ and the result follows.
\end{proof}

\subsection{Stability of $E_2$}

In this section, we deal with the stability of $E_2$. Let the linear operator $\A:D(\A)\subset \Y\to \Y$, with $\Y=L^1(\R_+)\times \R$, be defined by
\begin{equation*}
D(\A)=\{(\phi,z)\in \Y, \phi \in W^{1,1}(\R_+) \mbox{ and } \phi(0)=\int_0^{\infty} \beta(a)\phi(a)da\},
\end{equation*}
\begin{equation*}
\A=\begin{pmatrix}
\mathcal{D} & 0 \\
0 & -\delta
\end{pmatrix}
\end{equation*}
where
\begin{equation*}
\mathcal{D} \phi=-\dfrac{d\phi}{da}-\mu \phi
\end{equation*}
and the function $h:\Y \to \Y$ given by
\begin{equation*}
h(\phi,z)=\begin{pmatrix}
- z \gamma(.)\phi(.) \\
\alpha z \int_0^{\infty} \gamma(a)\phi(a)\mathrm{d}a
\end{pmatrix}.
\end{equation*}
We know (see \cite{PerassoRichard19}, section 2.2) that $\A$ generates a positive $C_0$-semigroup. We denote by $D_{E_2}h$ the differential of $h$ at $E_2$ and we remind the following.
\begin{definition}
Let $\mathcal{L}(\Y)$ be the space of bounded linear operators on $\Y$ and let $\mathcal{K}(\Y)$ be the subspace of compact
operators on $\Y$. The essential norm $\Vert L\Vert _{\mathnormal{
ess}}$ of $L\in \mathcal{L}(\Y)$ is given by 
\begin{equation*}
\|L\|_{\mathnormal{ess}}=\underset{K\in \mathcal{K}(\Y)}{
\inf }\Vert L-K\Vert _{\Y}.
\end{equation*}
Let $\left\{ T_{\A}(t)\right\}_{t\geq 0} $ be a $C_{0}$-semigroup on $\Y$
with generator $\A:D(\A)\subset \Y\rightarrow \Y$ . The essential growth bound (or essential type) of $\left\{ T_{\A}(t)\right\}_{t\geq
0} $ is given by 
\begin{equation*}
\omega_{\mathnormal{ess}}(\A)=\underset{t\rightarrow \infty }{\lim }\dfrac{\ln (\|T_{\A}(t)\| _{\mathnormal{ess}})}{t}.
\end{equation*}
\end{definition}
We are ready to give the main result of this section.

\begin{theorem}\label{Thm:Stab_E2}
We have
$$\sigma(\A+D_{E_2}h)\subset \{z\in \C: \Re(z)\leq 0\},$$
where $\sigma(\A+D_{E_2}h)$ denotes the spectrum of $\A+D_{E_2}h$. Moreover 
$$\{z\in \sigma(\A+D_{E_2}h): \Re(z)=0\}\neq \emptyset$$
if and only if \eqref{Eq:Cond_Imag} holds, and in this case, the roots are given by \eqref{Eq:Lambda}. In particular, if \eqref{Eq:Cond_Imag} does not hold, then $E_2$ is locally asymptotically stable for \eqref{Eq:PDEmodel}.
\end{theorem}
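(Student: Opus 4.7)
The plan is to compute the eigenvalue problem for $\A + D_{E_2}h$ explicitly, reduce it under \eqref{Eq:PartCase} to the scalar characteristic equation \eqref{Eq:LocalStab} of the delay model, and then invoke the principle of linearized stability for semilinear Cauchy problems governed by $C_0$-semigroups. A direct computation gives
$$D_{E_2}h(\phi, z) = \begin{pmatrix} -y_2 \gamma \phi - z \gamma x_2 \\ \alpha y_2 \int_0^\infty \gamma(a) \phi(a)\, \mathrm{d}a + \alpha z \int_0^\infty \gamma(a) x_2(a)\, \mathrm{d}a \end{pmatrix}.$$
Using the equilibrium identity $\alpha \int_0^\infty \gamma x_2 \, \mathrm{d}a = \delta$ from \eqref{Eq:Sys_E2}, a complex number $\lambda$ lies in the point spectrum iff there exists a nontrivial $(\phi, z) \in D(\A)$ solving
$$\phi'(a) = -(\lambda + \mu(a) + y_2 \gamma(a)) \phi(a) - z \gamma(a) x_2(a), \quad \lambda z = \alpha y_2 \int_0^\infty \gamma(a) \phi(a)\, \mathrm{d}a,$$
together with the renewal condition $\phi(0) = \int_0^\infty \beta(a) \phi(a)\, \mathrm{d}a$.

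\textbf{Reduction to \eqref{Eq:LocalStab}.} Under \eqref{Eq:PartCase} I would integrate the ODE piecewise: on $[0,\tau]$ the source vanishes and $\phi(a) = \phi(0) e^{-(\lambda+\mu_0) a}$, while on $[\tau,\infty)$ variation of constants, combined with the closed form $x_2(a) = \beta_0 X^* e^{-\mu_0 a - \gamma_0 y^*(a-\tau)}$, yields $\phi(a)$ as an explicit linear combination of $\phi(0)$ and $z$. Substituting into the two integral constraints $\phi(0) = \beta_0 \int_\tau^\infty \phi$ and $\lambda z = \alpha y^* \gamma_0 \int_\tau^\infty \phi$ produces a $2\times 2$ homogeneous system in $(\phi(0), z)$; after simplification via the relations $y_2 = y^*$, $\alpha \gamma_0 X^* = \delta$ and $\alpha(\mu_0 + \gamma_0 y^*) = \alpha \beta_0 e^{-\mu_0 \tau}$, its determinant should factor as $(\lambda + \beta_0 e^{-\mu_0 \tau})\bigl(\kappa_1(\lambda) + \kappa_2(\lambda) e^{-\lambda \tau}\bigr)$ with $\kappa_1, \kappa_2$ from \eqref{Eq:kappa}. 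Since the first factor has no zero with $\Re \lambda \geq 0$, the spectral equation on the closed right half-plane reduces exactly to \eqref{Eq:LocalStab}. Points 1--3 of the theorem then follow at once from Theorem \ref{Thm:LocStab} and formula \eqref{Eq:Lambda}. This identification is morally forced, since the integration procedure that reduces \eqref{Eq:PDEmodel} to \eqref{Eq:System} linearizes to the one that reduces the PDE linearization around $E_2$ to the delay linearization \eqref{Eq:Linear} around $E^*$.

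\textbf{Essential growth, linearized stability, and main obstacle.} For point 4, I would invoke the principle of linearized stability for abstract semilinear Cauchy problems (\cite{Webb85,Magal2008}): it suffices that $\sup\{\Re \lambda : \lambda \in \sigma(\A + D_{E_2}h)\} < 0$ and $\omega_{\ess}(\A + D_{E_2}h) < 0$. When \eqref{Eq:Cond_Imag} fails, the spectral bound is strictly negative by the previous step. The perturbation $D_{E_2}h$ splits as a bounded multiplication $\phi \mapsto -y^* \gamma \phi$ plus a compact operator coming from the integrals against $\gamma$; compact perturbations preserve $\omega_{\ess}$, so it suffices to observe that $\omega_{\ess}(\A - y^* \gamma) \leq -\mu_0 < 0$, which is standard for renewal-type transport generators with mortality bounded below. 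The substantive obstacle is the algebraic reduction in the second paragraph: carrying the source $-z \gamma_0 x_2(a)$ through variation of constants where $x_2$ itself has an exponential tail, evaluating the resulting convolution on $[\tau,\infty)$, and verifying that all pre-factors collapse precisely into the announced product; once this is done, the theorem follows cleanly from Theorem \ref{Thm:LocStab} and standard semigroup perturbation theory.
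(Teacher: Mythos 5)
Your proposal follows essentially the same route as the paper: reduce the right-half-plane spectrum to the point spectrum via $\omega_{\ess}(\A+D_{E_2}h)<0$ (the paper simply cites \cite{PerassoRichard19} for this, whereas you sketch the compact-perturbation argument), derive a $2\times 2$ homogeneous system for $(\phi(0),z)$ whose vanishing determinant is the characteristic equation, show that on $\{\Re\lambda\geq 0\}$ it reduces to \eqref{Eq:LocalStab} (in the paper the denominators that appear are powers of $\mu_0+\lambda+y^*\gamma_0=\lambda+\beta_0 e^{-\mu_0\tau}$, which is exactly your harmless extra factor), and conclude from Theorem \ref{Thm:LocStab}. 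The one obligation you leave open --- carrying the source term through variation of constants and checking that the determinant collapses to $\kappa_1(\lambda)+\kappa_2(\lambda)e^{-\lambda\tau}$ --- is precisely the computation the paper performs explicitly, and it does come out as you predict.
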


\begin{proof}
We know from \cite[Theorem 3.3]{PerassoRichard19} that
$$\omega_\ess(\A+D_{E_2}h)<0.$$
Consequently, we have
$$\{\lambda \in \sigma(\A+D_{E_2}h) : \Re(\lambda)\geq 0\}\subset \sigma_p(\A+D_{E_2}h)$$
(see \cite{EngelNagel2000}, Corollary IV.2.11, p. 258), where $\sigma_p$ denotes the point spectrum. Similarly as in \cite[Section 3.2.3]{PerassoRichard19}, we look for solutions of the form $x(t,a)=\overline{x}(a)e^{\lambda t}$, $y(t)=\overline{y}e^{\lambda t}$, where the eigenvalue $\lambda \in \C$ has to satisfy the system $BY=C$, with:
\begin{equation*}
B=\begin{pmatrix}
b_1 & b_2 \\
b_3 & b_4
\end{pmatrix}, C=\begin{pmatrix}
0 \\
0
\end{pmatrix} \text{ and } Y=\begin{pmatrix}
\bar{x}(0) \\
\bar{y}
\end{pmatrix}, \text{ with}:
\end{equation*}
\begin{equation*}
\left\{
\begin{array}{rcl}
b_1 &=& 1-\int_0^{\infty}\beta(a)e^{-\int_0^a \left(\mu(s)+\lambda+ y^* \gamma(s)\right)\mathrm{d}s}\mathrm{d}a, \\
b_2 &=& \dfrac{\delta}{\alpha \Gamma} \int_0^{\infty}\beta(a) e^{-\int_0^a \left[\mu(s)+y^* \gamma(s)\right]\mathrm{d}s}\int_0^a \gamma(u) e^{-\lambda(a-u)}\mathrm{d}u\mathrm{d}a, \\
b_3 &=& \alpha y^* \int_0^{\infty} \gamma(a)e^{-\int_0^a (\mu(s)+\lambda+\gamma(s) y^*)\mathrm{d}s}\mathrm{d}a, \\
b_4 &=& -\lambda -\dfrac{\delta y^*}{\Gamma} \int_0^{\infty} \gamma(a)e^{-\int_0^a \left[\mu(s)+\gamma(s) y^*\right]\mathrm{d}s}\int_0^a \gamma(u) e^{-\lambda(a-u)}\mathrm{d}u\mathrm{d}a,
\end{array}
\right.
\end{equation*}
and $\Gamma=\int_0^{\infty} \gamma(a)e^{-\int_0^a\left[\mu(s)+y^* \gamma(s)\right]\mathrm{d}s}\mathrm{d}a$. While solving $BY=C$, one needs to have \\
$\det(B)=0$ to get a nonzero solution $Y$, that is equivalent to
\begin{equation}\label{Eq:Det-zero}
b_1b_4=b_2b_3.
\end{equation}
We see that
\begin{equation*}
\begin{array}{rcl}
\Gamma&=&\dfrac{\gamma_0 e^{-\mu_0 \tau}}{\mu_0 +y^*\gamma_0 }=\dfrac{\gamma_0}{\beta_0},
\end{array}
\end{equation*}
since
$$\mu_0+y^*\gamma_0 =\beta_0 e^{-\mu_0 \tau}.$$
Consequently, some computations lead to
$$b_1=1-\dfrac{\beta_0e^{-(\mu_0+\lambda)\tau}}{\mu_0+\lambda+y^*\gamma_0},$$
\begin{equation*}
\begin{array}{rcl}
b_2&=&\dfrac{\delta \beta_0 \gamma_0}{\alpha \lambda \Gamma} \displaystyle \int_\tau^\infty e^{\tau y^* \gamma_0}e^{-(\mu_0+y^*\gamma_0)a} \left(1-e^{\lambda(\tau-a)}\right)da \\
&=&\dfrac{\delta \beta_0 \gamma_0 e^{-\mu_0 \tau}}{\alpha \lambda \Gamma}\left(\dfrac{1}{\mu_0 +y^* \gamma_0}-\dfrac{1}{\mu_0+\lambda+y^* \gamma_0}\right) \\
&=&\dfrac{\delta \beta_0 \gamma_0 e^{-\mu_0 \tau}}{\alpha \Gamma (\mu_0+\lambda+y^*\gamma_0)(\mu_0+y^* \gamma_0)} \\
&=&\dfrac{\delta \beta_0}{\alpha (\mu_0+\lambda+y^* \gamma_0)},
\end{array}
\end{equation*}
$$b_3=\dfrac{\alpha y^* \gamma_0 e^{-(\mu_0+\lambda)\tau}}{\mu_0+\lambda+y^* \gamma_0},$$
and
\begin{equation*}
\begin{array}{rcl}
b_4&=&-\lambda-\dfrac{\delta y^* \gamma_0^2 e^{\gamma_0 y^* \tau}}{\lambda \Gamma}\displaystyle \int_\tau^\infty e^{-(\mu_0+\gamma_0 y^*)a}\left(1-e^{\lambda(\tau-a)}\right)da \\
&=&-\lambda-\dfrac{\delta y^* \gamma_0^2 e^{-\mu_0 \tau}}{\lambda \Gamma}\left(\dfrac{1}{\mu_0+y^* \gamma_0}-\dfrac{1}{\mu_0+\lambda+y^* \gamma_0}\right)\\
&=&-\lambda+\dfrac{\delta y^* \gamma_0^2 e^{-\mu_0 \tau}}{\Gamma(\mu_0+y^* \gamma_0)(\mu_0+\lambda+y^* \gamma_0)} \\
&=&-\lambda+\dfrac{\delta y^* \gamma_0}{\mu_0+\lambda+y^*\gamma_0 }.
\end{array}
\end{equation*}
Finally, \eqref{Eq:Det-zero} holds if and only if
\begin{equation*}
\begin{array}{rcrcl}
&&\left(1-\dfrac{\beta_0 e^{-(\mu_0+\lambda)\tau}}{\mu_0+\lambda+y^* \gamma_0}\right)\left(-\lambda-\dfrac{\delta y^* \gamma_0}{\mu_0+\lambda+y^*\gamma_0 }\right)&=&\dfrac{\delta \beta_0 y^*\gamma_0 e^{-(\mu_0+\lambda)\tau}}{(\mu_0+\lambda+y^* \gamma_0)^2} \\
&\Longleftrightarrow& -\lambda\left(1-\dfrac{\beta_0 e^{-(\mu_0+\lambda)\tau}}{\mu_0+\lambda+y^* \gamma_0}\right)&=&\dfrac{\delta  y^* \gamma_0}{\mu_0+\lambda+y^* \gamma_0} \\
&\Longleftrightarrow& -\lambda\left(\beta_0 e^{-\mu_0 \tau}+\lambda-\beta_0 e^{-(\mu_0+\lambda)\tau}\right)&=&\delta y^* \gamma_0,
\end{array}
\end{equation*}
i.e. if and only if \eqref{Eq:LocalStab} holds, where $\kappa_1$ and $\kappa_2$ are given by \eqref{Eq:kappa}. The result follows from Theorem \ref{Thm:LocStab}.	
\end{proof}
By means of Theorems \ref{Thm:PDE_Att} and \ref{Thm:Stab_E2}, we get the following result.

\begin{corollary}
Under the assumption \eqref{Eq:PartCase}, if 
$$\dfrac{\tau \sqrt{\delta y^* \gamma_0}}{2\pi}<1$$
then the equilibrium $E_2$ is globally asymptotically stable in $\Y_2$ for \eqref{Eq:PDEmodel}.
\end{corollary}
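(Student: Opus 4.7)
The statement is a direct combination of the two preceding results, so the plan is simply to verify that the hypothesis $\tau\sqrt{\delta y^* \gamma_0}/(2\pi) < 1$ triggers both of them simultaneously, and then assemble the pieces.

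First I would observe that, under \eqref{Eq:R0>1}, one has $y^*>0$, $\gamma_0>0$, $\delta>0$ and $\tau>0$, so the quantity $\tau\sqrt{\delta y^*\gamma_0}/(2\pi)$ lies in $(0,\infty)$. The assumption that it is strictly less than $1$ therefore rules out equality with any positive integer, i.e.\ the negation of \eqref{Eq:Cond_Imag} holds, and it obviously rules out \eqref{Eq:CondPeriod}. This tiny check is the only nontrivial logical content of the corollary.

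Next I would invoke Theorem \ref{Thm:PDE_Att}: since \eqref{Eq:CondPeriod} does not hold, $E_2$ is globally attractive in $\Y_2$ for \eqref{Eq:PDEmodel}. Then I would invoke Theorem \ref{Thm:Stab_E2}: since \eqref{Eq:Cond_Imag} does not hold, $E_2$ is locally asymptotically stable for \eqref{Eq:PDEmodel}, which in particular gives the Lyapunov stability of $E_2$ in the PDE setting.

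Combining Lyapunov stability with global attractiveness in $\Y_2$ yields, by definition, the global asymptotic stability of $E_2$ in $\Y_2$. There is no real obstacle here: the substantive analytical work has already been done in Theorems \ref{Thm:PDE_Att} and \ref{Thm:Stab_E2} (and ultimately in the delayed system analysis of Section \ref{Sec:Attrac}). The only point to be slightly careful about is that Theorem \ref{Thm:Stab_E2} states local asymptotic stability \emph{for the PDE}, so the stability part indeed takes place in the ambient space $\Y$ with the topology used to define $\Y_2$, matching the topology in which Theorem \ref{Thm:PDE_Att} provides attractiveness; no additional topological reconciliation is required.
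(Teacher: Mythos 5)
Your proposal is correct and matches the paper exactly: the paper derives the corollary as an immediate combination of Theorem \ref{Thm:PDE_Att} (global attractiveness in $\Y_2$ since \eqref{Eq:CondPeriod} fails) and Theorem \ref{Thm:Stab_E2} (stability since the hypothesis, being strictly between $0$ and $1$, cannot satisfy \eqref{Eq:Cond_Imag}). Your explicit check that the strict positivity of $\tau\sqrt{\delta y^*\gamma_0}/(2\pi)$ excludes the integer value $0$ is a small but welcome addition the paper leaves implicit.
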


\begin{remark}
If 
$$\dfrac{\tau \sqrt{\delta y^* \gamma_0}}{2\pi}=1$$
then the attractiveness of $E_2$ in $\Y_2$ is ensured while the stability is not.
\end{remark}

\textbf{Acknowledgements.} The authors would like to thank Mostafa Adimy and Fabien Crauste about their relevant suggestions for this paper.

\bibliographystyle{siamplain}
\bibliography{references}

\begin{thebibliography}{10}

\bibitem{ACR2006}
{\sc M.~Adimy, F.~Crauste, and S.~Ruan}, {\em Modelling hematopoiesis mediated
  by growth factors with applications to periodic hematological diseases},
  Bull. Math. Biol., 68 (2006), pp.~2321--2351,
  \url{https://doi.org/10.1007/s11538-006-9121-9}.

\bibitem{Arino2007}
{\sc O.~Arino, M.~L. Hbid, and E.~A. Dads}, {\em Delay Differential Equations
  and Applications: Proceedings of the NATO Advanced Study Institute held in
  Marrakech, Morocco, 9-21 September 2002}, vol.~205, Springer Science \&
  Business Media, 2007.

\bibitem{Beretta88}
{\sc E.~Beretta, V.~Capasso, and F.~Rinaldi}, {\em Global stability results for
  a generalized {L}otka-{V}olterra system with distributed delays: applications
  to predator-prey and to epidemic systems}, J. Math. Biol., 26 (1988),
  pp.~661--688, \url{https://doi.org/10.1007/BF00276147}.

\bibitem{BerettaKuang96}
{\sc E.~Beretta and Y.~Kuang}, {\em Convergence results in a well-known delayed
  predator-prey system}, J. Math. Anal. Appl., 204 (1996), pp.~840--853.

\bibitem{Brauer87}
{\sc F.~Brauer}, {\em Absolute stability in delay equations}, journal of
  differential equations, 69 (1987), pp.~185--191.

\bibitem{Cushing77}
{\sc J.~M. Cushing}, {\em Integrodifferential Equations and Delay Models in
  Population Dynamics}, Springer-Verlag, Berlin-New York, 1977.
\newblock Lecture Notes in Biomathematics, Vol. 20.

\bibitem{EngelNagel2000}
{\sc K.~J. Engel and R.~Nagel}, {\em One-Parameter Semigroups for Linear
  Evolution Equations}, vol.~63, Springer-Verlag, 2000.

\bibitem{Faria2001}
{\sc T.~Faria}, {\em Stability and bifurcation for a delayed predator-prey
  model and the effect of diffusion}, J. Math. Anal. Appl., 254 (2001),
  pp.~433--463, \url{https://doi.org/10.1006/jmaa.2000.7182}.

\bibitem{Gabriel2012}
{\sc P.~Gabriel}, {\em Long-time asymptotics for nonlinear growth-fragmentation
  equations}, Commun. Math. Sci., 10 (2012), pp.~787--820.

\bibitem{Goh77}
{\sc B.~S. Goh}, {\em Global stability in many-species systems}, The American
  Naturalist, 111 (1977), pp.~135--143.

\bibitem{Goh79}
{\sc B.~S. Goh}, {\em Stability in models of mutualism}, Amer. Natur., 113
  (1979), pp.~261--275, \url{https://doi.org/10.1086/283384}.

\bibitem{Gurel68}
{\sc O.~G\"{u}rel and L.~Lapidus}, {\em Stability via {L}iapunov's second
  method}, Indust. Engrg. Chem., 60 (1968), pp.~13--26.

\bibitem{Gurtin79}
{\sc M.~E. Gurtin and D.~S. Levine}, {\em On predator-prey interactions with
  predation dependent on age of prey}, Mathematical Biosciences, 47 (1979),
  pp.~207--219.

\bibitem{Hale93}
{\sc J.~K. Hale and S.~M. Lunel}, {\em Introduction to Functional Differential
  Equations}, vol.~99 of Applied Mathematical Sciences, Springer-Verlag, New
  York, 1993.

\bibitem{He98}
{\sc X.-Z. He}, {\em The {L}yapunov functionals for delay
  {L}otka-{V}olterra-type models}, SIAM J. Appl. Math., 58 (1998),
  pp.~1222--1236.

\bibitem{Hsu78}
{\sc S.~B. Hsu}, {\em On global stability of a predator-prey system}, Math.
  Biosci., 39 (1978), pp.~1--10.

\bibitem{Hsu2005}
{\sc S.~B. Hsu}, {\em A survey of constructing {L}yapunov functions for
  mathematical models in population biology}, Taiwanese J. Math., 9 (2005),
  pp.~151--173.

\bibitem{HuangLiu2016}
{\sc G.~Huang, A.~Liu, and U.~Fory\'{s}}, {\em Global stability analysis of
  some nonlinear delay differential equations in population dynamics}, J.
  Nonlinear Sci., 26 (2016), pp.~27--41,
  \url{https://doi.org/10.1007/s00332-015-9267-4}.

\bibitem{HuangTakeuchi2010}
{\sc G.~Huang, Y.~Takeuchi, W.~Ma, and D.~Wei}, {\em Global stability for delay
  {SIR} and {SEIR} epidemic models with nonlinear incidence rate}, Bull. Math.
  Biol., 72 (2010), pp.~1192--1207.

\bibitem{Iannelli94}
{\sc M.~Iannelli}, {\em Mathematical Theory of Age-structured Population
  Dynamics}, Giardini Editori e stampatori, 1994.

\bibitem{Iannelli2014}
{\sc M.~Iannelli and A.~Pugliese}, {\em An Introduction to Mathematical
  Population Dynamics}, vol.~79 of Unitext, Springer, Cham, 2014.

\bibitem{Inaba2017}
{\sc H.~Inaba}, {\em Age-Structured Population Dynamics in Demography and
  Epidemiology}, Springer, Singapore, 2017,
  \url{https://doi.org/10.1007/978-981-10-0188-8}.

\bibitem{Kuang93}
{\sc Y.~Kuang}, {\em Delay Differential Equations with Applications in
  Population Dynamics}, vol.~191 of Mathematics in Science and Engineering,
  Academic Press, Inc., Boston, MA, 1993.

\bibitem{LiShu2010}
{\sc M.~Y. Li and H.~Shu}, {\em Global dynamics of an in-host viral model with
  intracellular delay}, Bull. Math. Biol., 72 (2010), pp.~1492--1505,
  \url{https://doi.org/10.1007/s11538-010-9503-x}.

\bibitem{Lotka25}
{\sc A.~J. Lotka}, {\em Elements of Physical Biology}, Williams and Wilkins
  company, Baltimore, 1925.

\bibitem{MagCluskWebb2010}
{\sc P.~Magal, C.~C. McCluskey, and G.~F. Webb}, {\em Lyapunov functional and
  global asymptotic stability for an infection-age model}, Appl. Anal., 89
  (2010), pp.~1109--1140, \url{https://doi.org/10.1080/00036810903208122}.

\bibitem{Magal2008}
{\sc P.~Magal and S.~Ruan}, {\em Structured Population Models in Biology and
  Epidemiology}, Lecture Notes in Mathematics, Springer-Verlag Berlin
  Heidelberg, 2008.

\bibitem{May73b}
{\sc R.~M. May}, {\em Time-delay versus stability in population models with two
  and three trophic levels}, Ecology, 54 (1973), pp.~315--325.

\bibitem{McCluskey2010}
{\sc C.~C. McCluskey}, {\em Complete global stability for an {SIR} epidemic
  model with delay---distributed or discrete}, Nonlinear Anal. Real World
  Appl., 11 (2010), pp.~55--59,
  \url{https://doi.org/10.1016/j.nonrwa.2008.10.014}.

\bibitem{McCluskey2010b}
{\sc C.~C. McCluskey}, {\em Global stability for an {SIR} epidemic model with
  delay and nonlinear incidence}, Nonlinear Anal. Real World Appl., 11 (2010),
  pp.~3106--3109, \url{https://doi.org/10.1016/j.nonrwa.2009.11.005}.

\bibitem{McKendrick25}
{\sc A.~G. McKendrick}, {\em Applications of mathematics to medical problems},
  Proceedings of the Edinburgh Mathematical Society, 44 (1925), pp.~98--130,
  \url{https://doi.org/10.1017/S0013091500034428}.

\bibitem{Muroya2003}
{\sc Y.~Muroya}, {\em Permanence and global stability in a {L}otka-{V}olterra
  predator-prey system with delays}, Appl. Math. Lett., 16 (2003),
  pp.~1245--1250.

\bibitem{Peng2017}
{\sc M.~Peng, Z.~Zhang, and X.~Wang}, {\em Hybrid control of {H}opf bifurcation
  in a {L}otka-{V}olterra predator-prey model with two delays}, Adv. Difference
  Equ., 2017 (2017), p.~387, \url{https://doi.org/10.1186/s13662-017-1434-5}.

\bibitem{Perasso2019}
{\sc A.~Perasso}, {\em Global stability and uniform persistence for an
  infection load-structured si model with exponential growth velocity},
  Communications on Pure and Applied Analysis, 18 (2019), pp.~15--32,
  \url{https://doi.org/10.3934/cpaa.2019002}.

\bibitem{PerassoRichard19}
{\sc A.~Perasso and Q.~Richard}, {\em Implication of age-structure on the
  dynamics of {L}otka-{V}olterra equations}, Differential and Integral
  Equations, 32 (2019), pp.~91--120.

\bibitem{Rothe85}
{\sc F.~Rothe}, {\em The periods of the {V}olterra-{L}otka system}, J. Reine
  Angew. Math., 355 (1985), pp.~129--138.

\bibitem{Saito2001}
{\sc Y.~Saito}, {\em Permanence and global stability for general
  {L}otka-{V}olterra predator-prey systems with distributed delays}, in
  Proceedings of the {T}hird {W}orld {C}ongress of {N}onlinear {A}nalysts,
  {P}art 9 ({C}atania, 2000), vol.~47, 2001, pp.~6157--6168,
  \url{https://doi.org/10.1016/S0362-546X(01)00680-0}.

\bibitem{Saito99}
{\sc Y.~Saito, T.~Hara, and W.~Ma}, {\em Necessary and sufficient conditions
  for permanence and global stability of a {L}otka-{V}olterra system with two
  delays}, J. Math. Anal. Appl., 236 (1999), pp.~534--556,
  \url{https://doi.org/10.1006/jmaa.1999.6464}.

\bibitem{SharpeLotka11}
{\sc F.~R. Sharpe and A.~J. Lotka}, {\em A problem in age-distribution},
  Philosophical Magazine series 6, 21 (1911), pp.~435--438.

\bibitem{ShiChen2017}
{\sc C.~Shi, X.~Chen, and Y.~Wang}, {\em Feedback control effect on the
  {L}otka-{V}olterra prey-predator system with discrete delays}, Adv.
  Difference Equ., 2017 (2017), p.~373,
  \url{https://doi.org/10.1186/s13662-017-1410-0}.

\bibitem{Smith2010}
{\sc H.~Smith}, {\em An Introduction to Delay Differential Equations with
  Applications to the Life Sciences}, vol.~57, Springer Science \& Business
  Media, 2010.

\bibitem{Vargas2012}
{\sc C.~Vargas-De-Le{\'o}n}, {\em Lyapunov functions for two-species
  cooperative systems}, Appl. Math. Comput., 219 (2012), pp.~2493--2497,
  \url{https://doi.org/10.1016/j.amc.2012.08.084}.

\bibitem{Vargas2015b}
{\sc C.~Vargas-De-Le{\'o}n}, {\em Global stability for multi-species
  {L}otka-{V}olterra cooperative systems: one hyper-connected
  mutualistic-species}, 8 (2015), pp.~1550039, 9,
  \url{https://doi.org/10.1142/S1793524515500394}.

\bibitem{Vargas2015}
{\sc C.~Vargas-De-Le{\'o}n}, {\em Lyapunov functionals for global stability of
  {L}otka--{V}olterra cooperative systems with discrete delays}, Abstraction \&
  Application, 12 (2015), pp.~42--50.

\bibitem{Volterra26}
{\sc V.~Volterra}, {\em Fluctuations in the abundance of a species considered
  mathematically}, Nature, 118 (1926), pp.~558--560.

\bibitem{Webb85}
{\sc G.~F. Webb}, {\em Theory of Nonlinear Age-Dependent Population Dynamics},
  Marcel Dekker, New York, 1985.

\bibitem{Yan2017}
{\sc S.~Yan and S.~Guo}, {\em Bifurcation phenomena in a {L}otka-{V}olterra
  model with cross-diffusion and delay effect}, Internat. J. Bifur. Chaos Appl.
  Sci. Engrg., 27 (2017), pp.~1750105, 24,
  \url{https://doi.org/10.1142/S021812741750105X}.

\bibitem{YanLi2006}
{\sc X.-P. Yan and W.-T. Li}, {\em Hopf bifurcation and global periodic
  solutions in a delayed predator-prey system}, Appl. Math. Comput., 177
  (2006), pp.~427--445, \url{https://doi.org/10.1016/j.amc.2005.11.020}.

\end{thebibliography}

\end{document}